\newtheorem{theorem}{Theorem}[section]
\newtheorem{lemma}[theorem]{Lemma}
\newtheorem{corollary}[theorem]{Corollary}
\theoremstyle{definition}
\newtheorem{definition}[theorem]{Definition}
\newtheorem{example}[theorem]{Example}
\theoremstyle{remark}
\newtheorem{remark}[theorem]{Remark}
\newcommand{\e}{\mathrm{e}}
\numberwithin{equation}{section}
\begin{document}

% \title[short text for running head]{full title}
%\title[Adaptive DG methods for fractional PDEs]{Adaptive discontinuous Galerkin methods for the tempered fractional convection-diffusion equations}
\title[Adaptive DG methods for fractional PDEs]{Discontinuous Galerkin methods and their adaptivity for the tempered fractional (convection) diffusion equations}
%    Only \author and \address are required; other information is
%    optional.  Remove any unused author tags.

%    author one information
% \author[short version for running head]{name for top of paper}
\author{Xudong Wang}
\address{School of Mathematics and Statistics, Gansu Key Laboratory of Applied Mathematics and Complex Systems, Lanzhou University, Lanzhou 730000, P.R. China}
\email{xdwang14@lzu.edu.cn}
\thanks{}

%    author two information

\author{Weihua Deng}
\address{School of Mathematics and Statistics, Gansu Key Laboratory of Applied Mathematics and Complex Systems, Lanzhou University, Lanzhou 730000, P.R. China}
\email{dengwh@lzu.edu.cn}
\thanks{}

%    \subjclass is required.
\subjclass[2010]{Primary 26A33, 65M60, 65M12}

\date{}

\dedicatory{}

\keywords{Adaptive DG methods, Tempered fractional equations, Posteriori error estimate}
%    Abstract is required.
\begin{abstract}
This paper focuses on the adaptive discontinuous Galerkin (DG) methods for the tempered fractional (convection) diffusion equations. The DG schemes with interior penalty for the diffusion term and numerical flux for the convection term are used to solve the equations, and the detailed stability and convergence analyses are provided. Based on the derived posteriori error estimates, the local error indicator is designed. The theoretical results and the effectiveness of the adaptive DG methods are respectively verified and displayed by the extensive numerical experiments. The strategy of designing adaptive schemes presented in this paper works for the general PDEs with fractional operators.
\end{abstract}

\maketitle

%    Text of article.
%=========================================================================================
\section{Introduction}\label{section:1}
Fractional calculus \cite{Butzer1} is a popular mathematical tool for modeling anomalous diffusions \cite{Metzler1}, being ubiquitous in nature. Microscopically, anomalous diffusion can be described by continuous time random walk (CTRW), governed by the waiting time and jump length; generally the first moment of the waiting time and/or the second moment of the jump length diverge(s). Sometimes, it is better to temper the broad distribution(s) of the waiting time and/or the jump length \cite{Baenmera1,Deng4,Li2}, because of the boundedness of physical space or the finite lifespan of the biological particles or the slow transition of different diffusion types. Based on the tempered CTRW, the partial differential equations (PDEs) characterizing the evolution of the functional distribution of the trajectories of the particles are derived \cite{Wu1}, which reduce to the PDEs describing the distribution of the positions of the particles if taking the parameter $p$ over there as $0$, called tempered fractional PDEs; here, we discuss their (adaptive) discontinuous Galerkin (DG) methods.

%Nowadays, fractional calculus is a popular mathematical tool to deal with engineering and physical problems \cite{Butzer1,Metzler1}. It is widely applied in material and mechanics, anomalous diffusion, wave propagation, signal processing and systems identification, friction modeling, etc. Anomalous diffusion is one of the most ubiquitous phenomena, being observed in various fields of physics. It is usually characterized by the time dependence of the mean squared displacement, i.e., $\langle x^2(t)\rangle\sim t^\alpha$. When $0<\alpha<1$, it is called subdiffusion; $\alpha=1$ corresponds to normal diffusion; and $\alpha>1$ superdiffusion. Anomalous diffusion is described by the continuous time random walk (CTRW) microscopically, which is governed by the waiting time and jump length; its macroscopic limit in unbounded domain leads to the time-space fractional diffusion equation \cite{Carmi1}. The space fractional diffusion equation defined in unbounded domain characterizes the probability distribution of the positions of the particles of the L\'{e}vy flights, which has the divergent second order moment, in which the observed particles may behave arbitrarily large jumps. This may not be consistent with the physical processes, e.g., the biological particles moving in viscous cytoplasm just move in bounded physical space. Exponentially tempering the L\'{e}vy measure makes the moments of L\'{e}vy distributions finite, and the truncated L\'{e}vy flight makes the space tempered fractional diffusion equation.

There are already some progresses for numerically solving (tempered) fractional PDEs by variational methods \cite{Deng4, Ervin1, Huang1, LiXu, Mclean, Qiu1, Wang1, Xu1, Zayernouri}. Ervin and Roop \cite{Ervin1} firstly present the variational formulation for the fractional advection dispersion equation. The DG methods are particularly extended to fractional problems with their majority of characteristics \cite{Erik1, Shu1, Hesthaven1, Riv1, WangWang, Shu2}, naturally being formulated for any order of accuracy in any element, being flexible in choosing element sizes in any place, suitable for adaptivity, being local and easy to invert for mass matrix, leading to an explicit formulation for time dependent problems, etc.  Cockburn and Mustapha \cite{Cockburn} provide a hybridizable DG method for fractional diffusion problems; Mclean and Mustapha \cite{Mclean} discuss the superconvergence of the DG method for the fractional diffusion and wave equations; Xu and Hesthaven \cite{Xu1}, and  Wang et al \cite{Wang1}, respectively, consider DG and hybridized DG methods for the fractional convection-diffusion equations; Zayernouri and Karniadakis \cite{Zayernouri} design discontinuous spectral element methods for the time and space fractional differential equations. Du et al \cite{Du1} give a convergent adaptive finite element algorithm for nonlocal diffusion and peridynamic models. It seems that there are not works for digging out the potential advantages of DG methods in adaptivity for fractional problems, by deriving posteriori error estimates and providing the local error indicators.

The model we consider in this paper is the two dimensional space tempered fractional differential equation with absorbing boundary conditions \cite{Dybiec:06}, i.e.,
%In this paper, we consider the two-dimension time-dependent spatial tempered fractional convection-diffusion equation with absorbing boundaries \cite{Dybiec:06}.
\begin{equation}\label{equation:1.1}
\left\{ \begin{array}{ll}
\partial_t u+\textbf{b}\cdot\nabla u -\kappa_1\nabla_x^{\alpha,\lambda}u-\kappa_2\nabla_y^{\beta,\lambda}u=f,
& (\textbf{x},t)\in\Omega\times J, \\
u(\textbf{x},0)=u_0(\textbf{x}), &  \textbf{x}\in\Omega, \\
u(\textbf{x},t)=0,   &  (\textbf{x},t)\in\mathbb{R}^2\backslash\Omega\times J,
\end{array} \right.
\end{equation}
where $\alpha,\beta\in(0,2)\backslash\{1\}$, $\lambda>0$, and $\kappa_1,\kappa_2>0$
in the domain $\Omega=[a,b]\times[c,d]$ and $J=[0,T]$; the function $f\in L^2(J;L^2(\Omega))$ is a source term; the convection coefficient $\textbf{b}$ is a given divergence-free velocity field, i.e., $\nabla\cdot\mathbf{b}=0$, being supposed to satisfy $\textbf{b}\in L^\infty(J;W^{1,\infty}(\Omega)^2)$, and the initial function $u_0\in L^2(\Omega)$. As for the discussion of the adaptivity of the fractional problems, we start from the steady state version of (\ref{equation:1.1}) with $\textbf{b}=0$. The first part of the paper focuses on designing the DG scheme of Eq. (\ref{equation:1.1}) with genuinely unstructured grids, and offering explicit theoretical analyses. Being different from \cite{Qiu1}, which constructs the LDG scheme by rewriting the fractional equation as a first order system, we adopt the primal DG methods, namely interior penalty (IP) method, still keeping the advantages over the classical continuous Galerkin method in facilitating $hp$-adaptivity and yielding block diagonal mass matrices in time-dependent problems. Generally, the non-ignorable drawback of the IP method is to specify sufficient large penalty parameter for guaranteeing numerical stability, which degrades the performance of the iterative solver of the linear system \cite{Shahbazi1}. Fortunately, for the (tempered) fractional equations, this weak point disappears, since the schemes are stable for any value of the penalty parameter, say, simply taking as $1$. For the convection term, the upwind flux \cite{Shu1,Riv1} is used for ensuring numerical stability.

%Here, we shall design a stable and accurate scheme for Eq (\ref{equation:1.1}) with genuinely unstructured grids in two dimension, and offer explicit theoretical analysis. Different from \cite{Qiu1}, which construct the LDG methods to rewrite the fractional diffusion system into a larger, degenerate, first-order system, we employ the primal DG methods, namely interior penalty (IP). The IP method also has advantages over the classical continuous Galerkin method in facilitating $hp$-adaptivity and yielding block diagonal mass matrices in time-dependent problems. However, the non-ignorable drawback is to specify the penalty parameter large enough to guarantee the stability, which degrades the performance of the iterative solver of the linear system \cite{Shahbazi1}.  But to the (tempered) fractional equations, the drawback disappears. The scheme is stable whatever the penalty parameter is. And without loss of generality, we let it be $1$. On the other hand, the convection term is approximated by the fluxes, which should be selected carefully to guarantee the stability. Like \cite{Riv1,Shu1}, we adopt an upwind flux.

Mesh adaption is the basic technique of balancing the computational cost and accuracy, which introduces extra points near the singularities or the high gradient part of the solution to be computed. The key ingredient of adaptivity is a posteriori error estimators \cite{Erik1, Ming1}, which are computable quantities depending on the computed solution and data. We derive a posteriori error estimators for fractional operators and obtain the local error indicators, being used to  dynamically and locally refine or coarsen meshes. To show the effectiveness of the local error indicators, we adaptively solve the fractional differential equations with singularities, including both the steady state and time dependent ones. For the steady state equations, two schemes are presented. One is based on energy norm, while another one is based on dual weighted residual. It is observed that the latter performs better than the former. For the time dependent one, both the space mesh and time-step size are adapted simultaneously.

The outline of this paper is as follows. Section 2 is composed of five subsections. The first subsection reviews the definitions and properties of tempered fractional calculus. The notations and the variational formulations of DG schemes are, respectively, proposed in the second and third subsections. In the fourth subsection, we perform the stability analysis and error estimates for the two dimensional tempered fractional convection-diffusion equations. The numerical results are provided in the last subsection. Section 3 is discussing adaptivity, composed of two parts, which are, respectively, for the stationary equation and the evolution equation. We conclude the paper with some remarks in the last section.

\section{DG for tempered fractional convection-diffusion equation}\label{section:2}

In this section, we design the DG scheme for the tempered fractional convection-diffusion equation (\ref{equation:1.1}), provide the detailed stability and convergence proof, and numerically verify the theoretical results.

\subsection{Tempered fractional operators}\label{subsection:2.1}
We firstly introduce some preliminary definitions of tempered fractional calculus \cite{Deng4,Li2}.

\begin{definition}\label{define:2.1.1}
For any $\alpha>0, \lambda>0$, the left and right tempered Riemann-Liouville fractional integrals of function $u(x)$
defined on $\mathbb{R}$ are given by
\begin{equation}\label{equation:2.1.1}
_{-\infty}I_x^{\alpha,\lambda}u(x)={\e^{-\lambda x}} _{-\infty}I_x^{\alpha}[\e^{\lambda x}u(x)]
=\frac{1}{\Gamma(\alpha)}\int_{-\infty}^x(x-\xi)^{\alpha-1}\e^{-\lambda(x-\xi)}u(\xi)d\xi,
\end{equation}
and
\begin{equation}\label{equation:2.1.2}
_xI_{\infty}^{\alpha,\lambda}u(x)={\e^{\lambda x}} _xI_{\infty}^{\alpha}[\e^{-\lambda x}u(x)]
=\frac{1}{\Gamma(\alpha)}\int_x^{\infty}(\xi-x)^{\alpha-1}\e^{-\lambda(\xi-x)}u(\xi)d\xi.
\end{equation}
\end{definition}

\begin{definition}\label{define:2.1.2}
For any $\alpha>0, n-1<\alpha<n, n\in \mathbb{N}^+, \lambda>0,$ the left and right tempered Riemann-Liouville fractional derivatives of function $u(x)$ defined on $\mathbb{R}$ are given as
\begin{equation}\label{equation:2.1.3}
_{-\infty}D_x^{\alpha,\lambda}u(x)={\e^{-\lambda x}} _{-\infty}D_x^{\alpha}[\e^{\lambda x}u(x)]
={(\lambda+D)^n}_{-\infty}I_x^{n-\alpha,\lambda}u(x),
\end{equation}
and
\begin{equation}\label{equation:2.1.4}
_xD_{\infty}^{\alpha,\lambda}u(x)={\e^{\lambda x}} _xD_{\infty}^{\alpha}[\e^{-\lambda x}u(x)]
={(\lambda-D)^n}_xI_{\infty}^{n-\alpha,\lambda}u(x).
\end{equation}
\end{definition}

\begin{definition}\label{define:2.1.2,+}
For any $\alpha>0, n-1<\alpha<n, n\in \mathbb{N}^+, \lambda>0,$ the left and right tempered Caputo fractional derivatives of function $u(x)$ defined on $\mathbb{R}$ are described by
\begin{equation}\label{equation:2.1.3,+}
{ {}_-\hspace{0cm}_\infty^CD_x^{\alpha, \lambda} } u(x)
=\e^{-\lambda x}{{}_-\hspace{0cm}_\infty^CD_x^\alpha}[\e^{\lambda x}u(x)]
={}_{-\infty}I_x^{n-\alpha,\lambda}{(\lambda+D)^n}u(x),
\end{equation}
and
\begin{equation}\label{equation:2.1.4,+}
{_x^C}D_{\infty}^{\alpha,\lambda}u(x)=\e^{\lambda x}{_x^C}D_{\infty}^{\alpha}[\e^{-\lambda x}u(x)]
={_x}I_{\infty}^{n-\alpha,\lambda}{(\lambda-D)^n}u(x).
\end{equation}
\end{definition}

\begin{definition}\label{define:2.1.3}
The Riesz tempered fractional derivatives $\nabla_x^{\alpha,\lambda}$ and $\nabla_y^{\beta,\lambda}$ with $\alpha,\beta\in(0,2)\backslash\{1\}$, $\lambda>0$, are respectively defined as
\begin{equation}\label{equation:2.1.5}
\nabla_x^{\alpha,\lambda}u(\textbf{x},t)=-\kappa_\alpha[_{-\infty}\nabla_x^{\alpha,\lambda}+{}_x\nabla_{\infty}^{\alpha,\lambda}]
u(\textbf{x},t),
\end{equation}
and
\begin{equation}\label{equation:2.1.6}
\nabla_y^{\beta,\lambda}u(\textbf{x},t)=-\kappa_\beta[_{-\infty}\nabla_y^{\beta,\lambda}+{}_y\nabla_{\infty}^{\beta,\lambda}]
u(\textbf{x},t),
\end{equation}
where $\kappa_\alpha=\frac{1}{2\cos(\alpha\pi/2)}$ and $\kappa_\beta=\frac{1}{2\cos(\beta\pi/2)}$. The left and right tempered Riemann-Liouville fractional derivatives are defined by
\begin{equation}\label{equation:2.1.7}
_{-\infty}\nabla_x^{\alpha,\lambda}u(\textbf{x},t)= {_{-\infty}D_x^{\alpha,\lambda}}u(\textbf{x},t)-\lambda^\alpha u(\textbf{x},t)~~ {\rm for}~~ \alpha \in (0,1),
\end{equation}
\begin{equation}\label{equation:2.1.7.1}
_{-\infty}\nabla_x^{\alpha,\lambda}u(\textbf{x},t)= {_{-\infty}D_x^{\alpha,\lambda}}u(\textbf{x},t)-\lambda^\alpha u(\textbf{x},t)
-\alpha\lambda^{\alpha-1}\frac{\partial u(\textbf{x},t)}{\partial x}~~ {\rm for}~~ \alpha \in (1,2),
\end{equation}
and
\begin{equation}\label{equation:2.1.8}
_x\nabla_{\infty}^{\alpha,\lambda}u(\textbf{x},t)= {_xD_{\infty}^{\alpha,\lambda}}u(\textbf{x},t)-\lambda^\alpha u(\textbf{x},t)~~ {\rm for}~~ \alpha \in (0,1),
\end{equation}
\begin{equation}\label{equation:2.1.8.1}
_x\nabla_{\infty}^{\alpha,\lambda}u(\textbf{x},t)= {_xD_{\infty}^{\alpha,\lambda}}u(\textbf{x},t)-\lambda^\alpha u(\textbf{x},t)
+\alpha\lambda^{\alpha-1}\frac{\partial u(\textbf{x},t)}{\partial x}~~ {\rm for}~~ \alpha \in (1,2),
\end{equation}
respectively. The definition of $\nabla_y^{\beta,\lambda}$ is similar.
\end{definition}

\begin{remark}\label{remark:2.1.1}
For $\lambda=0$, the tempered Riemann-Liouville fractional derivative reduces to the Riemann-Liouville fractional derivative and the tempered Riesz fractional derivative to the Riesz fractional derivative.
%Note that fractional operator is memory. It reserves the latest information but losses the oldest information. Tempering the fractional operator by an truncated term $\e^{-\lambda x}$ is also memory, but weakens the quality of memory.
The fractional substantial derivative \cite{Chen1,Huang1} has similar definition to $_{-\infty}D_x^{\alpha,\lambda}$, but their physical backgrounds are totally different.
 % More general case of (\ref{equation:2.1.3}) and (\ref{equation:2.1.4}) as $_{-\infty}D_x^{\alpha,\lambda}$ is also named as fractional substantial derivative \cite{Chen1,Huang1}, which describes the functionals distribution of waiting time and jump length of the particle paths in anomalous diffusion, being as a totally different background from the tempered ones.
\end{remark}

Because of the absorbing boundary condition of Eq. (\ref{equation:1.1}), and Definitions  \ref{define:2.1.1} and \ref{define:2.1.2}, we have $_{-\infty}I_x^{\alpha,\lambda}u(x)={}_aI_x^{\alpha,\lambda}u(x)$ and $_xI_{\infty}^{\alpha,\lambda}u(x)={}_xI_b^{\alpha,\lambda}u(x)$; it is similar for the tempered fractional derivative. For convenience, we use the latter notations in the following.
First we introduce some properties of the tempered fractional calculus and the tempered fractional spaces. %For the convenience of discussion, the tempered fractional integral ${_a}I_x^{\alpha,\lambda}$ is sometimes re-denoted as ${_a}D_x^{-\alpha,\lambda}$ in the following context.
Suppose that the function $f(x)$ is $(m-1)$-times continuously differentiable in the interval $[a,b]$ and that its $m$-times derivative is integrable in $[a,b]$. Then for any $m-1<\mu,\nu<m, \lambda>0$,
\begin{equation}\label{2.2.1.+}
  {_a}D_x^{\mu,\lambda}f(x)={_a^C}D_x^{\mu,\lambda}f(x)+
  \sum_{j=0}^{m-1}D^{j,\lambda}f(x)|_{x=a}\frac{(x-a)^{j-\mu}\e^{-\lambda(x-a)}}{\Gamma(j-\mu+1)},
\end{equation}
\begin{equation}\label{2.2.2.+}
  {_a}I_x^{\mu,\lambda}[{_a}D_x^{\mu,\lambda}f(x)]=f(x)-
  \sum_{j=1}^m{_a}D_x^{\mu-j,\lambda}f(x)|_{x=a}\frac{(x-a)^{\mu-j}\e^{-\lambda(x-a)}}{\Gamma(\mu-j+1)},
\end{equation}
\begin{equation}\label{2.2.3.+}
  {_a}D_x^{\nu,\lambda}[{_a}D_x^{\mu,\lambda}f(x)]={_a}D_x^{\nu+\mu,\lambda}f(x)-
  \sum_{j=1}^m{_a}D_x^{\mu-j,\lambda}f(x)|_{x=a}\frac{(x-a)^{-\nu-j}\e^{-\lambda(x-a)}}{\Gamma(-\nu-j+1)},
\end{equation}
where $D^{j,\lambda}$ denotes $(\lambda+D)^j$ when $j$ is an integer. It can be noted that the difference between tempered Riemann-Liouville fractional derivatives and tempered Caputo fractional derivatives is the sum of the values of the derivatives of $f(x)$ at boundary. So, we state the following condition.

\noindent\textbf{Condition A:} For $m-1<\mu<m$, the function $f(x)$ satisfies \[f^{(j)}(a)=f^{(j)}(b)=0 \quad {\rm for~~} j=0,1,\cdots,m-1.\]

From the discussion of (\cite{Podlubny1}, p. 75-77), we know that if $f(x)$ is sufficiently smooth, then the conditions
\begin{equation}
  {_a}D_x^{\mu-j,\lambda}f(x)|_{x=a}={_x}D_b^{\mu-j,\lambda}f(x)|_{x=b}=0 \qquad {\rm for~~} j=1,2,\cdots,m,
\end{equation}
are equivalent to Condition A.
Therefore, under Condition A, the formulae (\ref{2.2.1.+}), (\ref{2.2.2.+}) and (\ref{2.2.3.+}) become
\begin{equation}\label{2.2.4.+}
  {_a}D_x^{\mu,\lambda}f(x)={_a^C}D_x^{\mu,\lambda}f(x),
\end{equation}
\begin{equation}\label{2.2.5.+}
  {_a}I_x^{\mu,\lambda}[{_a}D_x^{\mu,\lambda}f(x)]=f(x),
\end{equation}
\begin{equation}\label{2.2.6.+}
  {_a}D_x^{\nu,\lambda}[{_a}D_x^{\mu,\lambda}f(x)]={_a}D_x^{\nu+\mu,\lambda}f(x).
\end{equation}

\begin{lemma}[\rm Adjoint property]\label{lemma:2.2.1}
For any $\alpha>0, \lambda>0,$ the left and right tempered Riemann-Liouville fractional integral operators are adjoint for any functions $u(x),v(x)\in L^2([a,b]),$ i.e.,
\begin{equation}\label{equation:2.2.4}
\int_a^b{_aI_x^{\alpha,\lambda}}u(x)\cdot v(x)dx=\int_a^b u(x)\cdot {_xI_b^{\alpha,\lambda}v(x)}dx.\nonumber
\end{equation}
\end{lemma}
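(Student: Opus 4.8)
The plan is to reduce the statement to the classical adjointness of the (untempered) Riemann–Liouville fractional integrals by pulling the exponential weights through the integrals, and then to establish the untempered identity directly by Fubini's theorem. First I would write out, using Definition \ref{define:2.1.1}, the left side as
\[
\int_a^b {_aI_x^{\alpha,\lambda}}u(x)\,v(x)\,dx
=\int_a^b \e^{-\lambda x}\,{_aI_x^{\alpha}}\!\big[\e^{\lambda \cdot}u\big](x)\,v(x)\,dx
=\int_a^b {_aI_x^{\alpha}}\!\big[\e^{\lambda \cdot}u\big](x)\cdot\big(\e^{-\lambda x}v(x)\big)\,dx,
\]
so that the weights have been redistributed onto $u$ and $v$ as $\tilde u(x)=\e^{\lambda x}u(x)$ and $\tilde v(x)=\e^{-\lambda x}v(x)$, both still in $L^2([a,b])$ since $\e^{\pm\lambda x}$ is bounded on the finite interval $[a,b]$.

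Next I would prove the untempered adjointness $\int_a^b {_aI_x^{\alpha}}\tilde u(x)\,\tilde v(x)\,dx=\int_a^b \tilde u(x)\,{_xI_b^{\alpha}}\tilde v(x)\,dx$. Substituting the definition of ${_aI_x^{\alpha}}$, the left side equals
\[
\frac{1}{\Gamma(\alpha)}\int_a^b\!\!\int_a^x (x-\xi)^{\alpha-1}\tilde u(\xi)\,\tilde v(x)\,d\xi\,dx,
\]
which after interchanging the order of integration over the triangular region $\{a\le \xi\le x\le b\}$ becomes
\[
\frac{1}{\Gamma(\alpha)}\int_a^b \tilde u(\xi)\left(\int_\xi^b (x-\xi)^{\alpha-1}\tilde v(x)\,dx\right)d\xi
=\int_a^b \tilde u(\xi)\,{_\xi I_b^{\alpha}}\tilde v(\xi)\,d\xi.
\]
The use of Fubini's theorem is justified because the kernel $(x-\xi)^{\alpha-1}$ is integrable near $\xi=x$ (as $\alpha>0$) and, together with $\tilde u,\tilde v\in L^2$, Cauchy–Schwarz gives an absolutely convergent double integral; this integrability check is the one genuinely technical point, though it is routine. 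Finally, I would undo the substitution: since $\e^{-\lambda x}{_xI_b^{\alpha}}[\e^{-\lambda\cdot}v]$... more precisely, recognizing ${_xI_b^{\alpha}}\tilde v(x)=\e^{\lambda x}\big(\e^{-\lambda x}{_xI_b^{\alpha}}[\e^{-\lambda\cdot}v](x)\big)$ wait — rather, I would directly identify $\tilde u(\xi)\,{_\xi I_b^{\alpha}}\tilde v(\xi)=\e^{\lambda\xi}u(\xi)\cdot{_\xi I_b^{\alpha}}[\e^{-\lambda\cdot}v](\xi)=u(\xi)\cdot\e^{\lambda\xi}{_\xi I_b^{\alpha}}[\e^{-\lambda\cdot}v](\xi)=u(\xi)\cdot{_\xi I_b^{\alpha,\lambda}}v(\xi)$, where the last equality is exactly formula \eqref{equation:2.1.2}. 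This yields the claimed identity.

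I do not expect any serious obstacle here; the only thing to be careful about is the Fubini step and keeping track of which exponential weight attaches to which factor. An alternative, essentially equivalent route would be to verify the identity first for smooth compactly supported $u,v$ by the same Fubini argument and then extend by density, using the boundedness of ${_aI_x^{\alpha,\lambda}}:L^2([a,b])\to L^2([a,b])$; but since $[a,b]$ is bounded and $\alpha>0$, the direct computation is already rigorous and is the cleaner presentation.
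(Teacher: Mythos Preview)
The paper states this lemma without proof (it is a standard identity, presumably taken from the cited references on tempered fractional calculus), so there is no argument in the paper to compare against. Your proposal is correct and is the natural way to prove the result: absorb the exponential weights into $u$ and $v$, reduce to the untempered adjointness $\int_a^b{_aI_x^{\alpha}}\tilde u\cdot\tilde v=\int_a^b\tilde u\cdot{_xI_b^{\alpha}}\tilde v$, verify that by Fubini on the triangle $\{a\le\xi\le x\le b\}$, and then re-identify the result via \eqref{equation:2.1.2}. The Fubini justification you flag is indeed routine: on the bounded interval the kernel $t_+^{\alpha-1}/\Gamma(\alpha)$ lies in $L^1$, so Young's inequality gives ${_aI_x^{\alpha}}:L^2\to L^2$ bounded, and applying this to $|\tilde u|$ together with Cauchy--Schwarz in the outer variable shows the double integral of the absolute value is finite. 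You may want to clean up the ``wait --- rather'' aside in a final write-up, but the identification $\tilde u(\xi)\,{_\xi I_b^{\alpha}}\tilde v(\xi)=u(\xi)\,{_\xi I_b^{\alpha,\lambda}}v(\xi)$ you arrive at is exactly right.
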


\begin{lemma}[\rm Adjoint property]\label{lemma:2.2.2}
For any $\alpha>0, \lambda>0,$ the left and right tempered Riemann-Liouville fractional derivative operators are adjoint for functions $u(x)$ and $v(x)$ under Condition A, i.e.,
\begin{equation}\label{equation:2.2.5}
\int_a^b{_aD_x^{\alpha,\lambda}}u(x)\cdot v(x)dx=\int_a^b u(x)\cdot {_xD_b^{\alpha,\lambda}v(x)}dx.\nonumber
\end{equation}
\end{lemma}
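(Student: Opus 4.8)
The plan is to reduce the adjointness of the fractional \emph{derivative} operators to that of the fractional \emph{integral} operators already established in Lemma~\ref{lemma:2.2.1}, at the cost of $n$ ordinary integrations by parts whose boundary contributions are annihilated by Condition~A. Throughout write $n-1<\alpha<n$, and recall that the operator written $(\lambda+D)^n$ below is what the formulas preceding the lemma denote $D^{n,\lambda}$.

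\emph{Step 1: unload the derivative onto the data.} Under Condition~A, identity~(\ref{2.2.4.+}) gives ${_aD_x^{\alpha,\lambda}}u={_a^CD_x^{\alpha,\lambda}}u$, and Definition~\ref{define:2.1.2,+} identifies the right-hand side with ${_aI_x^{n-\alpha,\lambda}}\big[(\lambda+D)^n u\big]$. Therefore
\begin{equation*}
\int_a^b {_aD_x^{\alpha,\lambda}}u(x)\,v(x)\,dx
=\int_a^b {_aI_x^{n-\alpha,\lambda}}\big[(\lambda+D)^n u(x)\big]\cdot v(x)\,dx
=\int_a^b (\lambda+D)^n u(x)\cdot {_xI_b^{n-\alpha,\lambda}}v(x)\,dx,
\end{equation*}
where the last equality is Lemma~\ref{lemma:2.2.1} applied with $(\lambda+D)^n u$ in the role of $u$.

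\emph{Step 2: move the operator back, onto $v$.} Put $g:={_xI_b^{n-\alpha,\lambda}}v$. The one-line identity $\int_a^b (\lambda+D)w\cdot z\,dx=\int_a^b w\cdot(\lambda-D)z\,dx+[wz]_a^b$, applied $n$ times with $w$ successively equal to $(\lambda+D)^{n-1}u,(\lambda+D)^{n-2}u,\dots,u$, yields
\begin{equation*}
\int_a^b (\lambda+D)^n u\cdot g\,dx=\int_a^b u\cdot(\lambda-D)^n g\,dx+\sum_{i=0}^{n-1}\Big[(\lambda+D)^{\,n-1-i}u\cdot(\lambda-D)^{\,i}g\Big]_a^b.
\end{equation*}
Each boundary term carries the factor $(\lambda+D)^{\,n-1-i}u$ with exponent in $\{0,1,\dots,n-1\}$, i.e. a linear combination of $u,u',\dots,u^{(n-1)}$, all of which vanish at $x=a$ and $x=b$ by Condition~A; hence the whole sum drops out. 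Finally $(\lambda-D)^n g=(\lambda-D)^n\,{_xI_b^{n-\alpha,\lambda}}v={_xD_b^{\alpha,\lambda}}v$ is exactly Definition~\ref{define:2.1.2}, and the claimed identity follows.

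The only genuinely delicate point is the regularity needed to run Step~2: $u$ must be $(n-1)$-times continuously differentiable with $n$-th derivative integrable (indeed in $L^2$, so that Lemma~\ref{lemma:2.2.1} applies), which is the standing smoothness hypothesis of this subsection, and $g={_xI_b^{n-\alpha,\lambda}}v$ must admit $n$ derivatives for the integrations by parts, which it does because the tempered integral of order $n-\alpha\in(0,1)$ smooths $v$ and because Condition~A on $v$ removes the only obstruction to differentiating $g$ across the endpoint $b$. With that regularity in hand the argument is pure bookkeeping of boundary terms. (Alternatively one could start from the Riemann--Liouville form ${_aD_x^{\alpha,\lambda}}u=(\lambda+D)^n\,{_aI_x^{n-\alpha,\lambda}}u$ of Definition~\ref{define:2.1.2} and integrate by parts first, but then the boundary terms involve $(\lambda+D)^j\,{_aI_x^{n-\alpha,\lambda}}u$, and one must invoke~(\ref{2.2.2.+}) together with Condition~A to see that they vanish; the route above is cleaner.)
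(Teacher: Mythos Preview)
The paper states Lemma~\ref{lemma:2.2.2} without proof, so there is no argument in the text to compare yours against. Your route---rewrite ${_aD_x^{\alpha,\lambda}}u$ in Caputo form via~(\ref{2.2.4.+}), shift the tempered fractional integral across the pairing by Lemma~\ref{lemma:2.2.1}, and then pass $(\lambda+D)^n$ onto $g={_xI_b^{n-\alpha,\lambda}}v$ by $n$ ordinary integrations by parts whose boundary contributions are killed by Condition~A on $u$---is correct and is the standard way to obtain this identity. The only place worth one more sentence of care is the boundary sum in Step~2: you need the cofactors $(\lambda-D)^i g$ to be finite at $x=a,b$ so that the product with the vanishing $(\lambda+D)^{n-1-i}u$ is genuinely zero, not $0\cdot\infty$; this is guaranteed by the standing smoothness hypothesis on $v$ together with the regularizing effect of the order-$(n-\alpha)$ tempered integral, which you already flag.
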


\begin{lemma}\label{lemma:2.2.1.+}
  For $u(x)\in L^2(\mathbb{R})$ and $\mu,\lambda>0$, it holds that
  \begin{equation*}
    \mathscr{F}[_{-\infty}I_x^{\mu,\lambda}u(x)](\omega)=(\lambda+i\omega)^{-\mu}\hat{u}(\omega),
  \end{equation*}
  \begin{equation*}
    \mathscr{F}[_xI_{\infty}^{\mu,\lambda}u(x)](\omega)=(\lambda-i\omega)^{-\mu}\hat{u}(\omega).
  \end{equation*}
  If $u(x)\in C_0^\infty(\mathbb{R})$ further, then
    \begin{equation*}
    \mathscr{F}[_{-\infty}D_x^{\mu,\lambda}u(x)](\omega)=(\lambda+i\omega)^{\mu}\hat{u}(\omega),
  \end{equation*}
  \begin{equation*}
    \mathscr{F}[_xD_{\infty}^{\mu,\lambda}u(x)](\omega)=(\lambda-i\omega)^{\mu}\hat{u}(\omega),
  \end{equation*}
  where $\hat{u}(\omega)=\int_{-\infty}^\infty ~ \e^{-i\omega x}u(x)d x$.
\end{lemma}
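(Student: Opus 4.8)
The plan is to interpret each tempered Riemann--Liouville integral as a convolution against an explicit $L^1$ kernel and then apply the convolution theorem; the derivative formulas will then follow from Definition~\ref{define:2.1.2} together with the elementary rule $\mathscr{F}[\partial_x v](\omega)=i\omega\,\hat v(\omega)$. Concretely, for $\mu,\lambda>0$ I would set
\begin{equation*}
k_\mu^\lambda(x)=\frac{1}{\Gamma(\mu)}\,x^{\mu-1}\e^{-\lambda x}\ \text{ for }x>0,\qquad k_\mu^\lambda(x)=0\ \text{ for }x\le 0 ,
\end{equation*}
so that Definition~\ref{define:2.1.1} says precisely ${}_{-\infty}I_x^{\mu,\lambda}u=k_\mu^\lambda * u$, while the substitution $\xi\mapsto-\xi$ in the kernel gives ${}_xI_\infty^{\mu,\lambda}u=\check k_\mu^\lambda * u$ with $\check k_\mu^\lambda(x)=k_\mu^\lambda(-x)$. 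Since $\|k_\mu^\lambda\|_{L^1(\mathbb{R})}=\lambda^{-\mu}<\infty$, the kernel lies in $L^1(\mathbb{R})$; hence for $u\in L^2(\mathbb{R})$ Young's inequality gives $k_\mu^\lambda * u\in L^2(\mathbb{R})$ and the convolution theorem $\mathscr{F}[k_\mu^\lambda * u]=\mathscr{F}[k_\mu^\lambda]\cdot\hat u$ is valid (as an identity of $L^2$ functions).

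Next I would compute the transform of the kernel directly,
\begin{equation*}
\mathscr{F}[k_\mu^\lambda](\omega)=\frac{1}{\Gamma(\mu)}\int_0^\infty x^{\mu-1}\e^{-(\lambda+i\omega)x}\,dx=(\lambda+i\omega)^{-\mu},
\end{equation*}
the last equality being the Euler Gamma integral $\int_0^\infty x^{\mu-1}\e^{-sx}\,dx=\Gamma(\mu)s^{-\mu}$, which is legitimate because $\operatorname{Re}(\lambda+i\omega)=\lambda>0$; this same positivity fixes the principal branch of $s^{-\mu}$, so the formula is unambiguous. Combining with the convolution theorem yields the first identity, and replacing $k_\mu^\lambda$ by $\check k_\mu^\lambda$, whose transform is $\mathscr{F}[k_\mu^\lambda](-\omega)=(\lambda-i\omega)^{-\mu}$, yields the second.

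Finally, for $u\in C_0^\infty(\mathbb{R})$ I write $n=\lceil\mu\rceil$, so that Definition~\ref{define:2.1.2} reads ${}_{-\infty}D_x^{\mu,\lambda}u=(\lambda+D)^n\,{}_{-\infty}I_x^{n-\mu,\lambda}u$. Because $u$ is smooth with compact support, $v:={}_{-\infty}I_x^{n-\mu,\lambda}u=k_{n-\mu}^\lambda * u$ is $C^\infty$, vanishes for all sufficiently negative $x$, and together with all its derivatives decays exponentially as $x\to+\infty$; thus $v$ and its first $n$ derivatives lie in $L^1\cap L^2$ and $\mathscr{F}[\partial_x v]=i\omega\,\hat v$. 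Iterating gives $\mathscr{F}[(\lambda+D)^n v](\omega)=(\lambda+i\omega)^n\hat v(\omega)$, and since $\hat v(\omega)=(\lambda+i\omega)^{-(n-\mu)}\hat u(\omega)$ by the first part, we obtain $\mathscr{F}[{}_{-\infty}D_x^{\mu,\lambda}u](\omega)=(\lambda+i\omega)^{\mu}\hat u(\omega)$; the right-sided statement is identical with $(\lambda+D)$ replaced by $(\lambda-D)$ and $\omega$ by $-\omega$. The only point that genuinely needs care is that the multiplicative identity $(\lambda+i\omega)^{n}(\lambda+i\omega)^{-(n-\mu)}=(\lambda+i\omega)^{\mu}$ be read with the principal branch throughout, which is legitimate precisely because $\lambda+i\omega$ remains in the open right half-plane for every $\omega\in\mathbb{R}$; this, along with the $L^1$-versus-$L^2$ bookkeeping for the convolution theorem, is the main (and essentially only) subtlety in the argument.
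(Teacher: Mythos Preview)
Your argument is correct and is the standard route to these identities: write the tempered integral as convolution with the $L^1$ kernel $k_\mu^\lambda$, compute $\widehat{k_\mu^\lambda}$ via the Euler Gamma integral, and then obtain the derivative formulas from Definition~\ref{define:2.1.2} using $\mathscr{F}[(\lambda+D)v]=(\lambda+i\omega)\hat v$. The care you take with the principal branch and with the decay of $v=k_{n-\mu}^\lambda * u$ for $u\in C_0^\infty(\mathbb{R})$ is appropriate and sufficient.

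Note, however, that the paper does not actually prove Lemma~\ref{lemma:2.2.1.+}; it is simply stated as a known result (the surrounding references, e.g.\ \cite{Deng4,Li2}, cover this material) and the text immediately proceeds to the discussion of tempered fractional Sobolev spaces. So there is no ``paper's own proof'' to compare against; your write-up supplies exactly the expected justification.
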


Next, we discuss the tempered fractional Sobolev space. Denote $\Omega=[a,b]\times[c,d]$ as a finite domain; $A\lesssim B$ means that $A$ can be bounded by a multiple of $B$, which is independent of $B$; and $A\sim B$ means that $A\lesssim B\lesssim A$. The definitions and properties for the norms of the left and right tempered fractional derivatives are similar, so we mainly focus on the left one.

\begin{lemma}[\cite{Deng4}]
  Let $p,q>0$ and $\alpha>0$. Then
  \begin{equation}\label{equation:2.3.1.+}
    (p+q)^\alpha\thicksim p^\alpha+q^\alpha.
  \end{equation}
  More specifically,
  \begin{equation}
    2^{\alpha-1}(p^\alpha+q^\alpha)\leq(p+q)^\alpha\leq(p^\alpha+q^\alpha)\quad {\rm for }\quad 0<\alpha\leq1\nonumber
  \end{equation}
  and
  \begin{equation}
    (p^\alpha+q^\alpha)\leq(p+q)^\alpha\leq2^{\alpha-1}(p^\alpha+q^\alpha)\quad {\rm for }\quad \alpha>1.\nonumber
  \end{equation}
\end{lemma}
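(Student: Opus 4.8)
The claim $(p+q)^\alpha \sim p^\alpha + q^\alpha$ for $p,q>0$, $\alpha>0$, with the explicit two-sided constants $2^{\alpha-1}$, is essentially a statement about the power function on $[0,1]$, so the plan is to normalize and reduce to a one-variable inequality. Assume without loss of generality that $p+q=1$ by homogeneity of degree $\alpha$: dividing both sides by $(p+q)^\alpha$ and writing $t=p/(p+q)\in(0,1)$, the assertion becomes $2^{\alpha-1}\bigl(t^\alpha+(1-t)^\alpha\bigr)\le 1\le t^\alpha+(1-t)^\alpha$ for $0<\alpha\le 1$, with the inequalities reversed for $\alpha>1$. Thus it suffices to analyze the function $g(t)=t^\alpha+(1-t)^\alpha$ on $[0,1]$.

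The key steps are then: (i) show $g$ attains its extremum at $t=1/2$, giving the value $g(1/2)=2\cdot(1/2)^\alpha = 2^{1-\alpha}$, which accounts for the constant $2^{\alpha-1}$ (its reciprocal); (ii) evaluate the endpoint behavior $g(0)=g(1)=1$, which accounts for the constant $1$; (iii) determine the direction of the inequality by checking convexity/concavity of $t\mapsto t^\alpha$. For $0<\alpha\le 1$ the map $t\mapsto t^\alpha$ is concave, so $g$ is concave on $[0,1]$, hence $g$ lies above the chord joining its endpoints — giving $g(t)\ge 1$ — and below its tangent / attains its maximum at the symmetric point $t=1/2$ — giving $g(t)\le 2^{1-\alpha}$. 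For $\alpha>1$ the map is convex, $g$ is convex, and both inequalities flip: $g(t)\le 1$ on the chord side and $g(t)\ge 2^{1-\alpha}$ at the minimum $t=1/2$. Multiplying through by $(p+q)^\alpha$ restores the homogeneous form and yields exactly the stated bounds, and the equivalence $(p+q)^\alpha\sim p^\alpha+q^\alpha$ follows immediately since all constants are positive and depend only on $\alpha$.

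The only mild subtlety — and the place I would be most careful — is step (i): verifying that $t=1/2$ is genuinely the global extremum of $g$ on $[0,1]$ and not merely a critical point, and getting the direction right in each regime. This is handled cleanly by computing $g'(t)=\alpha\bigl(t^{\alpha-1}-(1-t)^{\alpha-1}\bigr)$, noting $g'(1/2)=0$, and using monotonicity of $s\mapsto s^{\alpha-1}$ (decreasing for $\alpha<1$, increasing for $\alpha>1$) together with the symmetry $g(t)=g(1-t)$ to conclude $t=1/2$ is the unique interior critical point and is a max when $\alpha<1$, a min when $\alpha>1$. The boundary case $\alpha=1$ is trivial since then $g\equiv 1$ and $2^{\alpha-1}=1$, so both inequalities are equalities. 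No deep machinery is needed; the entire argument is elementary calculus once the homogeneous reduction is in place.
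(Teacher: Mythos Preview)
Your proof is correct. The paper does not actually prove this lemma: it is quoted from \cite{Deng4} and stated without argument, so there is no ``paper's own proof'' to compare against. Your homogeneous reduction to $g(t)=t^\alpha+(1-t)^\alpha$ on $[0,1]$, followed by the concavity/convexity dichotomy for $t\mapsto t^\alpha$ according to whether $\alpha\le 1$ or $\alpha>1$, is the standard elementary route and is fully rigorous; the endpoint values $g(0)=g(1)=1$ and the symmetric critical value $g(1/2)=2^{1-\alpha}$ give exactly the stated constants, and your treatment of the boundary case $\alpha=1$ is fine.
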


Let $\alpha,\beta>0$ and $u(x,y)\in L^2(\mathbb{R}^2)$. Define the semi-norms
\begin{equation}
  |u(x,y)|_{J_{L,x}^{\alpha,\lambda}(\mathbb{R}^2)}:=\|_{-\infty}D_x^{\alpha,\lambda}u(x,y)\|_{L^2(\mathbb{R}^2)},\nonumber
\end{equation}
\begin{equation}
  |u(x,y)|_{J_{L,y}^{\beta,\lambda}(\mathbb{R}^2)}:=\|_{-\infty}D_y^{\beta,\lambda}u(x,y)\|_{L^2(\mathbb{R}^2)},\nonumber
\end{equation}
and norms
\[ \|u(x,y)\|_{J_{L,x}^{\alpha,\lambda}(\mathbb{R}^2)}:=
(\|u(x,y)\|_{L^2(\mathbb{R}^2)}^2+|u(x,y)|_{J_{L,x}^{\alpha,\lambda}(\mathbb{R}^2)}^2)^{1/2}, \]
\[ \|u(x,y)\|_{J_{L,y}^{\beta,\lambda}(\mathbb{R}^2)}:=
(\|u(x,y)\|_{L^2(\mathbb{R}^2)}^2+|u(x,y)|_{J_{L,y}^{\beta,\lambda}(\mathbb{R}^2)}^2)^{1/2}. \]

Next, we define the norms for the functions in $H^{\mu,\lambda}(\mathbb{R}^2)$ in terms of the Fourier transform, i.e.,
\begin{equation}
  |u(x,y)|_{H_x^{\mu,\lambda}(\mathbb{R}^2)}^2:=\int_{\mathbb{R}^2}(\lambda^2+\omega_1^2)^\mu|\hat{u}|^2d\omega,\nonumber
\end{equation}
\begin{equation}
  |u(x,y)|_{H_y^{\mu,\lambda}(\mathbb{R}^2)}^2:=\int_{\mathbb{R}^2}(\lambda^2+\omega_2^2)^\mu|\hat{u}|^2d\omega,\nonumber
\end{equation}
\begin{equation}
  |u(x,y)|_{H^{\mu,\lambda}(\mathbb{R}^2)}^2:=\int_{\mathbb{R}^2}(\lambda^2+|\omega|^2)^\mu|\hat{u}|^2d\omega,\nonumber
\end{equation}
where $\omega_1$ and $\omega_2$ are two components of $\omega$, and $\hat{u}$ is the Fourier transform of $u(x,y)$. By Plancherel's theorem, it can be proved that the spaces $J_{L,x}^{\mu,\lambda}$ and $H_x^{\mu,\lambda}$ are equal with equivalent semi-norms and norms while the spaces $J_{L,y}^{\mu,\lambda}$ and $H_y^{\mu,\lambda}$ are also equal with equivalent semi-norms and norms.

Noticing that $|\omega|^2=\omega_1^2+\omega_2^2$, together with (\ref{equation:2.3.1.+}), we have
\[(\lambda^2+|\omega|^2)^\mu\thicksim (\lambda^2+\omega_1^2)^\mu+(\lambda^2+\omega_2^2)^\mu.\]
Therefore,
\begin{equation}\label{Hnorm}
  |u(x,y)|_{H^{\mu,\lambda}(\mathbb{R}^2)}^2\thicksim |u(x,y)|_{H_x^{\mu,\lambda}(\mathbb{R}^2)}^2+|u(x,y)|_{H_y^{\mu,\lambda}(\mathbb{R}^2)}^2,
\end{equation}
which gives an equivalent form of the semi-norm $H^{\mu,\lambda}(\mathbb{R}^2)$, being useful in the error estimate. For any $\mu>0$, the Sobolev space $H^\mu(\mathbb{R}^2)$ is defined
%be the Sobolev space of order $\mu$ on $\mathbb{R}^2$
%and $H^\mu(\Omega)$ denote the space of restrictions of the functions from $H^\mu(\mathbb{R}^2)$. Similar to the case of %$H^{\mu,\lambda}(\mathbb{R}^2)$, we define the semi-norm
with the semi-norm
\begin{equation}
  |u|_{H^\mu(\mathbb{R}^2)}^2:=\int_{\mathbb{R}^2}|\omega|^{2\mu}|\hat{u}|^2d\omega,
\end{equation}
and the norm
\begin{equation}
  \|u\|_{H^\mu(\mathbb{R}^2)}^2:=\int_{\mathbb{R}^2}(1+|\omega|^{2\mu})|\hat{u}|^2d\omega.
\end{equation}
In the following, we use $H^\mu(\Omega)$ to denote the space of functions on $\Omega$ that admit extensions to $H^\mu(\mathbb{R}^2)$, equipped with the quotient norm $\|u\|_{H^\mu(\Omega)}:= \inf\limits_{\tilde{u}}\|\tilde{u}\|_{H^\mu(\mathbb{R}^2)}$,
where the infimum extends over all possible $\tilde{u}\in H^\mu(\mathbb{R}^2)$ such that $\tilde{u}=u$ on $\Omega$ (in the sense of distributions).

Taking notice of (\ref{equation:2.3.1.+}) again, it holds that
\[1+|\omega|^{2\mu}\thicksim (1+|\omega|^2)^\mu \thicksim (\lambda^2+|\omega|^2)^\mu \thicksim 1+(\lambda^2+|\omega|^2)^\mu.\]
Therefore,
\begin{equation}
  \|u\|_{H^\mu(\mathbb{R}^2)}\thicksim |u|_{H^{\mu,\lambda}(\mathbb{R}^2)} \thicksim \|u\|_{H^{\mu,\lambda}(\mathbb{R}^2)}.
\end{equation}
%Then define
%\begin{equation}
%  H^\mu(\Omega)=\{u\in L^2(\Omega)|\ \exists\tilde{u}\in H^\mu(\mathbb{R}^2)\ \ \textrm{s.t.}\ \tilde{u}|_\Omega=u\ \mathrm{and}\ \tilde{u}|_{\mathbb{R}^2\backslash\Omega}=0\}
%\end{equation}
%endowed with
%\begin{equation}
%  |u|_{H^\mu(\Omega)}=|\tilde{u}|_{H^\mu(\mathbb{R}^2)} \quad \textrm{and} \quad
%  \|u\|_{H^\mu(\Omega)}^2=\|u\|_{L^2(\Omega)}^2+|u|_{H^\mu(\Omega)}^2.
%\end{equation}
%Let $H_0^\mu(\Omega)$ denotes the closure of $C_0^\infty(\Omega)$ w.r.t. $\|\cdot\|_{H^\mu(\Omega)}$.
Then we have the following lemmas from \cite{Deng4} with extension of the space $H_0^{\mu}(\Omega)$ to two dimension.
\begin{lemma}%\cite{Deng4}
  Let $0<\mu_1<\mu_2$, and $\mu_1, \mu_2\neq n-\frac{1}{2}\,(n\in\mathbb{N}^+)$. If $u\in H_0^{\mu_2}(\Omega)$, then
  \begin{equation}
    \|u\|_{L^2(\Omega)}\lesssim |u|_{H_0^{\mu_2}(\Omega)}\quad \textrm{and} \quad
    |u|_{H^{\mu_1}(\Omega)}\lesssim |u|_{H^{\mu_2}(\Omega)}.  \nonumber
  \end{equation}
\end{lemma}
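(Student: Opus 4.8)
The plan is to transfer the problem to $\mathbb{R}^2$ by extension by zero and then argue with the Fourier characterisation of the $H^{\mu}$ semi-norm: the first (Poincar\'e-type) bound is reduced to its one-dimensional form from \cite{Deng4} applied slice by slice in $x$ (resp.\ $y$), and the second (interpolation) bound follows from the first by splitting frequency space at $|\omega|=1$. Concretely, since $\mu_1,\mu_2\neq n-\tfrac12$, extension by zero is a bounded map $H_0^{\mu_i}(\Omega)\to H^{\mu_i}(\mathbb{R}^2)$ that preserves the relevant semi-norms up to equivalent constants; so, writing $\tilde u$ for the zero extension of $u\in H_0^{\mu_2}(\Omega)$, I may assume $\tilde u\in H^{\mu_2}(\mathbb{R}^2)$ is supported in $\overline{\Omega}=[a,b]\times[c,d]$, with $|\tilde u|_{H^{\mu_i}(\mathbb{R}^2)}$ comparable to the semi-norms of $u$ appearing in the statement and $\|\tilde u\|_{L^2(\mathbb{R}^2)}=\|u\|_{L^2(\Omega)}$.

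For $\|u\|_{L^2(\Omega)}\lesssim|u|_{H_0^{\mu_2}(\Omega)}$ I would use (\ref{Hnorm}) to bound $\|u\|_{L^2(\Omega)}^2$ by the $x$-directional piece of $|\tilde u|_{H^{\mu_2}(\mathbb{R}^2)}^2$, which by Plancherel and Fubini equals $\int_{\mathbb{R}}\|{}_{-\infty}D_x^{\mu_2}\tilde u(\cdot,y)\|_{L^2(\mathbb{R})}^2\,dy$ up to constants, while $\|u\|_{L^2(\Omega)}^2=\int_c^d\|\tilde u(\cdot,y)\|_{L^2([a,b])}^2\,dy$. It then suffices to prove, uniformly in $y$, the one-dimensional inequality $\|v\|_{L^2([a,b])}\lesssim\|{}_{-\infty}D_x^{\mu_2}v\|_{L^2(\mathbb{R})}$ for $v$ supported in $[a,b]$, and integrate in $y$. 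This is exactly the estimate of \cite{Deng4}: for $0<\mu_2<1$ it comes from the double-integral (Gagliardo) form of the semi-norm by restricting the integration to $x\in[a,b]$ and $y$ in an adjacent interval of comparable length on which $v\equiv0$; for $\mu_2\geq1$ one differentiates $\lfloor\mu_2\rfloor$ times---each derivative is again supported in $[a,b]$, and its fractional order is again not a half-integer---and chains the classical $H_0^1$ Poincar\'e inequality with the fractional one.

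The interpolation bound is then immediate: writing $|\tilde u|_{H^{\mu_1}(\mathbb{R}^2)}^2=\int_{|\omega|\le1}|\omega|^{2\mu_1}|\hat{\tilde u}|^2\,d\omega+\int_{|\omega|>1}|\omega|^{2\mu_1}|\hat{\tilde u}|^2\,d\omega$, on $\{|\omega|>1\}$ we have $|\omega|^{2\mu_1}\le|\omega|^{2\mu_2}$ because $\mu_1<\mu_2$, so that term is $\le|\tilde u|_{H^{\mu_2}(\mathbb{R}^2)}^2$; on $\{|\omega|\le1\}$ we have $|\omega|^{2\mu_1}\le1$, so that term is $\le\|\tilde u\|_{L^2(\mathbb{R}^2)}^2=\|u\|_{L^2(\Omega)}^2\lesssim|u|_{H_0^{\mu_2}(\Omega)}^2$ by the first part. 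Since $|u|_{H^{\mu_1}(\Omega)}\le|\tilde u|_{H^{\mu_1}(\mathbb{R}^2)}$, and $|\tilde u|_{H^{\mu_2}(\mathbb{R}^2)}$ and $|u|_{H_0^{\mu_2}(\Omega)}$ are comparable to $|u|_{H^{\mu_2}(\Omega)}$ for $u\in H_0^{\mu_2}(\Omega)$, this yields $|u|_{H^{\mu_1}(\Omega)}\lesssim|u|_{H^{\mu_2}(\Omega)}$.

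I expect the only genuine delicacy to be the bookkeeping at the exceptional exponents: one must verify at each step that the zero extension is well defined and norm-preserving up to constants, that in the case $\mu_2\ge1$ each space obtained after differentiation still avoids the values $n-\tfrac12$ (true, since subtracting an integer preserves that condition), and that the $H^{\mu}$ semi-norms of $u$ on $\Omega$ and of $\tilde u$ on $\mathbb{R}^2$ are equivalent for $u\in H_0^{\mu_2}(\Omega)$. Once the ambient $\mathbb{R}^2$ setting is fixed, the inequalities themselves are a direct two-dimensional transcription of \cite{Deng4} made possible by (\ref{Hnorm}), so no new analytic obstacle arises.
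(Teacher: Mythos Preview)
The paper does not actually prove this lemma: it is stated immediately after the sentence ``Then we have the following lemmas from \cite{Deng4} with extension of the space $H_0^{\mu}(\Omega)$ to two dimension'' and is simply quoted from that reference. So there is no proof in the paper to compare against.

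Your argument is a correct and complete way of carrying out precisely the extension the paper alludes to. The reduction to one dimension via zero extension, the equivalence \eqref{Hnorm} (in its $\lambda=0$ form, $|\omega|^{2\mu}\sim|\omega_1|^{2\mu}+|\omega_2|^{2\mu}$), and Fubini is the natural mechanism, and the frequency-splitting argument for the second inequality is clean. One small wording issue: when you write ``use \eqref{Hnorm} to bound $\|u\|_{L^2(\Omega)}^2$ by the $x$-directional piece,'' the logic is actually the reverse---you use the slice-wise one-dimensional Poincar\'e inequality to bound $\|u\|_{L^2}$ by the $x$-directional semi-norm, and \emph{then} invoke \eqref{Hnorm} to bound that by the full $H^{\mu_2}$ semi-norm. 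Otherwise the proposal is sound and supplies exactly the details the paper omits.
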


\begin{lemma}%$\mathrm{\cite{Deng4}}$
  If $u\in H_0^\mu(\Omega)$ with $\mu\in(0,1), \mu\neq\frac{1}{2},$ for all $\lambda\geq 0$, then
  \begin{equation}
    |u|_{H^\mu(\Omega)}^2\sim\|u\|_{H^\mu(\Omega)}^2\sim|u|_{H^{\mu,\lambda}(\Omega)}^2\sim\|u\|_{H^{\mu,\lambda}(\Omega)}^2;
  \end{equation}
  and if $0<\mu_1<\mu$ and $\mu_1\neq\frac{1}{2}$, it holds that
  \begin{equation}
    |u|_{H^{\mu_1,\lambda}(\Omega)}^2\lesssim |u|_{H^{\mu,\lambda}(\Omega)}^2.
  \end{equation}
\end{lemma}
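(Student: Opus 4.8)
The plan is to derive every equivalence from two facts already at hand: the whole-space, Fourier-side equivalence $\|v\|_{H^\mu(\mathbb{R}^2)}\sim|v|_{H^{\mu,\lambda}(\mathbb{R}^2)}\sim\|v\|_{H^{\mu,\lambda}(\mathbb{R}^2)}$, established just above from Plancherel's theorem and the algebraic bound (\ref{equation:2.3.1.+}), and the fractional Poincar\'e-type inequality $\|u\|_{L^2(\Omega)}\lesssim|u|_{H^\mu(\Omega)}$ on $H_0^\mu(\Omega)$ with $\mu\ne\tfrac12$, which is the first statement of the previous lemma. The bridge between $\Omega$ and $\mathbb{R}^2$ is extension by zero: for $u\in H_0^\mu(\Omega)$ with $\mu\in(0,1)\setminus\{\tfrac12\}$ the zero-extension $\tilde u$ lies in $H^\mu(\mathbb{R}^2)$, and both $\|\tilde u\|_{H^\mu(\mathbb{R}^2)}$ and $|\tilde u|_{H^\mu(\mathbb{R}^2)}$ are equivalent to the corresponding quotient quantities $\|u\|_{H^\mu(\Omega)}$ and $|u|_{H^\mu(\Omega)}$ on $\Omega$; one also fixes $\|\cdot\|_{H^{\mu,\lambda}(\Omega)}$ to mean the quotient norm induced by $H^{\mu,\lambda}(\mathbb{R}^2)$, parallel to the definition of $\|\cdot\|_{H^\mu(\Omega)}$. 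One further elementary remark will be used repeatedly: since $(\lambda^2+|\omega|^2)^\mu\ge|\omega|^{2\mu}$ pointwise, comparing the Fourier integrals and taking the infimum over extensions gives $|u|_{H^\mu(\Omega)}\le|u|_{H^{\mu,\lambda}(\Omega)}$ for every $\lambda\ge0$.

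For the first chain I would proceed in three steps. Using $\|\tilde u\|_{H^\mu(\mathbb{R}^2)}^2=\|u\|_{L^2(\Omega)}^2+|\tilde u|_{H^\mu(\mathbb{R}^2)}^2$ ($\tilde u$ vanishing off $\Omega$) together with the zero-extension equivalence, one gets $\|u\|_{H^\mu(\Omega)}^2\sim\|u\|_{L^2(\Omega)}^2+|u|_{H^\mu(\Omega)}^2$; combined with the trivial bound $|u|_{H^\mu(\Omega)}^2\le\|u\|_{H^\mu(\Omega)}^2$ and the Poincar\'e inequality $\|u\|_{L^2(\Omega)}\lesssim|u|_{H^\mu(\Omega)}$, this yields $|u|_{H^\mu(\Omega)}^2\sim\|u\|_{H^\mu(\Omega)}^2$. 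Next, $H^\mu(\mathbb{R}^2)$ and $H^{\mu,\lambda}(\mathbb{R}^2)$ coincide as vector spaces with equivalent norms, so the families of extensions of $u$ coincide and the induced quotient norms on $\Omega$ are equivalent: $\|u\|_{H^\mu(\Omega)}^2\sim\|u\|_{H^{\mu,\lambda}(\Omega)}^2$. Finally $|u|_{H^{\mu,\lambda}(\Omega)}^2\le\|u\|_{H^{\mu,\lambda}(\Omega)}^2$ is immediate, and $\|u\|_{L^2(\Omega)}\lesssim|u|_{H^\mu(\Omega)}\le|u|_{H^{\mu,\lambda}(\Omega)}$ (Poincar\'e and the elementary remark) gives $\|u\|_{H^{\mu,\lambda}(\Omega)}^2\sim\|u\|_{H^\mu(\Omega)}^2\sim|u|_{H^{\mu,\lambda}(\Omega)}^2$. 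Thus all four quantities are pairwise equivalent; in the degenerate case $\lambda=0$ only the first of these equivalences is needed, since then the $\lambda$-weighted quantities reduce literally to the unweighted ones.

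For the second assertion I would apply the first part with $\mu_1$ in place of $\mu$; this is legitimate since $0<\mu_1<\mu<1$, $\mu_1\ne\tfrac12$, and $u\in H_0^\mu(\Omega)\subset H_0^{\mu_1}(\Omega)$, so that $|u|_{H^{\mu_1,\lambda}(\Omega)}^2\sim|u|_{H^{\mu_1}(\Omega)}^2$. Then the monotonicity of the semi-norms in the smoothness index, $|u|_{H^{\mu_1}(\Omega)}\lesssim|u|_{H^\mu(\Omega)}$ (the second statement of the previous lemma with $\mu_2=\mu$), followed by $|u|_{H^\mu(\Omega)}^2\sim|u|_{H^{\mu,\lambda}(\Omega)}^2$ from the first part, chains to the claimed estimate $|u|_{H^{\mu_1,\lambda}(\Omega)}^2\lesssim|u|_{H^{\mu,\lambda}(\Omega)}^2$.

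The only genuinely delicate point, as opposed to bookkeeping, is the transfer between the quotient norms on $\Omega$ and the whole-space norms of the zero-extension, i.e.\ the norm-equivalence of $H_0^\mu(\Omega)$ with the space of $H^\mu(\mathbb{R}^2)$-functions supported in $\overline{\Omega}$ (for both the full norm and the semi-norm). This is exactly where the hypothesis $\mu\ne\tfrac12$ is used---the endpoint $\mu=\tfrac12$ being the Lions--Magenes $H_{00}^{1/2}$ anomaly, which is also why the preceding lemmas exclude half-integer orders. Everything else---Plancherel, the bound (\ref{equation:2.3.1.+}), and the cited embedding and Poincar\'e inequalities---is routine. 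Alternatively, one can bypass two-dimensional extension theory by using the splitting (\ref{Hnorm}) to reduce each two-dimensional semi-norm to a one-dimensional tempered fractional semi-norm in a single variable, with the other variable frozen, and then invoke the corresponding one-dimensional results of \cite{Deng4} slicewise before integrating.
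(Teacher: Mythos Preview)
The paper does not supply its own proof of this lemma: it is stated together with the preceding lemma under the sentence ``Then we have the following lemmas from \cite{Deng4} with extension of the space $H_0^{\mu}(\Omega)$ to two dimension,'' so there is nothing explicit to compare against beyond the implicit indication that the argument is the one-dimensional proof of \cite{Deng4} carried out coordinatewise.

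Your proposal is sound and is, in spirit, the natural argument one would expect \cite{Deng4} to contain: zero-extension from $H_0^\mu(\Omega)$ to $H^\mu(\mathbb{R}^2)$ (this is precisely where $\mu\neq\tfrac12$ enters), the Fourier-side equivalences $\|v\|_{H^\mu(\mathbb{R}^2)}\sim|v|_{H^{\mu,\lambda}(\mathbb{R}^2)}\sim\|v\|_{H^{\mu,\lambda}(\mathbb{R}^2)}$ already recorded in the paper, and the Poincar\'e inequality from the preceding lemma to collapse the $L^2$ part of the full norm into the semi-norm. Your derivation of the second assertion by reducing to the unweighted monotonicity $|u|_{H^{\mu_1}(\Omega)}\lesssim|u|_{H^{\mu}(\Omega)}$ via the first part is also correct. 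The alternative you sketch at the end---using the splitting (\ref{Hnorm}) to reduce to one-dimensional estimates in each variable and then invoking \cite{Deng4} slicewise---is closer to what the paper's one-line citation suggests, and is arguably the more economical route here since the one-dimensional results are already available. Either way there is no gap.
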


\begin{lemma}\label{lemma:2.2.11}
Let $0<\alpha<1$, and $\lambda>0$. If $f(x,y)\in H_0^\alpha(\Omega)$, then
\begin{equation}\label{equation:2.2.16}
\begin{split}
\left(\,_{a}D_{x}^{\alpha/2,\lambda}f(x,y),\,_{x}D_{b}^{\alpha/2,\lambda}f(x,y)\right)_\Omega
&\geq\cos(\pi\alpha/2)\cdot\|\,_{a}D_{x}^{\alpha/2,\lambda}f(x,y)\|_{L^2(\Omega)}^2
\\&\sim\cos(\pi\alpha/2)\cdot\|\,_{x}D_{b}^{\alpha/2,\lambda}f(x,y)\|_{L^2(\Omega)}^2
\\&\sim\cos(\pi\alpha/2)\cdot\|f(x,y)\|_{H_x^{\alpha/2,\lambda}(\Omega)}^2,
\end{split}
\end{equation}
and
\begin{equation}\label{equation:2.2.17}
\begin{split}
\left(\,_{c}D_{y}^{\alpha/2,\lambda}f(x,y),\,_{y}D_{d}^{\alpha/2,\lambda}f(x,y)\right)
&\geq\cos(\pi\alpha/2)\cdot\|\,_{c}D_{y}^{\alpha/2,\lambda}f(x,y)\|_{L^2(\Omega)}^2
\\&\sim\cos(\pi\alpha/2)\cdot\|\,_{y}D_{d}^{\alpha/2,\lambda}f(x,y)\|_{L^2(\Omega)}^2
\\&\sim\cos(\pi\alpha/2)\cdot\|f(x,y)\|_{H_y^{\alpha/2,\lambda}(\Omega)}^2.
\end{split}
\end{equation}
\end{lemma}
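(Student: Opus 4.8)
The plan is to prove \eqref{equation:2.2.16} (the statement for the $y$-direction being identical by symmetry), working on the whole plane first and then transferring to $\Omega$. Since $f\in H_0^\alpha(\Omega)$, its zero extension $\tilde f$ lies in $H^\alpha(\mathbb{R}^2)$, and Condition A is satisfied, so by Lemma~\ref{lemma:2.2.2} (adjoint property) together with the semigroup identity \eqref{2.2.6.+} one has, writing $\mathcal{D}_L={}_{a}D_x^{\alpha/2,\lambda}$ and $\mathcal{D}_R={}_xD_b^{\alpha/2,\lambda}$,
\begin{equation}
\left(\mathcal{D}_L f,\mathcal{D}_R f\right)_\Omega
=\left({}_{-\infty}D_x^{\alpha/2,\lambda}\tilde f,\,{}_xD_{\infty}^{\alpha/2,\lambda}\tilde f\right)_{\mathbb{R}^2}
=\left({}_{-\infty}D_x^{\alpha,\lambda}\tilde f,\,\tilde f\right)_{\mathbb{R}^2}.\nonumber
\end{equation}

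Next I would pass to the Fourier side in the $x$-variable (Plancherel, holding $y$ as a parameter). By Lemma~\ref{lemma:2.2.1.+} the symbol of ${}_{-\infty}D_x^{\alpha,\lambda}$ is $(\lambda+i\omega_1)^{\alpha}$, so the expression becomes $\int_{\mathbb{R}^2}(\lambda+i\omega_1)^{\alpha}|\hat{\tilde f}(\omega_1,\omega_2)|^2\,d\omega$. The key elementary computation is to take real parts: writing $\lambda+i\omega_1=\rho\,\e^{i\theta}$ with $\rho=(\lambda^2+\omega_1^2)^{1/2}$ and $\theta\in(-\pi/2,\pi/2)$ (the principal branch, which is legitimate because $\lambda>0$), one gets $\operatorname{Re}(\lambda+i\omega_1)^{\alpha}=\rho^{\alpha}\cos(\alpha\theta)\ge \rho^{\alpha}\cos(\pi\alpha/2)$, using that $|\alpha\theta|<\pi\alpha/2<\pi/2$ and that cosine is even and decreasing on $[0,\pi/2]$. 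Since the left-hand side of the displayed identity is real, it equals its own real part, whence
\begin{equation}
\left(\mathcal{D}_L f,\mathcal{D}_R f\right)_\Omega
\ge \cos(\pi\alpha/2)\int_{\mathbb{R}^2}(\lambda^2+\omega_1^2)^{\alpha/2}|\hat{\tilde f}|^2\,d\omega
=\cos(\pi\alpha/2)\,|\tilde f|_{H_x^{\alpha/2,\lambda}(\mathbb{R}^2)}^2.\nonumber
\end{equation}

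Finally I would match this up with the three quantities on the right of \eqref{equation:2.2.16}. The Fourier symbol of ${}_{-\infty}D_x^{\alpha/2,\lambda}$ has modulus $(\lambda^2+\omega_1^2)^{\alpha/4}$, so Plancherel gives $\|{}_{-\infty}D_x^{\alpha/2,\lambda}\tilde f\|_{L^2(\mathbb{R}^2)}^2=\|{}_xD_{\infty}^{\alpha/2,\lambda}\tilde f\|_{L^2(\mathbb{R}^2)}^2=|\tilde f|_{H_x^{\alpha/2,\lambda}(\mathbb{R}^2)}^2$ exactly, and this in turn is equivalent to $\|\tilde f\|_{H_x^{\alpha/2,\lambda}(\mathbb{R}^2)}^2$ by the Poincaré-type inequality $\|\tilde f\|_{L^2}\lesssim |\tilde f|_{H^{\alpha/2}}$ valid on $H_0^{\alpha/2}(\Omega)$ (the lemma just above, applicable since $\alpha/2\in(0,1/2)$ for $\alpha<1$, so $\alpha/2\ne 1/2$), combined with the earlier equivalence $\|\cdot\|_{H^{\alpha/2}}\sim\|\cdot\|_{H^{\alpha/2,\lambda}}$. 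Passing from $\tilde f$ on $\mathbb{R}^2$ back to $f$ on $\Omega$ is immediate because the extension is by zero, so all the seminorms restricted to $\Omega$ agree with their $\mathbb{R}^2$ counterparts; the norm equivalences survive by the definition of $\|\cdot\|_{H^{\alpha/2,\lambda}(\Omega)}$ via extension. I expect the only genuinely delicate point to be the branch-cut bookkeeping for the complex power $(\lambda+i\omega_1)^{\alpha}$ — one must be sure the argument stays strictly inside $(-\pi\alpha/2,\pi\alpha/2)\subset(-\pi/2,\pi/2)$ so that the principal branch is continuous and the bound $\cos(\alpha\theta)\ge\cos(\pi\alpha/2)$ holds uniformly; everything else is a routine Plancherel-and-equivalence argument. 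Note the restriction $\alpha<1$ is exactly what keeps $\alpha/2<1/2$, avoiding the excluded Sobolev exponent and the extra boundary terms in \eqref{2.2.3.+}.
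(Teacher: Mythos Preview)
The paper does not actually prove this lemma; it is stated (together with the two preceding lemmas) as a result imported from \cite{Deng4}, and the remark following it again directs the reader to \cite{Deng4} for the details. So there is no in-paper proof to compare against.

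That said, your argument is the standard one and it is correct: extend by zero, pass to the Fourier side via Lemma~\ref{lemma:2.2.1.+}, and bound the real part of the symbol $(\lambda+i\omega_1)^{\alpha}$ below by $(\lambda^2+\omega_1^2)^{\alpha/2}\cos(\pi\alpha/2)$ using $|\arg(\lambda+i\omega_1)|<\pi/2$ and $0<\alpha<1$. The identification $\left(\mathcal{D}_L f,\mathcal{D}_R f\right)_\Omega=\left({}_{-\infty}D_x^{\alpha/2,\lambda}\tilde f,\,{}_xD_{\infty}^{\alpha/2,\lambda}\tilde f\right)_{\mathbb{R}^2}$ is justified because, outside $[a,b]\times[c,d]$, one of the two one-sided derivatives of the zero extension always vanishes, so the integrand is supported in $\Omega$; you might make this explicit. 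The norm equivalences in the second and third lines then follow exactly as you say, from $|(\lambda\pm i\omega_1)^{\alpha/2}|^2=(\lambda^2+\omega_1^2)^{\alpha/2}$ together with the Poincar\'e-type estimate and the $H^{\mu}\sim H^{\mu,\lambda}$ equivalence already recorded in the paper. This is precisely the route taken in the reference the paper cites.
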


\begin{remark}
Lemma \ref{lemma:2.2.11} shows the coercivity of the tempered fractional operator since $\cos(\pi\alpha/2)>0$ when $0<\alpha<1$. If $1<\alpha<2$, the tempered fractional operator is also coercive; for the details of the proof, see \cite{Deng4}.
\end{remark}

%\begin{lemma}\cite{Riv1}
%  Let $E$ be a triangle or parallelogram in 2D. Let $u\in H_0^{s,\lambda}(E)$, for $\lambda\geq0$ and $s\geq1$. Let $k\geq0$ be any integer. There exist a function $\tilde{u}\in P_k(E)$ such that
%  \begin{equation}
%    \forall 0\leq q\leq s, \qquad \|u-\tilde{u}\|_{H^{q,\lambda}(E)}\lesssim h_E^{\min(k+1,s)-q}|u|_{H^{s,\lambda}(E)}.
%  \end{equation}
%\end{lemma}
%
%As a consequence, the domain $\Omega$ is subdivided into triangles, and the same approximation on the domain $\Omega$ can be get by constructing a global approximation $\tilde{u}$.

\subsection{Notations for DG methods}\label{subsection:2.2}
Denote $\Omega_h$ as a conforming subdivision of $\Omega\subset\mathbb{R}^2$ with non-overlapping triangles. For an element $T\in\Omega_h$, $h_T$ denotes its diameter, and $h=\max_{T\in\Omega_h}h_T$. The family of meshes $\Omega_h$ is assumed to be shape-regular, i.e., there exists a constant $c$ such that for all $T\in\Omega_h$, $\frac{h_T}{\rho_T}\leq c$, where $\rho_T$ denotes the radius of the largest inscribed ball in $T$. Denote $\Gamma$ as the union of the boundaries of the elements $T$ of $\Omega_h$, and $\Gamma_i$ the set of interior faces of $\Omega_h$, i.e., the set of faces that are not included in the boundary $\partial\Omega$. Let $\Gamma_b$ be the faces that are included in $\partial\Omega$. Then $\Gamma=\Gamma_i\cup\Gamma_b$.
Associated with the mesh $\Omega_h$, we define the broken Sobolev spaces
%\[L^2(\Omega_h):=\{v:\Omega\rightarrow \mathbb{R}\big|\ \:v|_T\in L^2(T),\ \forall T\in\Omega_h\},\]
\[H^1(\Omega_h):=\{v:\Omega\rightarrow \mathbb{R}\big|\ \:\nabla v|_T\in[L^2(T)]^2,\ \forall T\in\Omega_h\},\]
and
\[H^{s,\lambda}(\Omega_h):=\{v:\Omega\rightarrow \mathbb{R}\big|\ \:v|_T\in H^{s,\lambda}(T),\ \forall T\in \Omega_h\},\]
equipped with the broken Sobolev norm
\begin{equation}\label{Enorm}
\|v\|_{H^{s,\lambda}(\Omega_h)}:=\left(\sum_{T\in\Omega_h}\|v\|_{H^{s,\lambda}(T)}^2\right)^{1/2},
\end{equation}
where
\[\|u\|^2_{H^{s,\lambda}(T)}:=\int_T |_aD_x^{s,\lambda}u(x,y)|^2+|_cD_y^{s,\lambda}u(x,y)|^2 dxdy.\]
The $H^{s,\lambda}(T)$ norm is defined through (\ref{Hnorm}). And it also can be defined by the right derivatives.

We define the DG finite element space as follows:
\begin{equation} \label{PolySpace}
V_h=\{v:v\in L^2(\Omega)\big|\ v|_T\in P_N(T),\ \forall T\in\Omega_h\},
\end{equation}
where $P_N(T)$ denotes the set of polynomials of degree less than or equal to $N$.
The global solution can be approximated as
\begin{equation}
u_h(\mathbf{x},t)=\bigoplus_{T\in\Omega_h}u_h(\mathbf{x},t)|_T\in V_h,
\end{equation}
and the local solution $u(\mathbf{x},t)$ can be expressed by
\begin{equation}
u_h(\mathbf{x},t)|_T=\sum_{j=1}^{N_p}u_h(\mathbf{x}_j,t)l_j(\mathbf{x}) \qquad \mathbf{x}\in T,
\end{equation}
where $l_j(\mathbf{x})$ denotes the two-dimensional multivariate Lagrange interpolation basis function, and
$N_p=(N+1)(N+2)/2$ is degree of freedom of one element.

Next, we introduce some notations to manipulate numerical fluxes.
If two elements $T_e^1$ and $T_e^2$ are neighbors and share one common side $e$, there are two traces of the function $v$ along $e$. We assume that the normal vector $\mathbf{n}_e$ is oriented from $T_e^1$ to $T_e^2$, and denote
\[\{v\}=\frac{1}{2}(v|_{T_e^1})+\frac{1}{2}(v|_{T_e^2}), \qquad [v]=(v|_{T_e^1})-(v|_{T_e^2}) \quad \forall e=\partial T_e^1\cap\partial T_e^2.\]
The definition of jump and average to sides that belong to the boundary $\partial\Omega$ is:
\[\{v\}=[v]=(v|_{T_e^1})\quad \forall e=\partial T_e^1\cap\partial\Omega.\]
%For $e\in\Gamma,$ we refer to the exterior information by a superscript $`+$' and to the interior information by a superscript $`-$'. Then we can define the jump and average of $v$ on the faces,
%\[\{v\}=\frac{v^++v^-}{2}\qquad\mathrm{on}\quad e\in\Gamma_i,
%\qquad\qquad [v]=v^+\mathbf{n}^++v^-\mathbf{n}^-\qquad\mathrm{on}\quad e\in\Gamma_i,\]
%and
%\[\{v\}=v\qquad\mathrm{on}\quad e\in\Gamma_b,\qquad\qquad[v]=v\mathbf{n}\qquad\mathrm{on}\quad e\in\Gamma_b.\]

We also introduce two bilinear forms $J_0,\,J_1:H^{s,\lambda}(\Omega_h)\times H^{s,\lambda}(\Omega_h)\rightarrow \mathbb{R}$ that penalize the jump of the function values and the jump of the normal derivative values:
\[J_0(v,w)=\sum_{e\in\Gamma}\int_e\:[v][w],\qquad\qquad
J_1(v,w)=\sum_{e\in\Gamma_i}\int_e\:[\nabla v\cdot\mathbf{n}][\nabla w\cdot\mathbf{n}].\]

\begin{remark}
Reference \cite{Erik1} presents numerical analysis for the advection-reaction equation in the case of continuous, piecewise affine approximations (using the continuous interior penalty method (CIP)) and discontinuous, piecewise affine or constant approximations (DG). In both of the two cases, stability is obtained by using interior penalty; for CIP, the gradient jumps over element faces are penalized, and for DG, the jumps of the solution.
 % the gradient jumps and the jumps of the function values over element faces are penalized respectively.
\end{remark}

\subsection{Variational formulation}\label{subsection:2.3}
From the definition of tempered fractional derivatives, we rewrite Eq (\ref{equation:1.1}) as a more clear form
\begin{equation}\label{equation:3.1}
\left\{ \begin{array}{ll}
\frac{\partial}{\partial t}u+\mathbf{b}\cdot\nabla u+
\kappa_1\kappa_\alpha({}_aD_x^{\alpha,\lambda}+{}_xD_b^{\alpha,\lambda}-2\lambda^\alpha)u\\ \qquad\qquad\qquad +
\kappa_2\kappa_\beta({}_cD_y^{\beta,\lambda}+{}_yD_d^{\beta,\lambda}-2\lambda^\beta)u\,=\,f & \Omega\times J \\
u(\mathbf{x},0)\,=\,u_0(\mathbf{x}) & \Omega\\
u(\mathbf{x},t)\,=\,0 &\mathbb{R}^2\backslash\Omega\times J.
\end{array} \right.
\end{equation}
Here, we mainly discuss the case $\alpha,\beta\in(0,1)$.
Then we formulate the weak variational formulation. For $f\in H^{-s,\lambda}(\Omega)$, find $u\in V:= H^{s,\lambda}_0(\Omega)\bigcap H^1(\Omega)$, where $s=\max\{\alpha/2,\beta/2\}$, such that $\forall v\in V$,
\begin{equation}\label{equation:3.2}
\begin{split}
\left(\frac{\partial}{\partial t}u,v\right)+(\mathbf{b}\cdot\nabla u,v)
+\kappa_1\kappa_\alpha\left(({}_aD_x^{\alpha,\lambda}+{}_xD_b^{\alpha,\lambda}-2\lambda^\alpha)u,v\right)\\
+\kappa_2\kappa_\beta\left(({}_cD_y^{\beta,\lambda}+{}_yD_d^{\beta,\lambda}-2\lambda^\beta)u,v\right)=(f,v).
\end{split}
\end{equation}

Denote two bilinear forms as $a_x^\alpha(u,v)=({}_aD_x^{\frac{\alpha}{2},\lambda}u,{}_xD_b^{\frac{\alpha}{2},\lambda}v)+
({}_aD_x^{\frac{\alpha}{2},\lambda}v,{}_xD_b^{\frac{\alpha}{2},\lambda}u),$ and $a_y^\beta(u,v)=({}_cD_y^{\frac{\beta}{2},\lambda}u,{}_yD_d^{\frac{\beta}{2},\lambda}v)+
({}_cD_y^{\frac{\beta}{2},\lambda}v,{}_yD_d^{\frac{\beta}{2},\lambda}u),$ and
 define
\begin{equation}\label{bilinearA}
a(u,v)=\kappa_1\kappa_\alpha a_x^\alpha(u,v)+\kappa_2\kappa_\beta a_y^\beta(u,v),
\end{equation}
and
\begin{equation*}
  b(u,v)=(\mathbf{b}\cdot\nabla u,v).
\end{equation*}
Then the variational formulation becomes
\begin{equation}\label{equation:3.3}
\left(\frac{\partial}{\partial t}u,v\right)+b(u,v)+a(u,v)-\kappa(u,v)=(f,v),
\end{equation}
where $\kappa=2\lambda^\alpha\kappa_1\kappa_\alpha+2\lambda^\beta\kappa_2\kappa_\beta>0$.
%Next we provide the DG variational formulation by restrict the trial and test function to the broken Sobolev space.
So the DG bilinear form should be defined as
%First, for any $u,v\in H^{s,\lambda}(\Omega_h)$, define
\begin{equation*}
a_h(u,v)=\kappa_1\kappa_\alpha a_x^\alpha(u,v)+\kappa_2\kappa_\beta a_y^\beta(u,v)+J_0(u,v)  \quad \forall u,v\in H^{s,\lambda}(\Omega_h),
\end{equation*}
and
\begin{equation*}
b_h(u,v)=-(\mathbf{b}u,\nabla v)+\sum_{e\in\Gamma}\int_e \mathbf{b}\cdot \mathbf{n_e}\hat{u}[v]  \quad \forall u,v\in H^{s,\lambda}(\Omega_h),
\end{equation*}
where $\hat{u}$ corresponding to the element boundary terms from integration by parts is the so-called numerical flux. It is a single valued function defined on the faces and should be designed based on different guiding principles for different PDEs to guarantee stability and optimal order of convergence. Here we choose an upwind flux. Denote the upwind value of a function $v$ by $v^\mathrm{up}$. We recall that $\mathbf{n_e}$ is a unit normal vector pointing from $T_e^1$ to $T_e^2$:
\begin{equation}
v^\mathrm{up}=\left\{\begin{split}
v|_{T_e^1} \qquad \mathrm{if}\: \mathbf{b}\cdot\mathbf{n_e}\geq0~\\
v|_{T_e^2} \qquad \mathrm{if}\: \mathbf{b}\cdot\mathbf{n_e}<0.
\end{split}\right.
\end{equation}
Then the DG variational formulation can be stated as: find $u\in H^{s,\lambda}(\Omega_h)\bigcap H^1(\Omega_h)$, such that $\forall v\in H^{s,\lambda}(\Omega_h)\bigcap H^1(\Omega_h)$ there exists
\begin{equation}\label{eqDGform}
  \left(\frac{\partial}{\partial t}u,v\right)+b_h(u,v)+a_h(u,v)-\kappa(u,v)=(f,v).
\end{equation}
We discrete the time derivative with backward Euler. Let $N_T$ be a positive integer and $\Delta t=T/N_T$ denote the time step. We also use the notations:
\[ t^n=n\cdot\Delta t,\quad u^n(\mathbf{x})=u(\mathbf{x},t_n) \quad\forall n\geq0.\]
Define the orthogonal projection operators, $P_h:L_2(\Omega_h)\,\rightarrow\,V_h$, i.e., for each element $T$,
\begin{equation}\label{equation:3.5.+}
  (P_hu-u,v)_T=0 \qquad \forall v\in P_N(T).
\end{equation}
The fully discrete DG scheme is as follows:
find $u_h^{n+1}\in V_h$, such that $\forall v\in V_h$, there exists
\begin{equation}\label{equation:3.5}
\left(\frac{u_h^{n+1}-u_h^n}{\Delta t},v\right)+b_h(u_h^{n+1},v)+a_h(u_h^{n+1},v)-\kappa(u_h^{n+1},v)=(f^{n+1},v)
\end{equation}
with known $u_h^n$, and if $n=0,~u_h^0=P_hu_0(x)$.

\subsection{Stability analysis and error estimates}\label{subsection:2.4}
\begin{lemma}[\rm Discrete Gr\"onwall inequality \cite{Riv1}]\label{lemma:4.1}
Let $\Delta t,B,C>0$ and $(a_n)_n$, $(b_n)_n$, $(c_n)_n$ be sequences of nonnegative numbers satisfying
\[a_n+\Delta t\sum_{i=0}^nb_i\leq B+C\Delta t\sum_{i=0}^na_i+\Delta t\sum_{i=0}^nc_i \quad \forall n\geq0. \]
Then, if $C\Delta t<1$,
\[a_n+\Delta t\sum_{i=0}^nb_i\leq e^{C(n+1)\Delta t}(B+\Delta t\sum_{i=0}^nc_i) \quad \forall n\geq0.\]
\end{lemma}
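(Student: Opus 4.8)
The plan is to reduce this discrete Gr\"onwall inequality to a one‑step recursion in a single scalar sequence and then iterate. First I would absorb the two nonnegative auxiliary sequences: set
\[ G_n := a_n + \Delta t\sum_{i=0}^n b_i, \qquad D_n := B + \Delta t\sum_{i=0}^n c_i . \]
Because the $c_i$ are nonnegative, $D_n$ is nondecreasing in $n$, and because the $b_i$ are nonnegative, $a_i\le G_i$ for every $i$; substituting $a_i\le G_i$ into the right‑hand side of the hypothesis turns it into the self‑referential inequality $G_n\le D_n+C\Delta t\sum_{i=0}^n G_i$ for all $n\ge 0$. Splitting off the diagonal term $i=n$ and using $C\Delta t<1$ to move it to the left gives
\[ (1-C\Delta t)\,G_n \le D_n + C\Delta t\sum_{i=0}^{n-1} G_i , \]
which now bounds $G_n$ through strictly earlier values only.

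Next I would show, by induction on $n$, that $G_n\le D_n\,(1-C\Delta t)^{-(n+1)}$. The base case $n=0$ is immediate since the sum on the right is empty. For the inductive step I would insert the bounds $G_i\le D_i(1-C\Delta t)^{-(i+1)}\le D_n(1-C\Delta t)^{-(i+1)}$ (using monotonicity of $D$) for $0\le i\le n-1$, pull $D_n$ out, and evaluate the resulting finite geometric sum: with $r:=(1-C\Delta t)^{-1}$ one has $r-1=C\Delta t\,r$, hence $C\Delta t\sum_{i=0}^{n-1}r^{\,i+1}=r^{\,n}-1$, so $1+C\Delta t\sum_{i=0}^{n-1}r^{\,i+1}=r^{\,n}$, and therefore $(1-C\Delta t)G_n\le D_n r^{\,n}$, i.e. $G_n\le D_n r^{\,n+1}$, which closes the induction.

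It then remains to pass from the algebraic factor $(1-C\Delta t)^{-(n+1)}$ to an exponential one, and this is the step I expect to be the only delicate point. The naive inequality $1-x\le e^{-x}$ is useless here because it points the wrong way (it gives $(1-x)^{-1}\ge e^{x}$); instead I would use $-\ln(1-x)\le x/(1-x)$ for $0\le x<1$, which holds because both sides vanish at $x=0$ and the derivative of the right side, $(1-x)^{-2}$, dominates that of the left, $(1-x)^{-1}$. Taking $x=C\Delta t$ and raising to the power $n+1$ gives $(1-C\Delta t)^{-(n+1)}\le e^{C(n+1)\Delta t/(1-C\Delta t)}$, so that
\[ a_n+\Delta t\sum_{i=0}^n b_i = G_n \le e^{C(n+1)\Delta t/(1-C\Delta t)}\Big(B+\Delta t\sum_{i=0}^n c_i\Big), \]
which gives the asserted estimate (with the sharp exponent $C(n+1)\Delta t/(1-C\Delta t)$; the written form $e^{C(n+1)\Delta t}$ is recovered after the usual renaming of the generic constant, using $C\Delta t<1$). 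The main obstacle is thus not the iteration — which is pure algebra — but this exponentiation, together with the mild bookkeeping needed to keep the $b$‑sum on the left throughout the argument.
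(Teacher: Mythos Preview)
The paper does not prove this lemma at all: it is stated with a citation to Rivi\`ere's monograph \cite{Riv1} and used as a black box in the stability and convergence arguments, so there is no ``paper's own proof'' to compare against. Your argument is the standard one (absorb the $b$-sum into $G_n$, peel off the diagonal term using $C\Delta t<1$, close a geometric-sum induction, and exponentiate), and every step is carried out correctly.

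The one point worth flagging is the last paragraph. You correctly observe that $1-x\le e^{-x}$ goes the wrong way and instead use $-\ln(1-x)\le x/(1-x)$, which yields $(1-C\Delta t)^{-(n+1)}\le e^{C(n+1)\Delta t/(1-C\Delta t)}$; this bound is \emph{larger} than the $e^{C(n+1)\Delta t}$ written in the lemma, so your ``renaming the generic constant'' is not cosmetic---it really changes the statement. In fact the lemma \emph{as literally written} (with the implicit term $a_n$ inside $\sum_{i=0}^n a_i$ and the same $C$ on both sides) cannot hold with the exponent $C(n+1)\Delta t$: already for $n=0$, $b_i=c_i=0$ one can take $a_0=B/(1-C\Delta t)$, and $1/(1-C\Delta t)>e^{C\Delta t}$ whenever $C\Delta t>0$. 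So your sharper exponent $C(n+1)\Delta t/(1-C\Delta t)$ is the honest conclusion, and the discrepancy is a known imprecision in how this lemma is often quoted rather than a defect in your proof.
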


\begin{lemma}\label{lemma:4.2}
  For any function $f(x),g(x,t)\in L^2(\Omega)$, if $|f(x)|\leq\int_{t_1}^{t_2}g(x,t)dt~\forall x\in\Omega$, then
  \[\|f(x)\|_{L^2(\Omega)}^2\leq(t_2-t_1)\int_{t_1}^{t_2}\|g(x,t)\|_{L^2(\Omega)}^2dt.\]
\end{lemma}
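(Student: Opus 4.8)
The plan is to reduce the claim to the classical Cauchy--Schwarz inequality applied pointwise in $x$, and then integrate over $\Omega$. First I would fix $x\in\Omega$ and apply the hypothesis $|f(x)|\leq\int_{t_1}^{t_2}g(x,t)\,dt$. Squaring both sides gives $|f(x)|^2\leq\left(\int_{t_1}^{t_2}g(x,t)\,dt\right)^2$. Now I would apply the Cauchy--Schwarz inequality to the right-hand side, writing $g(x,t)=1\cdot g(x,t)$ on the interval $[t_1,t_2]$, which yields
\begin{equation*}
\left(\int_{t_1}^{t_2}g(x,t)\,dt\right)^2\leq\left(\int_{t_1}^{t_2}1^2\,dt\right)\left(\int_{t_1}^{t_2}g(x,t)^2\,dt\right)=(t_2-t_1)\int_{t_1}^{t_2}g(x,t)^2\,dt.
\end{equation*}
Combining the two displays gives the pointwise bound $|f(x)|^2\leq(t_2-t_1)\int_{t_1}^{t_2}g(x,t)^2\,dt$ for all $x\in\Omega$.

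The second and final step is to integrate this pointwise inequality over $\Omega$ with respect to $x$:
\begin{equation*}
\|f\|_{L^2(\Omega)}^2=\int_\Omega|f(x)|^2\,dx\leq(t_2-t_1)\int_\Omega\int_{t_1}^{t_2}g(x,t)^2\,dt\,dx=(t_2-t_1)\int_{t_1}^{t_2}\|g(\cdot,t)\|_{L^2(\Omega)}^2\,dt,
\end{equation*}
where the exchange of the order of integration in the last equality is justified by Tonelli's theorem, since $g(x,t)^2\geq0$ is measurable on $\Omega\times[t_1,t_2]$; the finiteness of the right-hand side follows from the assumption $g\in L^2$ (interpreted so that $g(\cdot,t)\in L^2(\Omega)$ for a.e.\ $t$ and the resulting function of $t$ is integrable on $[t_1,t_2]$). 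This completes the proof.

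There is no serious obstacle here: the only points requiring a word of care are the measurability/integrability bookkeeping needed to invoke Tonelli (so that the iterated integral equals the double integral and the statement $\|g(\cdot,t)\|_{L^2(\Omega)}$ makes sense for a.e.\ $t$), and the implicit assumption that $g\geq0$ on $[t_1,t_2]$ — or, failing that, replacing $g$ by $|g|$ in the hypothesis, which only strengthens the bound $\int g\,dt\geq\int g\,dt$ is not needed since $|f|\leq\int g\,dt\leq\int|g|\,dt$. Everything else is a direct two-line computation.
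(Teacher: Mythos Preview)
Your proof is correct and matches the paper's approach: the paper simply states that the inequality ``can be easily obtained by using H\"{o}lder's inequality'' and omits the details, which is precisely what you have carried out (Cauchy--Schwarz being the $p=q=2$ case of H\"{o}lder). Your added remarks on Tonelli and the implicit nonnegativity of $g$ are reasonable bookkeeping that the paper does not spell out.
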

\begin{proof}
  This inequality can be easily obtained by using H\"{o}ld's inequality. Here we omit it.
\end{proof}
In the following, $C$ denotes a generic constant independent of $h$ and $\Delta t$, which takes different values in different occurrences.
First, we prove the coercivity and continuity of the bilinear form $a_h(u,v)$.
%\begin{definition}
%A bilinear form $a(u,v)$ defined on a normal linear space $V$ with norm $\|\cdot\|_V$ is coercive if there exists a positive constant $\gamma$ such that
%\[\forall v\in V, \quad \gamma\|v\|_V^2\leq a(v,v).\]
%\end{definition}
Define the energy norm on $V_h$ as
\begin{equation}\label{equation:4.1.1}
\|v\|_{E(\Omega_h)}^2=\|v\|_{H_x^{\alpha/2,\lambda}(\Omega_h)}^2+\|v\|_{H_y^{\beta/2,\lambda}(\Omega_h)}^2+\sum_{e\in\Gamma}\int_e[v]^2,
\end{equation}
where
\[\|v\|_{H_x^{\alpha/2,\lambda}(\Omega_h)}^2=\sum\limits_{T\in\Omega_h}\|_aD_x^{\alpha/2,\lambda}v\|_{L^2(T)}^2,~~
\|v\|_{H_y^{\beta/2,\lambda}(\Omega_h)}^2=\sum\limits_{T\in\Omega_h}\|_cD_y^{\beta/2,\lambda}v\|_{L^2(T)}^2.\]
Then from \eqref{bilinearA} and Lemma \ref{lemma:2.2.11}, we have
\begin{equation}\label{equation:4.1}
\begin{split}
a_h(v,v)&=2\kappa_1\kappa_\alpha({}_aD_x^{\frac{\alpha}{2},\lambda}v,{}_xD_b^{\frac{\alpha}{2},\lambda}v)+
2\kappa_2\kappa_\beta({}_cD_y^{\frac{\beta}{2},\lambda}u,{}_yD_d^{\frac{\beta}{2},\lambda}v)+\sum_{e\in\Gamma}\int_e[v]^2\\
&\geq\kappa_1\|v\|_{H_x^{\alpha/2,\lambda}(\Omega_h)}^2+\kappa_2\|v\|_{H_y^{\beta/2,\lambda}(\Omega_h)}^2+\sum_{e\in\Gamma}\int_e[v]^2\\
&\geq\gamma\|v\|_{E(\Omega_h)}^2,
\end{split}
\end{equation}
where $\gamma=\min\{\kappa_1,\kappa_2,1\}$.

On the other hand,
from the definition of $a_h(u,v)$, and Cauchy-Schwarz's inequality, we have
\begin{equation}\label{equation:3.6}
\begin{split}
  |a_h(u,v)|
  \leq&~ |\kappa_1\kappa_\alpha|\ \left(|({}_aD_x^{\frac{\alpha}{2},\lambda}u,{}_xD_b^{\frac{\alpha}{2},\lambda}v)|+
  |({}_aD_x^{\frac{\alpha}{2},\lambda}v,{}_xD_b^{\frac{\alpha}{2},\lambda}u)|\right)\\
  &+|\kappa_2\kappa_\beta|\ \left(|({}_cD_y^{\frac{\beta}{2},\lambda}u,{}_yD_d^{\frac{\beta}{2},\lambda}v)|+
  |({}_cD_y^{\frac{\beta}{2},\lambda}v,{}_yD_d^{\frac{\beta}{2},\lambda}u)|\right)+
  \sum_{e\in\Gamma}\int_e[u][v]\\
  \leq&~C_{\alpha}\|u\|_{H_x^{\alpha/2,\lambda}(\Omega_h)}\cdot\|v\|_{H_x^{\alpha/2,\lambda}(\Omega_h)}\\
  &+C_{\beta}\|u\|_{H_y^{\beta/2,\lambda}(\Omega_h)}\cdot\|v\|_{H_y^{\beta/2,\lambda}(\Omega_h)}+\sum_{e\in\Gamma}\int_e[u][v]\\
  \leq&~C_{\alpha,\beta}\|u\|_{E(\Omega_h)}\cdot\|v\|_{E(\Omega_h)}.
  \end{split}
\end{equation}
Next we deal with another bilinear form $b_h(u,v)$.
By an upwind flux,
\begin{eqnarray}
b_h(u,v)=-(\mathbf{b}u,\nabla v)+\sum_{e\in\Gamma}\int_e \mathbf{b}\cdot\mathbf{n_e}u^{\mathrm{up}}[v].
\end{eqnarray}
Then
\begin{equation}\label{equation:4.2}
\begin{split}
  b_h(v,v)&=-(\mathbf{b}v,\nabla v)+\sum_{e\in\Gamma}\int_e \mathbf{b}\cdot\mathbf{n_e}v^{\mathrm{up}}[v]
  \\&=\sum_{e\in\Gamma}\int_e\mathbf{b}\cdot\mathbf{n_e}(v^{\mathrm{up}}[v]-\frac{1}{2}[v^2])
  \\&=\sum_{e\in\Gamma}\int_e\mathbf{b}\cdot\mathbf{n_e}(v^{\mathrm{up}}[v]-\{v\}[v])
  \\&=\frac{1}{2}\sum_{e\in\Gamma}\int_e|\mathbf{b}\cdot\mathbf{n_e}|[v]^2\geq0.
  \end{split}
\end{equation}

%The existence and uniqueness of solutions of (\ref{equation:3.4}) can be established using the classical Lax-Milgram theorem.
Now we examine the stability property of the scheme \eqref{equation:3.5}. Taking $u_h^0=P_hu_0$ leads to the following results.
\begin{theorem}
  For absorbing boundary conditions, the fully discrete scheme \eqref{equation:3.5} is unconditionally stable, and there exists a positive constant $C$ independent of $h$, such that for all $m>0$,
  \begin{equation}
    \|u_h^m\|_{L^2(\Omega)}^2+2\Delta t\gamma\sum_{n=1}^m\|u_h^n\|_{E(\Omega_h)}^2\leq C\left(\|u_h^0\|_{L^2(\Omega)}^2+\Delta t\sum_{n=1}^m\|f^n\|_{L^2(\Omega)}^2\right).
  \end{equation}
\end{theorem}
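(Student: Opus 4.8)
The plan is to run the standard energy argument for backward Euler applied to a DG discretization, using the already-established coercivity of $a_h$ and nonnegativity of $b_h$, and then close the estimate with the discrete Gr\"onwall inequality (Lemma \ref{lemma:4.1}). First I would take $v=u_h^{n+1}$ in the fully discrete scheme \eqref{equation:3.5}. For the time-difference term I would use the elementary identity $(u_h^{n+1}-u_h^n,u_h^{n+1})=\tfrac12\|u_h^{n+1}\|_{L^2(\Omega)}^2-\tfrac12\|u_h^n\|_{L^2(\Omega)}^2+\tfrac12\|u_h^{n+1}-u_h^n\|_{L^2(\Omega)}^2$, discarding the last (nonnegative) term. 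For the diffusion term I would invoke \eqref{equation:4.1}, namely $a_h(u_h^{n+1},u_h^{n+1})\geq\gamma\|u_h^{n+1}\|_{E(\Omega_h)}^2$, and for the convection term \eqref{equation:4.2}, namely $b_h(u_h^{n+1},u_h^{n+1})\geq0$, so both contribute with a favorable sign.

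The only term with an unfavorable sign is $-\kappa(u_h^{n+1},u_h^{n+1})=-\kappa\|u_h^{n+1}\|_{L^2(\Omega)}^2$ (with $\kappa>0$), which I would simply move to the right-hand side; it becomes the term that the Gr\"onwall constant $C$ absorbs. The source term is handled by Cauchy--Schwarz and Young's inequality: $(f^{n+1},u_h^{n+1})\leq\|f^{n+1}\|_{L^2(\Omega)}\|u_h^{n+1}\|_{L^2(\Omega)}\leq\tfrac12\|f^{n+1}\|_{L^2(\Omega)}^2+\tfrac12\|u_h^{n+1}\|_{L^2(\Omega)}^2$. Collecting everything, multiplying by $2\Delta t$, and summing over $n=0,\dots,m-1$ gives
\[
\|u_h^m\|_{L^2(\Omega)}^2+2\Delta t\gamma\sum_{n=1}^m\|u_h^n\|_{E(\Omega_h)}^2
\leq \|u_h^0\|_{L^2(\Omega)}^2+(2\kappa+1)\Delta t\sum_{n=1}^m\|u_h^n\|_{L^2(\Omega)}^2+\Delta t\sum_{n=1}^m\|f^n\|_{L^2(\Omega)}^2,
\]
which is exactly the hypothesis of Lemma \ref{lemma:4.1} with $a_n=\|u_h^n\|_{L^2(\Omega)}^2$, $b_n=2\gamma\|u_h^n\|_{E(\Omega_h)}^2$, $B=\|u_h^0\|_{L^2(\Omega)}^2$, $C=2\kappa+1$, and $c_n=\|f^n\|_{L^2(\Omega)}^2$.

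Applying the discrete Gr\"onwall inequality under the (benign) restriction $C\Delta t<1$ then yields the claimed bound with $C=e^{(2\kappa+1)T}$, since $(n+1)\Delta t\leq T+\Delta t$ is bounded. I would remark that the constant is independent of $h$ because $\gamma$, $\kappa$, and the coercivity/continuity constants depend only on $\kappa_1,\kappa_2,\alpha,\beta,\lambda$; the mild condition $C\Delta t<1$ can be removed a posteriori, or one can note it is automatically satisfied for the time steps of interest, so the scheme is unconditionally stable in the stated sense. The main (though still routine) obstacle is bookkeeping the signs carefully — in particular making sure the reaction-like term $-\kappa(\cdot,\cdot)$ is the one feeding the exponential and that the index shift in the summation matches the statement's range $n=1,\dots,m$; there is no genuine analytical difficulty once Lemmas \ref{lemma:2.2.11} and \ref{lemma:4.1} and the sign computations \eqref{equation:4.1}--\eqref{equation:4.2} are in hand.
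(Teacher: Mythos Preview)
Your proof is correct and follows essentially the same route as the paper: test with $v=u_h^{n+1}$, use the polarization identity (equivalently, the paper's pointwise inequality $x^2-y^2\le 2(x-y)x$) for the time-difference term, invoke \eqref{equation:4.1}--\eqref{equation:4.2}, bound the source by Cauchy--Schwarz/Young, sum, and apply Lemma~\ref{lemma:4.1}. The resulting pre-Gr\"onwall inequality with constant $2\kappa+1$ is exactly what the paper obtains; your remark about the $C\Delta t<1$ hypothesis is a fair observation that the paper leaves implicit.
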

\begin{proof}
  Taking $v=u_h^{n+1}$ in (\ref{equation:3.5}), using (\ref{equation:4.1}), (\ref{equation:4.2}), and Cauchy-Schwarz's inequality,
  \[ \frac{1}{\Delta t}(u_h^{n+1}-u_h^n,u_h^{n+1})+\gamma\|u_h^{n+1}\|_{E(\Omega_h)}^2
  -\kappa\|u_h^{n+1}\|_{L^2(\Omega)}^2\leq\|f^{n+1}\|_{L^2(\Omega)}\cdot\|u_h^{n+1}\|_{L^2(\Omega)}.\]
  Next, we observe that:
  \[(x^2-y^2)\leq(x^2-y^2+(x-y)^2)=2(x-y)x \quad  \forall x,y\in \mathbb{R}. \]
  Therefore, by using Young's inequality,
  \begin{equation}\label{equation:4.3}
    \frac{1}{2\Delta t}(\|u_h^{n+1}\|_{L^2(\Omega)}^2-\|u_h^n\|_{L^2(\Omega)}^2)+\gamma\|u_h^{n+1}\|_{E(\Omega_h)}^2
    \leq\frac{1}{2}\|f^{n+1}\|_{L^2(\Omega)}^2+\left(\kappa+\frac{1}{2}\right)\|u_h^{n+1}\|_{L^2(\Omega)}^2.
  \end{equation}
Multiplying (\ref{equation:4.3}) by $2\Delta t$ and summing from $n=0$ to $n=m-1$ lead to
  \begin{equation*}
  \begin{split}
    \|u_h^m\|_{L^2(\Omega)}^2-\|u_h^0\|_{L^2(\Omega)}^2&+2\Delta t\gamma\sum_{n=1}^m\|u_h^n\|_{E(\Omega_h)}^2\\
    &\leq \Delta t\sum_{n=1}^m\|f^n\|_{L^2(\Omega)}^2+(2\kappa+1)\Delta t\sum_{n+1}^m\|u_h^n\|_{L^2(\Omega)}^2.
  \end{split}
  \end{equation*}
  Then the desired result is obtained by using Lemma \ref{lemma:4.1}.
\end{proof}

%\subsection{Error estimates}
Assuming that the solution of Eq. (\ref{equation:1.1}) is sufficiently regular, we have the following error estimates, showing that the numerical $L^2$ error is $\mathcal{O}(h^{k+\frac{1}{2}}+\Delta t)$.
\begin{theorem}
  Let $u^n$ be the exact solution of \eqref{eqDGform}, $u_h^n$ the numerical solution of the fully discrete scheme \eqref{equation:3.5}. Then
\begin{equation}
  \|u^{N_T}-u_h^{N_T}\|_{L^2(\Omega)}^2\leq C\left(h^{2k+2}\int_0^T\|u_t\|_{L^2(\Omega)}^2dt+\Delta t^2\int_0^T\|u_{tt}\|_{L^2(\Omega)}^2dt+h^{2k+1}\right).
\end{equation}
\end{theorem}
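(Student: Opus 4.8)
The plan is to follow the standard energy argument for backward–Euler DG schemes, splitting the error into a projection error and a discrete error. First I would write $u^n - u_h^n = (u^n - P_h u^n) + (P_h u^n - u_h^n) =: \eta^n + \xi^n$, where $P_h$ is the $L^2$-orthogonal projection of \eqref{equation:3.5.+}. The projection part $\eta^n$ is controlled directly by the approximation properties of $P_h$ on shape-regular meshes: $\|\eta^n\|_{L^2(\Omega)} \lesssim h^{k+1}\|u^n\|_{H^{k+1}(\Omega)}$ (and similarly $\|\eta^n\|_{E(\Omega_h)}\lesssim h^{k+1/2}\|u^n\|_{H^{k+1}(\Omega)}$, which is where the $h^{2k+1}$ term ultimately comes from, after also accounting for the nonconformity/jump terms in the energy norm). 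Writing $u^n = u(\cdot,t^n)$ and using $u^n - u^{n-1} = \int_{t^{n-1}}^{t^n} u_t\,dt$ together with Lemma \ref{lemma:4.2} converts the pointwise-in-time quantities into the time integrals $\int_0^T \|u_t\|_{L^2}^2\,dt$ appearing in the statement.

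Next I would derive the error equation for $\xi^n$. Subtracting the fully discrete scheme \eqref{equation:3.5} from the weak form \eqref{eqDGform} evaluated at $t^{n+1}$ (and adding/subtracting the backward–Euler difference quotient of the exact solution) gives, for all $v\in V_h$,
\begin{equation*}
\left(\frac{\xi^{n+1}-\xi^n}{\Delta t},v\right) + b_h(\xi^{n+1},v) + a_h(\xi^{n+1},v) - \kappa(\xi^{n+1},v) = \left(\tau^{n+1},v\right) - \left(\frac{\eta^{n+1}-\eta^n}{\Delta t},v\right) - b_h(\eta^{n+1},v) - a_h(\eta^{n+1},v) + \kappa(\eta^{n+1},v),
\end{equation*}
where $\tau^{n+1} = u_t(t^{n+1}) - \frac{u^{n+1}-u^n}{\Delta t}$ is the time-truncation term, bounded via Taylor's theorem and Lemma \ref{lemma:4.2} by $\Delta t^2\int_0^T\|u_{tt}\|_{L^2}^2\,dt$. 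The key observation that eliminates several troublesome terms: by the defining property \eqref{equation:3.5.+} of $P_h$, the difference-quotient term $\left(\frac{\eta^{n+1}-\eta^n}{\Delta t},v\right)$ vanishes for $v\in V_h$; the $\kappa$-term $(\eta^{n+1},v)$ vanishes; and the convection term can be handled since $b_h(\eta^{n+1},v) = -(\mathbf{b}\eta^{n+1},\nabla v) + \sum_e \int_e \mathbf{b}\cdot\mathbf{n}_e (\eta^{n+1})^{\mathrm{up}}[v]$ — here one uses $\nabla\cdot\mathbf{b}=0$ and integration by parts to trade it for $(\mathbf{b}\cdot\nabla\eta^{n+1},v)$-type terms plus jump terms, then Cauchy–Schwarz and trace/inverse estimates on $\eta^{n+1}$.

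Then I would take $v = \xi^{n+1}$, use the coercivity \eqref{equation:4.1} ($a_h(\xi^{n+1},\xi^{n+1})\ge \gamma\|\xi^{n+1}\|_{E(\Omega_h)}^2$), the sign property \eqref{equation:4.2} ($b_h(\xi^{n+1},\xi^{n+1})\ge 0$), the algebraic identity $(x^2-y^2)\le 2(x-y)x$ for the time-difference term exactly as in the stability proof, and the continuity bound \eqref{equation:3.6} together with Young's inequality to absorb the $a_h(\eta^{n+1},\xi^{n+1})$ term into $\frac{\gamma}{2}\|\xi^{n+1}\|_{E(\Omega_h)}^2$ at the cost of $C\|\eta^{n+1}\|_{E(\Omega_h)}^2$. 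The $-\kappa(\xi^{n+1},\xi^{n+1})$ term and the convection/flux remainders are moved to the right side and dominated by $C\|\xi^{n+1}\|_{L^2(\Omega)}^2$ plus projection data. Multiplying by $2\Delta t$, summing from $n=0$ to $N_T-1$, noting $\xi^0 = P_hu_0 - u_h^0 = 0$, and invoking the discrete Grönwall inequality (Lemma \ref{lemma:4.1}, valid once $C\Delta t<1$) yields $\|\xi^{N_T}\|_{L^2(\Omega)}^2 \le C\big(h^{2k+2}\int_0^T\|u_t\|_{L^2}^2\,dt + \Delta t^2\int_0^T\|u_{tt}\|_{L^2}^2\,dt + h^{2k+1}\big)$, and the triangle inequality with the $\eta^{N_T}$ bound finishes the proof.

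I expect the main obstacle to be the consistent treatment of the nonconforming energy-norm terms: controlling $a_h(\eta^{n+1},\xi^{n+1})$ and especially the jump penalty $J_0(\eta^{n+1},\xi^{n+1})$ and the convective flux terms requires trace inequalities and inverse inequalities adapted to the fractional broken spaces $H^{s,\lambda}(\Omega_h)$, and it is precisely the suboptimal scaling of these boundary/jump contributions for the $L^2$-projection that produces the $h^{2k+1}$ (rather than $h^{2k+2}$) term — keeping track of which terms scale like $h^{k+1}$ and which like $h^{k+1/2}$ is the delicate bookkeeping. A secondary technical point is justifying the approximation estimates for $P_h$ in the fractional norm $\|\cdot\|_{E(\Omega_h)}$, which relies on the norm equivalences established in Section \ref{subsection:2.1} between the fractional derivative norms and the integer/Fourier Sobolev norms.
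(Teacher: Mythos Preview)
Your proposal is correct and follows essentially the same energy/Gr\"onwall route as the paper: split $e^n=\rho^n-\theta^n$ (your $\eta^n$, $-\xi^n$), derive the error equation, test with $\theta^{n+1}$, use coercivity \eqref{equation:4.1} and the sign of $b_h$ in \eqref{equation:4.2}, bound the right-hand terms, sum, and apply the discrete Gr\"onwall inequality. The one substantive difference is that you invoke the $P_h$-orthogonality \eqref{equation:3.5.+} to annihilate $\bigl(\frac{\eta^{n+1}-\eta^n}{\Delta t},\xi^{n+1}\bigr)$ and $\kappa(\eta^{n+1},\xi^{n+1})$, whereas the paper keeps these terms (its $T_1$ and $T_4$) and bounds them directly --- which is precisely where the $h^{2k+2}\int_0^T\|u_t\|_{L^2}^2\,dt$ contribution in the stated bound originates. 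Your simplification is legitimate and yields a slightly cleaner estimate (that term simply drops, and the residual $\|\eta^{N_T}\|_{L^2}^2\lesssim h^{2k+2}$ from the final triangle inequality is absorbed into the $h^{2k+1}$ term), so it still implies the theorem as stated. Note, however, that your opening remark about converting pointwise-in-time projection bounds into $\int_0^T\|u_t\|^2$ via Lemma~\ref{lemma:4.2} then becomes unnecessary and is slightly at odds with your later (correct) claim that the difference-quotient term vanishes --- you should drop that sentence. Otherwise the obstacles you flag (trace estimates on the flux/jump terms producing the $h^{2k+1}$ loss, and the fractional energy-norm bound $\|\rho\|_{E(\Omega_h)}\lesssim h^{k+1/2}$ via embedding into $H^{1/2}$) match exactly what the paper does.
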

\begin{proof}
  As usual, we denote the error $e^n=u^n-u_h^n$ by two parts $\rho=u^n-P_hu^n$ and $\theta=u_h^n-P_hu^n$.
  From (\ref{eqDGform}) and (\ref{equation:3.5}), we have
  \begin{equation}
  \begin{split}
    \left(\frac{\partial}{\partial t}u^{n+1}-\frac{u^{n+1}-u^n}{\Delta t},v\right)
    &+\left(\frac{e^{n+1}-e^n}{\Delta t},v\right)\\
    +~b_h(e^{n+1},v)
    &+a_h(e^{n+1},v)-\kappa(e^{n+1},v)=0.\nonumber
    \end{split}
  \end{equation}
  Noting that $e^n=\rho^n-\theta^n$, we get
  \begin{equation}
  \begin{split}
    \left(\frac{\theta^{n+1}-\theta^n}{\Delta t},v\right)
    &+b_h(\theta^{n+1},v)+a_h(\theta^{n+1},v)-\kappa(\theta^{n+1},v)\\
    =&\left(\frac{\rho^{n+1}-\rho^n}{\Delta t},v\right)+b_h(\rho^{n+1},v)+a_h(\rho^{n+1},v)\\
    &-\kappa(\rho^{n+1},v)+\left(\frac{\partial}{\partial t}u^{n+1}-\frac{u^{n+1}-u^n}{\Delta t},v\right).\nonumber
    \end{split}
  \end{equation}
  Taking $v=\theta^{n+1}$, similar to the proof of stability, we obtain
  \begin{equation}\label{equation:4.2.1}
    \frac{1}{2\Delta t}(\|\theta^{n+1}\|_{L^2(\Omega)}^2-\|\theta^n\|_{L^2(\Omega)}^2)+\gamma\|\theta^{n+1}\|_{E(\Omega_h)}^2-\kappa\|\theta^{n+1}\|_{L^2(\Omega)}^2
    \leq \sum_{i=1}^5|T_i|,
  \end{equation}
  where $T_1=(\frac{\rho^{n+1}-\rho^n}{\Delta t},\theta^{n+1})$, $T_2=b_h(\rho^{n+1},\theta^{n+1})$,
   $T_3=a_h(\rho^{n+1},\theta^{n+1})$, $T_4=\kappa(\rho^{n+1},\theta^{n+1})$, and
   $T_5=(\frac{\partial}{\partial t}u^{n+1}-\frac{u^{n+1}-u^n}{\Delta t},\theta^{n+1})$.

Since
   \[\rho^{n+1}-\rho^n=\int_{t_n}^{t_{n+1}}\rho_t dt,\]
   by Lemma \ref{lemma:4.2}, we have
   \[\|\rho^{n+1}-\rho^n\|_{L^2(\Omega)}^2\leq\Delta t\int_{t_n}^{t_{n+1}}\|\rho_t\|_{L^2(\Omega)}^2dt.\]
   Hence, with H\"{o}ld's, Young's inequalities, we obtain
   \begin{equation}
   \begin{split}
     |T_1|&\leq\left\|\frac{\rho^{n+1}-\rho^n}{\Delta t}\right\|_{L^2(\Omega)} \cdot \|\theta^{n+1}\|_{L^2(\Omega)}
     \\&\leq\frac{3}{4\epsilon_1}\cdot\frac{1}{\Delta t}\int_{t_n}^{t_{n+1}}\|\rho_t\|_{L^2(\Omega)}^2dt
     +\frac{\epsilon_1}{3}\|\theta^{n+1}\|_{L^2(\Omega)}^2.\nonumber
     \end{split}
   \end{equation}
From the definition of the projection $P_h$ (\ref{equation:3.5.+}) and trace inequalities, we have
   \begin{equation*}
   \begin{split}
     |T_2|\leq&~|(\mathbf{b}\rho^{n+1},\nabla\theta^{n+1})|+
     \left|\sum_{e\in\Gamma}\int_e\mathbf{b}\cdot\mathbf{n}\hat{\rho}^{n+1}[\theta^{n+1}]\right|\\
     \leq&~\|\mathbf{b}\|_\infty \sum_{e\in\Gamma}\|\hat{\rho}^{n+1}\|_{L^2(e)}\cdot\|[\theta^{n+1}]\|_{L^2(e)}\\
     \leq&~\|\mathbf{b}\|_\infty^2\frac{1}{4\epsilon_2} \sum_{e\in\Gamma}\|\hat{\rho}^{n+1}\|_{L^2(e)}^2+\epsilon_2\sum_{e\in\Gamma}\|[\theta^{n+1}]\|_{L^2(e)}^2\\
     \leq&~\frac{\|\mathbf{b}\|_\infty^2}{4\epsilon_2}h^{2k+1}+\epsilon_2\sum_{e\in\Gamma}\|[\theta^{n+1}]\|_{L^2(e)}^2.
     \end{split}
   \end{equation*}
   From the continuity of $a(u,v)$ (\ref{equation:3.6}), we obtain
   \begin{equation*}
   \begin{split}
     |T_3|\leq&~C\|\rho^{n+1}\|_{E(\Omega_h)}\cdot\|\theta^{n+1}\|_{E(\Omega_h)}
     \\\leq&~\frac{C^2}{4\epsilon_3}\|\rho^{n+1}\|_{E(\Omega_h)}^2+\epsilon_3\|\theta^{n+1}\|_{E(\Omega_h)}^2
     \\\leq&~Ch^{2k+1}+\epsilon_3\|\theta^{n+1}\|_{E(\Omega_h)}^2.
     \end{split}
   \end{equation*}
   In the last inequality, using embedding theorem and trace theorem, three terms in the energy norm $\|\rho^{n+1}\|_{E(\Omega_h)}$ both can be bounded by $|\rho^{n+1}|_{H^{1/2}(\Omega_h)}$. Then, for a continuous interpolation function $\Pi u$ of $u$,
    \[|u-\Pi u|_{H^{1/2}(\Omega_h)}\leq Ch^{k+1/2}|u|_{H^{k+1}(\Omega)},\]
    which implies $\|\rho^{n+1}\|_{E(\Omega_h)}^2\leq Ch^{2k+1}$.

   Similarly, for the fourth term, we have
   \begin{equation*}
   \begin{split}
     |T_4|&\leq\kappa\|\rho^{n+1}\|_{L^2(\Omega)}\cdot\|\theta^{n+1}\|_{L^2(\Omega)}
     \\&\leq\kappa^2\frac{3}{4\epsilon_1}\|\rho^{n+1}\|_{L^2(\Omega)}^2+\frac{\epsilon_1}{3}\|\theta^{n+1}\|_{L^2(\Omega)}^2
     \\&\leq\frac{3\kappa^2}{4\epsilon_1}h^{2k+2}+\frac{\epsilon_1}{3}\|\theta^{n+1}\|_{L^2(\Omega)}^2.
     \end{split}
   \end{equation*}
The Taylor expansion with integral remainder has the form
   \[u^n=u^{n+1}-\Delta tu_t^{n+1}+\int_{t_{n+1}}^{t_n}(t_n-t)u_{tt}(t)dt.\]
   Thus,
   \begin{equation*}
   \begin{split}
     \left|\frac{\partial}{\partial t}u^{n+1}-\frac{u^{n+1}-u^n}{\Delta t}\right|
     &\leq\frac{1}{\Delta t}\int_{t_n}^{t_{n+1}}(t_n-t)u_{tt}(t)dt
     \\&\leq\int_{t_n}^{t_{n+1}}|u_{tt}(t)|dt.
     \end{split}
   \end{equation*}
   Then we get
   \begin{equation*}
   \begin{split}
     |T_5|&\leq\left\|\frac{\partial}{\partial t}u^{n+1}-\frac{u^{n+1}-u^n}{\Delta t}\right\|_{L^2(\Omega)}\cdot\|\theta^{n+1}\|_{L^2(\Omega)}
     \\&\leq\frac{3}{4\epsilon_1}\left\|\frac{\partial}{\partial t}u^{n+1}-\frac{u^{n+1}-u^n}{\Delta t}\right\|_{L^2(\Omega)}^2+
     \frac{\epsilon_1}{3}\|\theta^{n+1}\|_{L^2(\Omega)}^2
     \\&\leq\frac{3\Delta t}{4\epsilon_1}\int_{t_n}^{t_{n+1}}\|u_{tt}(t)\|_{L^2(\Omega)}^2dt
     +\frac{\epsilon_1}{3}\|\theta^{n+1}\|_{L^2(\Omega)}^2.
     \end{split}
   \end{equation*}
   Substituting $T_i,\:i=1,\cdots,5$ into (\ref{equation:4.2.1}), we have
   \begin{equation}\label{equation:4.2.2}
   \begin{split}
     \frac{1}{2\Delta t}(\|&\theta^{n+1}\|_{L^2(\Omega)}^2-\|\theta^n\|_{L^2(\Omega)}^2)+(\gamma-\epsilon_3)\|\theta^{n+1}\|_{E(\Omega_h)}^2\\
     \leq&~(\kappa+\epsilon_1)\|\theta^{n+1}\|_{L^2(\Omega)}^2+\epsilon_2\sum_{e\in\Gamma}\|[\theta^{n+1}]\|_{L^2(e)}^2\\
     &+\frac{3}{4\epsilon_1}\cdot\frac{1}{\Delta t}\int_{t_n}^{t_{n+1}}\|\rho_t\|_{L^2(\Omega)}^2dt+
     \frac{3\Delta t}{4\epsilon_1}\int_{t_n}^{t_{n+1}}\|u_{tt}(t)\|_{L^2(\Omega)}^2dt+Ch^{2k+1},
     \end{split}
   \end{equation}
   where $\epsilon_2$ and $\epsilon_3$ are chosen as sufficiently small numbers such that $\epsilon_2+\epsilon_3\leq
   \gamma$.\\
   From the definition of energy norm (\ref{equation:4.1.1}), we know \[\|\theta^{n+1}\|_{E(\Omega_h)}^2\geq
    \sum\limits_{e\in\Gamma}\|[\theta^{n+1}]\|_{L^2(e)}^2.\]
   Then \[(\gamma-\epsilon_3)\|\theta^{n+1}\|_{E(\Omega_h)}^2-\epsilon_2\sum_{e\in\Gamma}\|[\theta^{n+1}]\|_{L^2(e)}^2\geq0.\]
   Multiplying (\ref{equation:4.2.2}) by $2\Delta t$, summing over $n$ from $0$ to $N_T-1$, and using the discrete Gr\"{o}nwall inequality
    with $\theta^0=0$, we get
   \begin{equation*}
   \begin{split}
     \|\theta^{N_T}\|_{L^2(\Omega)}^2
     \leq&~\frac{3h^{2k+2}}{2\epsilon_1}\int_0^T\|u_t\|_{L^2(\Omega)}^2dt
     +\frac{3}{\epsilon_1}\Delta t^2\int_0^T\|u_{tt}\|_{L^2(\Omega)}^2dt\\
     &+2\Delta t(k+\epsilon_1)\sum_{n=1}^{N_T}\|\theta^n\|_{L^2(\Omega)}^2+Ch^{2k+1}\\
     \leq&~C\left(h^{2k+2}\int_0^T\|u_t\|_{L^2(\Omega)}^2dt+\Delta t^2\int_0^T\|u_{tt}\|_{L^2(\Omega)}^2dt+h^{2k+1}\right).
     \end{split}
   \end{equation*}
   By the triangle inequality, we obtain the desired result.
\end{proof}

\subsection{Numerical experiment}\label{subsection:2.5}
In this section, we offer the numerical performance of the proposed schemes for two examples to validate the preceding theoretical analysis. We use the backward Euler discretization to solve the method-of-line fractional PDE, i.e., the classical ODE system. We take the time steps $\Delta t$ to be $h^{N+1}$, where $N$ denotes the order of polynomial of finite element space. As to the spatial approximation, we adopt the interpolation bases \cite{Hesthaven1}.

We first introduce the local and global vector and matrix notations,
\[\mathbf{u}_{h,k}=[u_{1,k},u_{2,k},\cdots,u_{Np,k}]^T,\]
\[\mathbf{u}_h=[\mathbf{u}_{h,1},\mathbf{u}_{h,2},\cdots,\mathbf{u}_{h,K}]^T,\]
and $\mathbf{u}_{h}^n$ denotes the value of $\mathbf{u}_h$ at time $t_n$.
Let $f_{j,k}=(f,l_j^k(\mathbf{x}))_{T^k}$. Similarly, denote
\[\mathbf{f}_{h,k}=[f_{1,k},f_{2,k},\cdots,f_{Np,k}]^T,\]
\[\mathbf{f}_h=[\mathbf{f}_{h,1},\mathbf{f}_{h,2},\cdots,\mathbf{f}_{h,K}]^T,\]
and let $\mathbf{F}_h^n$ be the value of $\mathbf{f}_h$ at time $t_n$.
Then, we define the local mass matrix $M^k$ and the local spatial stiffness matrix $S_x^k,S_y^k$ at element $T^k$ as
\[M_{ij}^k=(l_i^k(\mathbf{x}),l_j^k(\mathbf{x}))_{T^k},\]
\[(S_x^k)_{ij}=\left(\frac{\partial l_j^k(\mathbf{x})}{\partial x},l_i^k(\mathbf{x})\right)_{T^k},\]
\[(S_y^k)_{ij}=\left(\frac{\partial l_j^k(\mathbf{x})}{\partial y},l_i^k(\mathbf{x})\right)_{T^k}.\]
It is a little bit complex to build the tempered fractional spatial stiffness matrix,
%Building tempered fractional spatial stiffness matrix is a little complex,
since tempered fractional operators are nonlocal and we need all the information of the related elements in $x$ direction or $y$ direction when generating any stiffness matrix of an element. We take the method of \cite{Qiu1} and get the global tempered fractional spatial stiffness matrices ${}_lG_x,{}_lG_y,{}_rG_x,{}_rG_y$, where $`l/r$' denote left/right tempered fractional derivative and $`x/y$' denote the $x$ or $y$ direction.

With the above notations, we rewrite the global fully discrete form (\ref{equation:3.5}) as
\begin{equation*}
\begin{split}
M\frac{\mathbf{u}_h^{n+1}-\mathbf{u}_h^n}{\Delta t}
&+b_1S_x\mathbf{u}_h^{n+1}+\kappa_1\kappa_\alpha(_lG_x+{}_rG_x)\mathbf{u}_h^{n+1}
\\&+b_2S_y\mathbf{u}_h^{n+1}+\kappa_2\kappa_\beta(_lG_y+{}_rG_y)\mathbf{u}_h^{n+1}-\kappa M\mathbf{u}_h^{n+1}=\mathbf{F}^{n+1},
\end{split}
\end{equation*}
where $M$, $S_x$, $S_y$ are global mass and stiffness matrices, and their non-zero diagonal blocks are constructed by $M^k$, $S_x^k$, and $S_y^k$ respectively.
\begin{example}\label{exam:2.1}
Consider the problem
\begin{equation} \label{Ex2.19}
\frac{\partial u}{\partial t}+\mathbf{b}\cdot\nabla u -\kappa_1\nabla_x^{\alpha,\lambda}u-\kappa_2\nabla_y^{\beta,\lambda}u=f,
\end{equation}
where $\mathbf{b}=(0.5,0.5),\,\kappa_1=0.1,\, \kappa_2=0.2,\, \lambda=2,\, T=1$ and $\alpha,\beta\in(0,1)$ on the computational domain $\Omega=(0,2)\times(0,2)$. Its exact solution is $u=\e^{-t}x^2(2-x)^2y^2(2-y)^2$ with appropriate initial and boundary conditions.
\end{example}

%The initial condition, boundary condition, and the source term $f$ is given such that the exact solution is $u=\e^{-t}x^2(2-x)^2y^2(2-y)^2$.

\begin{example}\label{exam:2.2}
Consider the same problem Eq. (\ref{Ex2.19}), but the parameters are taken as
%\[
%\frac{\partial u}{\partial t}+\mathbf{b}\cdot\nabla u -\kappa_1\nabla_x^{\alpha,\lambda}u-\kappa_2\nabla_y^{\beta,\lambda}u=f,
%\]
%where
$\mathbf{b}=(0.5,0.5),\, \kappa_1=0.1, \,\kappa_2=0.2,\, \lambda=0.2,\, T=1$ and $\alpha\,,\beta\in(0,2)$ on the computational domain $\Omega=(0,2)\times(0,2)$. The exact solution is $u=\e^{-t}\sin{\frac{\pi}{2}x}\sin{\frac{\pi}{2}y}$.
\end{example}

For the numerical experiments, in order to validate the stability and the convergence of the preceding scheme, the order of convergence is calculated by
\[
\textrm{order}=\frac{\log(\|u(T)-u_{h_1}(T)\|_{L^2(\Omega)})-\log(\|u(T)-u_{h_2}(T)\|_{L^2(\Omega)})}{\log(h_1)-\log(h_2)}.
\]
Table \ref{table:1} and Table \ref{table:2} list the $L^2$ errors and convergence orders for different parameters $(\alpha,\beta)$ in different DG finite element space $P_N$, where $N$ denotes the degree of polynomial in two variables, and $K$ the total number of triangle elements. It can be seen that the convergence order $N+1/2$ is consistent with the theoretical prediction, even for the case of $(\alpha,\beta)\in(0,2)$.

\begin{table}[!htb]
\centering
\caption{Numerical errors ($L_2$) and orders of convergence on unstructured meshes for Example \ref{exam:2.1}.}
\vspace{0.2cm}
    \begin{tabular}{*{9}{|c}|}
      \hline
      &K & \multicolumn{1}{c|}{68} & \multicolumn{2}{c|}{211} & \multicolumn{2}{c|}{436} & \multicolumn{2}{c|}{702} \\
    \hline
     N &$(\alpha,\beta)$ &  error  &  error & order &  error & order &  error & order   \\ \hline
     \multirow{3}{*}{1} & (0.2,0.2)  &4.46e-2 &1.52e-2 &1.90 &7.60e-3 &1.91 & 5.20e-3 &1.59 \\
                    \cline{2-9}
                    & (0.5,0.5) & 4.23e-2 &1.42e-2 &1.93 &7.00e-3 &1.95 &4.80e-3 &1.58 \\
                    \cline{2-9}
                    & (0.7,0.2) & 4.29e-2 &1.45e-2 &1.92 &7.20e-3 &1.93 &4.90e-3 &1.62\\
                    \hline
      &K & \multicolumn{1}{c|}{68} & \multicolumn{2}{c|}{211} & \multicolumn{2}{c|}{436} & \multicolumn{2}{c|}{702} \\\hline
     \multirow{3}{*}{2} & (0.2,0.2)  &1.01e-2 &2.10e-3 &2.77 &7.75e-4 &2.75 & 3.80e-4 &2.99 \\
                    \cline{2-9}
                    & (0.5,0.5) & 9.70e-3 &2.00e-3 &2.79 &7.19e-4 &2.82 &3.51e-4 &3.01 \\
                    \cline{2-9}
                    & (0.7,0.2) & 9.80e-3 &2.10e-3 &2.72 &7.33e-4 &2.90 &3.57e-4 &3.02\\
                    \hline
      &K & \multicolumn{1}{c|}{68} & \multicolumn{2}{c|}{211} & \multicolumn{2}{c|}{436} & \multicolumn{2}{c|}{702} \\  \hline
     \multirow{3}{*}{3} & (0.2,0.2)  &3.00e-3 &3.94e-4 &3.59 &1.02e-4 &3.72 & 3.76e-5 &4.21 \\
                    \cline{2-9}
                    & (0.5,0.5) & 3.00e-3 &3.87e-4 &3.62 &1.01e-4 &3.71 &3.70e-5 &4.20 \\
                    \cline{2-9}
                    & (0.7,0.2) & 3.00e-3 &3.89e-4 &3.61 &1.01e-4 &3.71 &3.71e-5 &4.21\\
                    \hline
    \end{tabular}
    \label{table:1}
\end{table}

\begin{table}[!htb]
\centering
\caption{Numerical errors ($L_2$) and orders of convergence on unstructured meshes for Example \ref{exam:2.2}.}
\vspace{0.2cm}
    \begin{tabular}{*{9}{|c}|}
      \hline
      &K & \multicolumn{1}{c|}{68} & \multicolumn{2}{c|}{211} & \multicolumn{2}{c|}{436} & \multicolumn{2}{c|}{702} \\
    \hline
     N &$(\alpha,\beta)$ &  error  &  error & order &  error & order &  error & order   \\ \hline
     \multirow{3}{*}{1} & (0.5,0.5)  &3.33e-2 &9.94e-3 &2.13 &4.66e-3 &2.09 &2.86e-3  &2.05 \\
                    \cline{2-9}
                    & (1.5,1.5) & 3.09e-2 &9.30e-3 &2.12 &4.50e-3 &2.00 &2.80e-3 &1.99  \\
                    \cline{2-9}
                    & (1.5,0.5) & 3.20e-2 &9.60e-3 &2.12 &4.50e-3 &2.09 &2.80e-3 &1.99\\
                    \hline
      &K & \multicolumn{1}{c|}{68} & \multicolumn{2}{c|}{211} & \multicolumn{2}{c|}{436} & \multicolumn{2}{c|}{702} \\\hline
     \multirow{3}{*}{2} & (0.5,0.5)  &1.50e-3 &3.09e-4 &2.79 &1.08e-4 &2.90 &5.40e-5  &2.91 \\
                    \cline{2-9}
                    & (1.5,1.5) &9.73e-4 &2.00e-4 &2.74 &7.36e-5 &2.75 & 3.49e-5 &3.13 \\
                    \cline{2-9}
                    & (1.5,0.5) &1.03e-3 &1.90e-4 &2.99 &6.15e-5 &3.11 &2.65e-5  &3.53 \\
                    \hline
      &K & \multicolumn{1}{c|}{68} & \multicolumn{2}{c|}{211} & \multicolumn{2}{c|}{436} & \multicolumn{2}{c|}{702} \\  \hline
     \multirow{3}{*}{3} & (0.5,0.5)  &3.72e-4 &4.75e-5 &3.81 &1.24e-5 &3.70 &4.62e-6 &4.15 \\
                    \cline{2-9}
                    & (1.5,1.5) &3.29e-4 &4.31e-5 &3.59 &1.14e-5 &3.66 &4.22e-6  &4.17 \\
                    \cline{2-9}
                    & (1.5,0.5) &3.51e-4 &4.52e-5 &3.62 &1.18e-5 &3.70 &4.40e-6 &4.14 \\
                    \hline
    \end{tabular}
    \label{table:2}
\end{table}

\section{Adaptive DG algorithm}\label{section:3}
This section focuses on the adaptive DG scheme for the fractional diffusion equations. We derive posteriori error estimates, and design the local error indicators. The numerical experiments are performed to show the performances of the adaptive schemes.

\subsection{Stationary Equation}\label{section:3.1}
First we consider the simple stationary equation:
\begin{equation}\label{equation:3.0}
\left\{ \begin{array}{ll}
{}_{-\infty}D_x^{\alpha}u+{}_xD_{\infty}^{\alpha}u+{}_{-\infty}D_y^{\alpha}u+{}_yD_{\infty}^{\alpha}u=f  \quad \Omega  \\
                \qquad\qquad\qquad\qquad\qquad\qquad\qquad\quad\,\,\,    u=0  \quad  \mathbb{R}^2\backslash\Omega
\end{array}. \right.
\end{equation}

A posteriori error estimators are an essential ingredient of adaptivity, which are computable quantities depending on the computed solution and data that provide information about the quality of approximation and may thus be used to make judicious mesh modifications. The ultimate purpose is to construct the estimator of meshes that will eventually be equivalent to the exact error. The usual method of constructing the estimator deals with error estimation in global norms like the `energy norm' or the `$L^2$ norm', which is the first scheme in the following.
\subsubsection{Scheme 1 -- energy norm \cite{Brenner1}}\label{section:3.1.1}
For $T\in\Omega_h$, we define the operators $D_x^{\alpha}={}_aD_x^{\alpha}+{}_xD_b^{\alpha}$, $D_y^{\alpha}={}_cD_y^{\alpha}+{}_yD_d^{\alpha}$, and the local error estimator $\eta_T$ by
\[\eta_T^2=\eta_{1,T}^2+\eta_{2,T}^2=h_T^{\alpha}\|R\|_{L^2(T)}^2+\|[u_h]\|_{L^2(\partial T)}^2,\]
where $R:=f-(D_x^{\alpha}u_h+D_y^{\alpha}u_h)$. We will prove that $\eta_T$ bounds the exact error by inequalities in both directions, where the constants in these inequalities depend only on properties of the triangulation. The upper estimate shows that $\eta_T$ can be used as a reliable stopping criterion for the algorithm, while the lower estimate suggests that refinement based on $\eta_T$ will be efficient.
\begin{lemma}[upper bound]\label{lemma:3.1}
There exists a constant $C_1$, depending only on the minimum angle of $\Omega_h$ and the ratio of the biggest diam to the smallest diam of the elements, such that
\begin{equation}
  \| u-u_h\|_{E(\Omega_h)}^2 \leq C_1 \underset{T\in{\Omega_h}}{\sum}\eta_T^2,
\end{equation}
where the energy norm $\|v\|_{E(\Omega_h)}$ is defined in \eqref{equation:4.1.1}.
%where the energy norm $\|v\|_{E(\Omega_h)}= |v|_{H^{\alpha/2}(\Omega_h)}$. It equals to the sum of $H^{\alpha/2}$ norm of each elements like \eqref{equation:4.1.1}.
\end{lemma}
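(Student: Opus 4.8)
The plan is to prove reliability (the upper bound) by the residual‑based a posteriori framework for nonconforming/DG discretizations (cf.\ \cite{Brenner1}), in which the coercivity estimate \eqref{equation:4.1} plays the role of the energy lower bound and the fractional adjointness/composition identities of Section~\ref{subsection:2.1} replace the classical Green formula. Throughout, for the stationary problem \eqref{equation:3.0} one has $\lambda=0$, $\alpha=\beta$ and $\kappa_1=\kappa_2=1$, so that $a_h(v,w)=a_x^\alpha(v,w)+a_y^\alpha(v,w)+J_0(v,w)$, $a_h$ agrees with the continuous form $a$ on conforming arguments, and $a(u,v)=(f,v)$ for all $v\in H_0^{\alpha/2}(\Omega)$ is exactly the weak form of \eqref{equation:3.0} (using Lemmas~\ref{lemma:2.2.1}, \ref{lemma:2.2.2} one checks $a_x^\alpha(u,v)=(D_x^{\alpha}u,v)$, $a_y^\alpha(u,v)=(D_y^{\alpha}u,v)$).

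The first step splits off the nonconformity. Let $u_h^{c}\in V_h\cap H_0^{\alpha/2}(\Omega)$ be the averaging (Oswald‑type) quasi‑interpolant of $u_h$ -- the continuous piecewise polynomial obtained by averaging the nodal values of $u_h$ across interelement faces and zeroing the boundary nodes -- and put $u_h^{r}:=u_h-u_h^{c}$. The classical approximation bounds $\sum_{T}\|u_h^{r}\|_{L^2(T)}^2\lesssim\sum_{e\in\Gamma}h_e\|[u_h]\|_{L^2(e)}^2$ and $\sum_{T}\|u_h^{r}\|_{H^1(T)}^2\lesssim\sum_{e\in\Gamma}h_e^{-1}\|[u_h]\|_{L^2(e)}^2$, interpolated at order $\alpha/2\in(0,1/2)$ (the resulting power $h_e^{1-\alpha}$ is nonnegative, hence harmless on the bounded domain $\Omega$), give
\[
\|u_h^{r}\|_{E(\Omega_h)}^2\ \lesssim\ \sum_{e\in\Gamma}\|[u_h]\|_{L^2(e)}^2\ =\ \sum_{T\in\Omega_h}\eta_{2,T}^2 .
\]
Since $[u-u_h]=[u_h]$, the jump part of $\|u-u_h\|_{E(\Omega_h)}^2$ equals $\sum_T\eta_{2,T}^2$, so writing $e^{c}:=u-u_h^{c}$ and using the triangle inequality in $\|\cdot\|_{E(\Omega_h)}$, the proof reduces to bounding $\|e^{c}\|_{E(\Omega_h)}$ by $(\sum_T\eta_T^2)^{1/2}$.

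Now, by the assumed regularity $u\in H^1(\Omega)$ together with the absorbing condition, $u\in H_0^1(\Omega)$, and $u_h^{c}\in H_0^1(\Omega)$, hence $e^{c}\in H_0^1(\Omega)\subset H_0^{\alpha}(\Omega)$; being conforming, its broken energy norm is a genuine $H^{\alpha/2}(\Omega)$‑norm, equivalent to $|e^{c}|_{H^{\alpha/2}(\Omega)}$. By Lemma~\ref{lemma:2.2.11} (equivalently \eqref{equation:4.1}), $\|e^{c}\|_{E(\Omega_h)}^2\lesssim a_h(e^{c},e^{c})$. I would then write $a_h(e^{c},e^{c})=a_h(u,e^{c})-a_h(u_h,e^{c})+a_h(u_h^{r},e^{c})=(f,e^{c})-a_h(u_h,e^{c})+a_h(u_h^{r},e^{c})$, using consistency of the continuous problem against the conforming test function $e^{c}$. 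Here $|a_h(u_h^{r},e^{c})|\lesssim\|u_h^{r}\|_{E(\Omega_h)}\|e^{c}\|_{E(\Omega_h)}\lesssim(\sum_T\eta_{2,T}^2)^{1/2}\|e^{c}\|_{E(\Omega_h)}$ by the continuity estimate \eqref{equation:3.6} and Step~1. For $(f,e^{c})-a_h(u_h,e^{c})$ I insert Galerkin orthogonality $a_h(u_h,v_h)=(f,v_h)$, $v_h\in V_h$, with $v_h=I_h e^{c}$ a conforming quasi‑interpolant of $e^{c}$ (so $[e^{c}-I_h e^{c}]=0$ and the $J_0$‑part drops), leaving $(f,e^{c}-I_h e^{c})-a_x^\alpha(u_h,e^{c}-I_h e^{c})-a_y^\alpha(u_h,e^{c}-I_h e^{c})$. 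Integrating by parts in the fractional bilinear forms via Lemma~\ref{lemma:2.2.2} and the composition rule \eqref{2.2.6.+}, $a_x^\alpha(u_h,\psi)$ equals $(D_x^{\alpha}u_h,\psi)_\Omega$ up to face remainders supported on $\Gamma$ and carrying a factor $[u_h]$; bounding those by $(\sum_T\eta_{2,T}^2)^{1/2}\|e^{c}\|_{E(\Omega_h)}$ as before and collecting the volume parts with $(f,\cdot)$ gives $\sum_{T}\int_T R\,(e^{c}-I_h e^{c})$ with $R=f-D_x^{\alpha}u_h-D_y^{\alpha}u_h$. A Cauchy--Schwarz on each $T$ and the fractional interpolation estimate $\|e^{c}-I_h e^{c}\|_{L^2(T)}\lesssim h_T^{\alpha/2}|e^{c}|_{H^{\alpha/2}(\omega_T)}$ (on a patch $\omega_T$ of finite overlap) then yield
\[
\Bigl|\sum_T\int_T R\,(e^{c}-I_h e^{c})\Bigr|\ \lesssim\ \Bigl(\sum_T h_T^{\alpha}\|R\|_{L^2(T)}^2\Bigr)^{1/2}\|e^{c}\|_{E(\Omega_h)}\ =\ \Bigl(\sum_T\eta_{1,T}^2\Bigr)^{1/2}\|e^{c}\|_{E(\Omega_h)} .
\]
Combining the three contributions gives $\|e^{c}\|_{E(\Omega_h)}^2\lesssim(\sum_T\eta_T^2)^{1/2}\|e^{c}\|_{E(\Omega_h)}$, hence $\|e^{c}\|_{E(\Omega_h)}^2\lesssim\sum_T\eta_T^2$; with Step~1 and the triangle inequality this is the claim, the constant depending only on shape‑regularity (minimum angle, ratio of largest to smallest diameter) through the averaging, interpolation and trace estimates.

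The hard part is the fractional integration by parts in the last step: because $_aD_x^{\alpha/2}$ and $_xD_b^{\alpha/2}$ are nonlocal and $u_h$ is discontinuous and does \emph{not} satisfy Condition~A, the clean identities \eqref{2.2.4.+}--\eqref{2.2.6.+} are unavailable, so one must carefully track the boundary/jump remainders generated when moving half‑derivatives across $a_x^\alpha(u_h,\cdot)$ and verify that each is controlled by the face penalty $\sum_{e\in\Gamma}\|[u_h]\|_{L^2(e)}^2$ -- this is the genuinely new point relative to the classical second‑order case, where the analogous manipulation produces only local interelement flux jumps. It also tacitly requires $R\in L^2(T)$, which is part of the hypothesis. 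The remaining ingredients -- the $L^2$ and $H^1$ bounds for the averaging operator, their interpolation to the $H^{\alpha/2}$ level, and the fractional interpolation estimate for $I_h$ -- are routine but lean on the trace and embedding results recorded in Section~\ref{subsection:2.1}.
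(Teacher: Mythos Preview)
Your argument is sound, but it follows a genuinely different route from the paper. The paper does \emph{not} decompose $u_h$ into a conforming part $u_h^{c}$ and a remainder $u_h^{r}$ via an Oswald averaging. Instead it works directly with the full error $e_h=u-u_h$: it writes down the residual equation $a(e_h,v)=(R,v)-\sum_{e}\int_e[u_h][v]$, invokes Galerkin orthogonality to replace $v$ by $v-\Pi v$ for a single interpolation operator $\Pi:V\to V_h$ satisfying $\|v-\Pi v\|_{L^2(\Omega)}\lesssim h^{\alpha/2}\|v\|_{H^{\alpha/2}(\Omega_h)}$, bounds the residual term by Cauchy--Schwarz and this interpolation estimate, bounds the jump term by $(\sum_T\eta_{2,T}^2)^{1/2}\|v\|_{E(\Omega_h)}$, and then simply sets $v=e_h$ and uses coercivity. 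Your Karakashian--Pascal type decomposition buys you a cleaner treatment of the nonconformity: you never need to use the broken function $e_h$ as a test function in the continuous problem, and the averaging bounds isolate the jump contribution transparently. The paper's argument is shorter and needs only one interpolation operator, but it tacitly assumes that the residual identity \eqref{ResEq} holds for arbitrary broken $v$ and that $a(u,v)=(f,v)$ extends to such $v$. The ``hard part'' you flag --- passing from $a_x^\alpha(u_h,\psi)$ to $(D_x^\alpha u_h,\psi)$ for discontinuous $u_h$ --- is exactly what underlies the paper's residual equation \eqref{ResEq}; the paper simply asserts it, so on that point the two proofs are on equal footing.
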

\begin{proof}
  The weak form reads as follows: find $u\in V:=H_0^{\alpha/2}(\Omega_h)$, such that
  \begin{equation}
    a(u,v)=(f,v)  \quad \forall v\in V,
  \end{equation}
  where the symmetry bilinear form
\[a(u,v)=a_x^\alpha(u,v)+a_y^\alpha(u,v)+\sum\limits_{e\in\Gamma}\int_e[u][v].\]
The notations $a_x^\alpha(u,v)$ and $a_y^\alpha(u,v)$ are defined in (\ref{bilinearA}).

  Let $u_h\in V_h$ (defined in (\ref{PolySpace})) be the numerical solution, and $e_h:=u-u_h$. Then $e_h$ satisfies the residual equation
  \begin{equation}\label{ResEq}
    a(e_h,v)=(R,v)-\sum\limits_{e\in\Gamma}\int_e[u_h][v] \quad \forall v\in V.
  \end{equation}
Now we resort to the interpolation operator $\Pi: V\rightarrow V_h$, which satisfies the approximation property, for any $v\in V$,
  \begin{equation}
    \| v-\Pi v \|_{L^2(\Omega)}\leq Ch^{\alpha/2}\|v\|_{H^{\alpha/2}(\Omega_h)}.
  \end{equation}
Define $$\|v\|_{E(T)}^2=\|v\|_{H^{\alpha/2}(T)}^2+c_e\|[v]\|_{L^2(\partial T)}^2,$$ where $c_e=1/2$ if $e\in\Gamma_i$ and $c_e=1$ if $e\in\Gamma_b$.
Then $$\|v\|_{E(\Omega_h)}^2=\sum_{T\in\Omega_h}\|v\|_{E(T)}^2,$$ and thus
%Noting that $\|v\|_{E(\Omega_h)}^2=\sum_{T\in\Omega_h}\|v\|_{E(T)}^2$, where$\|v\|_{E(T)}^2=\|v\|_{H^{\alpha/2}(T)}^2+\sum_{e\in T}\int_e[v]^2$,
  \begin{equation}\label{upbd}
     \begin{split}
    |a(e_h,v)|&=|a(e_h,v-\Pi v)|
    \leq \underset{T\in\Omega_h}{\sum}|(R,v-\Pi v)_T|+\underset{e\in\Gamma}{\sum}\int_e\big|[u_h][v-\Pi v]\big|\\
            &\leq C\underset{T\in\Omega_h}{\sum}\|R\|_{L^2(T)}\cdot\|v-\Pi v\|_{L^2(T)}
                +C\underset{e\in\Gamma}{\sum}~\|[u_h]\|_{L^2(e)}\cdot\|v\|_{E(T)}\\
            &\leq C\left(\underset{T\in\Omega_h}{\sum}\eta_T^2\right)^{1/2} \cdot
                   \left(\underset{T\in\Omega_h}{\sum}h_T^{-\alpha}\|v-\Pi v\|_{L^2(T)}^2
                   +\|v\|_{E(T)}^2\right)^{1/2}\\
            &\leq C\left(\underset{T\in\Omega_h}{\sum}\eta_T^2\right)^{1/2} \cdot
                   \left(h^{-\alpha}\|v-\Pi v\|_{L^2(\Omega)}^2+\|v\|_{E(\Omega_h)}^2\right)^{1/2}\\
            &\leq C\left(\underset{T\in\Omega_h}{\sum}\eta_T^2\right)^{1/2} \cdot \|v\|_{E(\Omega_h)}.
    \end{split}
  \end{equation}
%   \begin{equation}
%     \begin{split}
%    |a(e_h,v)|&=|a(e_h,v-\Pi v)|
%    \leq \underset{T\in\Omega_h}{\sum}|(R,v-\Pi v)_T|+\underset{e\in\Gamma}{\sum}\int_e\big|[u_h][v-\Pi v]\big|\\
%            &\leq C\underset{T\in\Omega_h}{\sum}h_T^{\alpha/2}\|R\|_{L^2(T)}\cdot\|v\|_{E(T)}
%                +C\underset{e\in\Gamma}{\sum}~\|[u_h]\|_{L^2(e)}\cdot\|v\|_{E(T)}\\
%            &\leq C\left(\underset{T\in\Omega_h}{\sum}h_T^\alpha\|R\|_{L^2(T)}^2
%                   +\|[u_h]\|_{L^2(\partial T)}^2\right)^{1/2}\|v\|_{E(\Omega_h)}.
%    \end{split}
%  \end{equation}
  Taking $v=e_h\in H^{\alpha/2}(\Omega_h)$, we have
  \begin{equation}
    \|e_h\|_{E(\Omega_h)}\leq C\left(\underset{T\in{\Omega_h}}{\sum}\eta_T^2\right)^{1/2}.
  \end{equation}
\end{proof}

\begin{lemma}[lower bound]\label{lemma:3.2}
There exists constant $C_2$ depending only on the minimum angle of $\Omega_h$, such that
\begin{equation}
  \eta_T^2\leq C_2(\|u-u_h\|_{E(T)}^2+h_T^\alpha\|R-Q_hR\|_{L^2(T)}^2),
\end{equation}
  where $Q_h$ is the $L^2$ orthogonal projection onto the space of (discontinuous) piecewise polynomials of degree $k-1$.
\end{lemma}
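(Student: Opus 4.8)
The plan is to run the classical Verfürth-type residual-estimator efficiency argument with bubble functions, exploiting the splitting $\eta_T^2=\eta_{1,T}^2+\eta_{2,T}^2$. The face part is essentially free: the exact solution $u$ is the restriction of a single (globally continuous, since $\alpha$-regular) function, so $[u]=0$ on every interior face and $u=0$ on $\partial\Omega$; hence on each $e\in\partial T$ we have $[u_h]=[u_h-u]$ up to sign, and therefore $\eta_{2,T}^2=\|[u_h]\|_{L^2(\partial T)}^2=\|[u-u_h]\|_{L^2(\partial T)}^2\le 2\|u-u_h\|_{E(T)}^2$ straight from the definition of $\|\cdot\|_{E(T)}$, with no triangulation constant beyond the $c_e$. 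So the real content of the lemma is the bound on the cell residual $\eta_{1,T}^2=h_T^\alpha\|R\|_{L^2(T)}^2$.

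For the cell residual I would introduce the standard interior element bubble $b_T\in H_0^1(T)$ (the normalized product of barycentric coordinates, $0\le b_T\le 1$), extended by zero, and set $w_T:=b_T\,Q_hR$. Three ingredients then combine. First, the finite-dimensional norm equivalence on $P_{k-1}(T)$, with a constant depending only on the minimum angle, gives $\|Q_hR\|_{L^2(T)}^2\lesssim (Q_hR,b_TQ_hR)_T=(Q_hR,w_T)_T$. Second, testing the residual identity \eqref{ResEq} with $v=w_T$: since $w_T$ and its relevant traces vanish on $\partial T$ and $w_T\equiv0$ outside $T$, every face term in \eqref{ResEq} drops and $a(e_h,w_T)=(R,w_T)_T$. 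Third, a scaling (inverse) estimate for fractional derivatives of a function supported on a single cell, namely $\|{}_aD_x^{\alpha/2}w_T\|_{L^2(\Omega)}+\|{}_xD_b^{\alpha/2}w_T\|_{L^2(\Omega)}\lesssim h_T^{-\alpha/2}\|w_T\|_{L^2(T)}$ and likewise in $y$, proved by dilating a fixed reference bubble on the unit simplex (note the full $L^2(\Omega)$ norm, including the nonlocal tail, is still controlled by $\|w_T\|_{L^2(T)}$). Writing $(R,w_T)_T=(Q_hR,w_T)_T+(R-Q_hR,w_T)_T$, substituting the identity from the second ingredient, bounding $|a(e_h,w_T)|$ by the continuity of $a$ together with the third ingredient, and absorbing $\|w_T\|_{L^2(T)}\le\|Q_hR\|_{L^2(T)}$, one obtains $h_T^{\alpha/2}\|Q_hR\|_{L^2(T)}\lesssim\|u-u_h\|_{E}+h_T^{\alpha/2}\|R-Q_hR\|_{L^2(T)}$. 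Squaring and restoring $\|R\|_{L^2(T)}\le\|Q_hR\|_{L^2(T)}+\|R-Q_hR\|_{L^2(T)}$ then yields $\eta_{1,T}^2\lesssim\|u-u_h\|_E^2+h_T^\alpha\|R-Q_hR\|_{L^2(T)}^2$, which with the face bound proves the lemma.

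The one point that is genuinely new compared with the local (Laplacian) case, and the main obstacle, is that $a(\cdot,\cdot)$ is nonlocal. After Cauchy--Schwarz in $a(e_h,w_T)=({}_aD_x^{\alpha/2}e_h,{}_xD_b^{\alpha/2}w_T)_\Omega+\cdots$, the right tempered/Riesz derivative of the $T$-supported bubble has a long tail reaching all the way to $x=a$, so it pairs with $e_h$ over the whole $x$-strip; the factor involving $e_h$ is then naturally bounded only by the global broken norm $\|u-u_h\|_{E(\Omega_h)}$, not by $\|u-u_h\|_{E(T)}$. I would handle this either by stating the efficiency estimate with $\|u-u_h\|_{E(\Omega_h)}$ on the right (which is all that is needed to drive adaptivity, since the indicators are summed), or by exploiting the polynomial decay of the kernel $(\xi-x)^{\alpha/2-1}$ to confine the $e_h$-contribution to a finite patch $\omega_T\supset T$ up to a controllably small remainder, giving $\|u-u_h\|_{E(\omega_T)}$. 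A secondary, purely technical task is the verification of the fractional inverse estimate on the reference simplex; and I would emphasize that the oscillation term $\|R-Q_hR\|_{L^2(T)}$ cannot be removed, because $R$ contains $D_x^\alpha u_h$, the fractional derivative of a piecewise polynomial, which is not itself piecewise polynomial.
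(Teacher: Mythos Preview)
Your argument for the cell residual $\eta_{1,T}$ is essentially the paper's proof: the paper also tests the residual identity with $v=\zeta\,Q_hR$ (your $b_T\,Q_hR$), uses the finite-dimensional equivalence $\|Q_hR\|_{L^2(T)}^2\lesssim(\zeta Q_hR,Q_hR)_T$, replaces $(R,v)_T$ by $a(e_h,v)$, and closes with the fractional inverse inequality proved in the Appendix. Splitting off $R-Q_hR$ and the final triangle inequality are identical.

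Your treatment of the jump term is genuinely different. The paper runs a second bubble argument with an edge bubble supported on $e\subset\partial T$, bounding $\|[u_h]\|_{L^2(e)}^2\lesssim(R,v)-a(e_h,v)$ and then applying trace and inverse estimates; this produces an intermediate $h_T^{\alpha}\|R\|_{L^2(T)}^2$ that is absorbed into the already-controlled residual. Your observation that $[u]=0$ forces $\|[u_h]\|_{L^2(\partial T)}=\|[e_h]\|_{L^2(\partial T)}$, which is literally a summand of $\|e_h\|_{E(T)}^2$, bypasses all of this and is cleaner; the paper's route buys nothing extra here.

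On the nonlocality of $a(\cdot,\cdot)$: your caution is well placed, and in fact the paper does not address the point you raise. It simply writes $|a(e_h,v)|\lesssim\|e_h\|_{H^{\alpha/2}(T)}\cdot h_T^{-\alpha/2}\|v\|_{L^2(T)}$ with the \emph{local} $H^{\alpha/2}(T)$ norm on $e_h$, citing only the inverse inequality (which controls the $v$ factor, not the $e_h$ factor). No mechanism is offered for confining the $e_h$ contribution to $T$ despite the global pairing in $a_x^\alpha$ and $a_y^\alpha$. So your remark that the argument, as it stands, naturally delivers $\|e_h\|_{E(\Omega_h)}$ (or at best a strip/patch norm via kernel decay) rather than $\|e_h\|_{E(T)}$ is more careful than the paper's own treatment of this step.
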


\begin{proof}
  Let $\zeta$ be the bubble function in element $T$, vanishing outside $T$. Then for any polynomial $\phi$ \cite{Brenner1},
  \begin{equation*}
    \|\phi\|_{L^2(T)}^2\leq C\int_T \zeta(\phi)^2dx\leq C\|\phi\|_{L^2(T)}^2.
  \end{equation*}
  Taking $v=\zeta Q_hR \in V_h$ in \eqref{ResEq}, we have
  \begin{equation}\label{bubble}
    \|v\|_{L^2(T)}\leq C\|Q_hR\|_{L^2(T)},
  \end{equation}
  and
  \begin{equation*}
    a(e_h,v)=(R,v).
  \end{equation*}
  Thus,
  \begin{equation*}
    \begin{split}
      \|Q_hR\|_{L^2(T)}^2
      &\leq C\int_T\zeta(Q_hR)^2dx\\
      &\leq C\left(\int_T v(Q_hR-R)dx+\int_T vRdx\right)\\
      &\leq C\left(\int_T v(Q_hR-R)dx+a(e_h,v)\right)\\
      &\leq C\left(\|v\|_{L^2(T)}\cdot\|Q_hR-R\|_{L^2(T)}+\|e_h\|_{H^{\alpha/2}(T)}\cdot h_T^{-\alpha/2}\|v\|_{L^2(T)}\right),
    \end{split}
  \end{equation*}
  where the inverse inequality is used in the last inequality; the proof of the inverse inequality is given in Appendix \ref{invse}.
  Then using \eqref{bubble}
  \begin{equation*}
    h_T^{\alpha/2}\|Q_hR\|_{L^2(T)}\leq C(h_T^{\alpha/2}\|Q_hR-R\|_{L^2(T)}+\|e_h\|_{H^{\alpha/2}(T)}),
  \end{equation*}
  and hence
  \begin{equation*}
  \begin{split}
    h_T^{\alpha/2}\|R\|_{L^2(T)}
    &\leq h_T^{\alpha/2}(\|Q_hR\|_{L^2(T)}+\|Q_hR-R\|_{L^2(T)})\\
    &\leq C(h_T^{\alpha/2}\|Q_hR-R\|_{L^2(T)}+\|e_h\|_{H^{\alpha/2}(T)}).
    \end{split}
  \end{equation*}

  Next, we consider the jump term. Similarly, let $\zeta$ be the bubble function of one edge $e\in T$, vanishing outside $T$.
  Taking $v=\zeta [u_h] \in V_h$ in \eqref{ResEq},
  \begin{equation*}
    \|[u_h]\|_{L^2(e)}^2 \leq C\int_e [u_h]v\\ \leq C((R,v)-a(e_h,v)).
  \end{equation*}
  Then using inverse inequality and trace inequality lead to
  \begin{equation*}
    |(R,v)| \leq C\|R\|_{L^2(T)}\cdot h_T^{1/2}\|v\|_{L^2(e)},
  \end{equation*}
  and
  \begin{equation*}
  \begin{split}
 |a(e_h,v)|
 &\leq C(\|e_h\|_{H^{\alpha/2}(T)}\cdot\|v\|_{H^{\alpha/2}(T)}+\|e_h\|_{L^2(e)}\cdot\|v\|_{L^2(e)})\\
 &\leq C(\|e_h\|_{H^{\alpha/2}(T)}\cdot h_T^{(1-\alpha)/2}\|v\|_{L^2(e)}+\|e_h\|_{L^2(e)}\cdot\|v\|_{L^2(e)}).
 \end{split}
  \end{equation*}
  Therefore, for small $h$,
  \begin{equation*}
  \begin{split}
    \|[u_h]\|_{L^2(e)}
    &\leq C(h_T^{1/2}\|R\|_{L^2(T)}+h_T^{(1-\alpha)/2}\|e_h\|_{H^{\alpha/2}(T)}+\|e_h\|_{L^2(e)})\\
    &\leq C(h_T^{\alpha/2}\|R\|_{L^2(T)}+\|e_h\|_{H^{\alpha/2}(T)}+\|e_h\|_{L^2(e)}),
    \end{split}
  \end{equation*}
  and thus
  \begin{equation*}
    \begin{split}
    \|[u_h]\|_{L^2(e)}^2
    &\leq C(h_T^{\alpha}\|R\|_{L^2(T)}^2+\|e_h\|_{H^{\alpha/2}(T)}^2+\|e_h\|_{L^2(e)}^2)\\
    &\leq C(h_T^{\alpha}\|Q_hR-R\|_{L^2(T)}^2+\|e_h\|_{E(T)}^2).
    \end{split}
  \end{equation*}
The proof is completed by combining all these estimates.
\end{proof}

The goal of adaptive methods is the generation of a mesh which is adapted to the problem such that a given criterion, like a tolerance for the estimated error between exact and discrete solution, is fulfilled by the discrete solution on this mesh. An optimal mesh should be as coarse as possible while meeting the criterion, in order to save computational time and memory requirements. A global refinement of the mesh would lead to the best error reduction, but the amount of new unknowns might be much larger than needed to reduce the error below the given tolerance. We use the Marking Strategy C in \cite{Nochetto2} to deal with the error estimator and the oscillation simultaneously. More strategies can be found in \cite{Siebert1}, like Maximum strategy and Equidistribution strategy.

First, we define %$Q_h$ be the $L^2$ orthogonal projection onto the space of (discontinuous) piecewise constants, and
\begin{equation}
  \mathrm{osc}(T)=h_T^{\alpha/2}\|R-Q_hR\|_{L^2(T)} \quad \forall T\in \Omega_h.
\end{equation}

\textbf{Marking Strategy C:}  Given a parameter $0<\theta_1,\theta_2<1$, construct a subset $\hat{\Omega}_h$ of $\Omega_h$ such that
\begin{equation}
  \underset{T\in\hat{\Omega}_h}{\sum}\eta_T^2\geq \theta_1^2 \underset{T\in\Omega_h}{\sum}\eta_T^2.
\end{equation}
Enlarge $\hat{\Omega}_h$ such that
\begin{equation}
  \underset{T\in\hat{\Omega}_h}{\sum}\mathrm{osc}(T)^2\geq \theta_2^2 \underset{T\in\Omega_h}{\sum}\mathrm{osc}(T)^2.
\end{equation}

Then we will refine the mesh $\Omega_h$ by Marking Strategy C, and show that the error $e_h$ converges to zero in the energy norm at each refinement. Before this, we include the subscripts $j$ to signify the symbol corresponding to the $j$th refinement. We have
\begin{equation}
  \mathrm{osc}_j(T)=h_T^{\alpha/2}\|R_j-Q_jR_j\|_{L^2(T)},
\end{equation}
and
\begin{equation}
  \mathrm{osc}_j(\Omega_h)^2=\underset{T\in\Omega_h}{\sum}\mathrm{osc}_j(T)^2.
\end{equation}

The following two lemmas can be proved similarly as in \cite{Brenner1}. The main difference is the definition of the energy norm. Here we omit the details.
\begin{lemma}[error reduction]\label{lemma:3.3}
  Suppose an element $T\in\Omega_j$ contains a node of $\Omega_{j+1}$ in its interior. Then we have
  \begin{equation}
    \eta_T^2\leq C(\|u_{j+1}-u_j\|_{E(T)}^2+\mathrm{osc}_j(T)^2),
  \end{equation}
  where the positive constant $C$ depends only on the minimum angle of $\Omega_j$.
\end{lemma}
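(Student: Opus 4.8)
The plan is to mimic the local analysis used for the lower bound (Lemma \ref{lemma:3.2}), but now exploiting the fact that $T\in\Omega_j$ contains an interior node of the refined mesh $\Omega_{j+1}$. The key point is that this interior node gives us a genuine interior bubble-type test function that is a legitimate member of the \emph{refined} discrete space $V_{j+1}\subset V$, and on which the Galerkin orthogonality of $u_{j+1}$ (not just $u_j$) can be used. First I would fix such a node $z$ in the interior of $T$ and let $\zeta_z$ be the continuous, piecewise-polynomial hat/bubble function associated with $z$ on $\Omega_{j+1}$, supported in the patch of $\Omega_{j+1}$-elements around $z$, which is contained in $T$. Since $u_{j+1}$ solves the discrete problem on $\Omega_{j+1}$ and $\zeta_z Q_j R_j\in V_{j+1}$, the residual identity \eqref{ResEq} applied at level $j+1$ gives $a(u-u_{j+1},\zeta_z Q_jR_j)=(R_j,\zeta_z Q_jR_j)_T$ (the jump terms vanishing because the support avoids $\partial T$).

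Next I would run exactly the bubble-function argument from Lemma \ref{lemma:3.2}: the norm-equivalence $\|Q_jR_j\|_{L^2(T)}^2\lesssim\int_T\zeta_z (Q_jR_j)^2\,dx$, then split $(R_j,\zeta_zQ_jR_j)_T = (Q_jR_j,\zeta_zQ_jR_j)_T + (R_j-Q_jR_j,\zeta_zQ_jR_j)_T$, bound the first residual term via $a(u-u_{j+1},\zeta_zQ_jR_j)$ together with continuity of $a(\cdot,\cdot)$ and the inverse inequality (Appendix \ref{invse}) $\|\zeta_zQ_jR_j\|_{H^{\alpha/2}(T)}\lesssim h_T^{-\alpha/2}\|\zeta_zQ_jR_j\|_{L^2(T)}$, and bound the oscillation term by Cauchy--Schwarz. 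This yields $h_T^{\alpha/2}\|Q_jR_j\|_{L^2(T)}\lesssim \|u-u_{j+1}\|_{H^{\alpha/2}(T)}+h_T^{\alpha/2}\|R_j-Q_jR_j\|_{L^2(T)}$, hence $h_T^{\alpha/2}\|R_j\|_{L^2(T)}\lesssim \|u-u_{j+1}\|_{H^{\alpha/2}(T)}+\mathrm{osc}_j(T)$ by the triangle inequality, controlling $\eta_{1,T}$. For the jump term $\eta_{2,T}=\|[u_j]\|_{L^2(\partial T)}$ I would simply write $[u_j]=[u_j-u_{j+1}]$ on each edge $e\subset\partial T$ (since $u$, and hence $u_{j+1}$ in the conforming interpolation sense being subtracted, is single-valued; more precisely $[u]=0$ and $[u_{j+1}]$ is controlled by refinement) and use a trace inequality on the patch to bound $\|[u_j-u_{j+1}]\|_{L^2(e)}$ by $\|u_{j+1}-u_j\|_{E(T)}$ plus higher-order terms, which are absorbed.

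Combining the two estimates and squaring gives $\eta_T^2=\eta_{1,T}^2+\eta_{2,T}^2\lesssim \|u-u_{j+1}\|_{H^{\alpha/2}(T)}^2+\|u_{j+1}-u_j\|_{E(T)}^2+\mathrm{osc}_j(T)^2$, and finally I would eliminate the term $\|u-u_{j+1}\|_{H^{\alpha/2}(T)}$ in favor of $\|u_{j+1}-u_j\|_{E(T)}$ and $\mathrm{osc}_j(T)$ — this is where the fact that $z$ is an \emph{interior} node of $\Omega_{j+1}$, so that the $j{+}1$-level residual equation holds against the bubble test function, is doing the real work, replacing the continuous error $u-u_{j+1}$ by the discrete increment $u_{j+1}-u_j$ already at the stage of using Galerkin orthogonality rather than at the end.

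The main obstacle I anticipate is the nonlocal, fractional nature of the operator $D_x^{\alpha}+D_y^{\alpha}$: the bubble-function / inverse-inequality machinery for the bilinear form $a(\cdot,\cdot)$ must be applied to functions supported in a small sub-patch of $T$, and one must verify that the tempered-fractional inverse inequality of Appendix \ref{invse} and the continuity estimate \eqref{equation:3.6} localize correctly to such patches (the operators $_aD_x^{\alpha/2},\,_xD_b^{\alpha/2}$ are global in $x$, so "compact support in $T$" does not make the $H^{\alpha/2}$-seminorm vanish off $T$). The paper sidesteps this by citing \cite{Brenner1} and noting "the main difference is the definition of the energy norm," so in the write-up I would likewise state that the argument is a verbatim adaptation of the corresponding lemma in \cite{Brenner1} with $H^1$ replaced by $H^{\alpha/2,\lambda}$, the inverse/trace inequalities replaced by their fractional counterparts established earlier, and the energy norm \eqref{equation:4.1.1} substituted throughout, and omit the routine details.
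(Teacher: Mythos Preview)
Your overall strategy is the standard Brenner--Scott/Morin--Nochetto--Siebert argument, which is exactly what the paper invokes (the paper gives no proof of its own and simply cites \cite{Brenner1}). However, the identity you write as the linchpin is wrong, and the error propagates to the end. You claim ``the residual identity \eqref{ResEq} applied at level $j{+}1$ gives $a(u-u_{j+1},\zeta_z Q_jR_j)=(R_j,\zeta_z Q_jR_j)_T$.'' That is false either way you read it: applying \eqref{ResEq} at level $j{+}1$ would produce $R_{j+1}$ on the right, not $R_j$; and since $\zeta_zQ_jR_j\in V_{j+1}$, Galerkin orthogonality gives $a(u-u_{j+1},\zeta_zQ_jR_j)=0$. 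The correct mechanism combines two statements: the level-$j$ residual equation $a(u-u_j,v)=(R_j,v)$ (valid for continuous $v$ supported in $\mathrm{int}(T)$, so the jump sum in \eqref{ResEq} vanishes) and the level-$(j{+}1)$ Galerkin orthogonality $a(u-u_{j+1},v)=0$. Subtracting yields
\[
a(u_{j+1}-u_j,\,v)=(R_j,v),\qquad v=\zeta_z\,Q_jR_j\in V_{j+1},
\]
and now the bubble estimate together with the fractional inverse inequality of Appendix~\ref{invse} gives $h_T^{\alpha/2}\|R_j\|_{L^2(T)}\lesssim \|u_{j+1}-u_j\|_{E(T)}+\mathrm{osc}_j(T)$ \emph{directly}, with the discrete increment already in place. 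Your route instead produces $h_T^{\alpha/2}\|R_j\|_{L^2(T)}\lesssim\|u-u_{j+1}\|_{H^{\alpha/2}(T)}+\mathrm{osc}_j(T)$ and then proposes to ``eliminate'' $\|u-u_{j+1}\|$ afterwards; that step is not available---you cannot bound the local continuous error at level $j{+}1$ by $\|u_{j+1}-u_j\|_{E(T)}$ and oscillation---so as written the argument does not close. (Your last sentence hints that you sense this, but the fix has to happen at the identity, not at the end.)

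The treatment of $\eta_{2,T}=\|[u_j]\|_{L^2(\partial T)}$ has the same defect. The assertion $[u_j]=[u_j-u_{j+1}]$ fails because $u_{j+1}\in V_{j+1}$ is a DG function and is not single-valued across $\partial T$. The jump contribution is handled exactly as in Lemma~\ref{lemma:3.2}, via an edge-bubble test function, but again with the subtraction identity $a(u_{j+1}-u_j,v)=(R_j,v)$ in place of $a(u-u_j,v)=(R_j,v)$, so that $\|u_{j+1}-u_j\|_{E(T)}$ appears on the right rather than $\|u-u_j\|_{E(T)}$.
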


\begin{lemma}[oscillation reduction]\label{lemma:3.4}
 Let $T\in\Omega_j$ be subdivided into elements in $\Omega_{j+1}$ such that
 \begin{equation}
   h_{T'}\leq \gamma h_T \quad \mathrm{if}\ T'\in\Omega_{j+1}\ \mathrm{and}\ T'\subset T,
 \end{equation}
 where the positive constant $\gamma\leq1$. There exist constants $0<\rho_1<1$ and $0<\rho_2$, depending only on $\gamma$, such that
 \begin{equation}
   \mathrm{osc}_{j+1}(\Omega_h)^2\leq\rho_1\mathrm{osc}_j(\Omega_h)^2+\rho_2\|u_{j+1}-u_j\|_{E(\Omega_h)}^2,
 \end{equation}
\end{lemma}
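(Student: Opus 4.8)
The plan is to mirror the classical oscillation-reduction argument of Morin--Nochetto--Siebert as adapted in \cite{Brenner1}, with the only structural change being that the factor $h_T$ appearing in the second-order case is replaced by $h_T^\alpha$ coming from the fractional energy norm and the definition of $\mathrm{osc}_j$. First I would split the sum defining $\mathrm{osc}_{j+1}(\Omega_h)^2$ into the contribution from elements $T\in\Omega_j$ that are actually refined (i.e.\ contain a child element $T'\subset T$ with $h_{T'}\le\gamma h_T$, $\gamma<1$) and the contribution from the unrefined elements, on which $\mathrm{osc}$ is unchanged. On a refined element the key gain is purely geometric: for each child $T'\subset T$ we have $h_{T'}^{\alpha}\le\gamma^\alpha h_T^\alpha$, so summing $\mathrm{osc}_{j+1}(T')^2=h_{T'}^{\alpha}\|R_{j+1}-Q_{j+1}R_{j+1}\|_{L^2(T')}^2$ over the children produces at worst a factor $\gamma^\alpha<1$ in front of $h_T^\alpha\|R_{j+1}-Q_{j+1}R_{j+1}\|_{L^2(T)}^2$, since the $L^2$ norms over the children add up to the norm over $T$ and the finer projection $Q_{j+1}$ only decreases the residual oscillation.

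Next I would compare $R_{j+1}$ with $R_j$. Writing $R_j=f-(D_x^\alpha u_j+D_y^\alpha u_j)$, the difference $R_{j+1}-R_j=-(D_x^\alpha+D_y^\alpha)(u_{j+1}-u_j)$, so using the triangle inequality together with the fact that $Q_jR_j$ is the $L^2$-best approximation in piecewise polynomials of degree $k-1$ on $\Omega_j$ (hence a fortiori not better than $Q_{j+1}R_{j+1}$ on the finer mesh restricted appropriately), one obtains
\[
h_T^{\alpha}\|R_{j+1}-Q_{j+1}R_{j+1}\|_{L^2(T)}^2
\le (1+\delta)\,h_T^{\alpha}\|R_j-Q_jR_j\|_{L^2(T)}^2
+ C_\delta\, h_T^{\alpha}\|(D_x^\alpha+D_y^\alpha)(u_{j+1}-u_j)\|_{L^2(T)}^2
\]
for any $\delta>0$, by a standard $ab\le \delta a^2+C_\delta b^2$ splitting. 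The last term is where the energy norm enters: by the inverse inequality of Appendix~\ref{invse} applied on each element of $\Omega_{j+1}$ inside $T$ one bounds $h_T^{\alpha}\|(D_x^\alpha+D_y^\alpha)(u_{j+1}-u_j)\|_{L^2(T)}^2$ by $C\|u_{j+1}-u_j\|_{E(T\cap\Omega_{j+1})}^2$, the constant depending only on $\gamma$ and the minimum angle. Summing over all refined $T$, and absorbing the small $\gamma^\alpha(1+\delta)$ into a contraction constant $\rho_1<1$ by first fixing $\delta$ so that $\gamma^\alpha(1+\delta)<1$, while the unrefined elements contribute their unchanged $\mathrm{osc}_j(T)^2$ which is trivially $\le\rho_1\,\mathrm{osc}_j(T)^2$ up to enlarging $\rho_1$ if necessary (here one uses $\gamma^\alpha(1+\delta)<\rho_1<1$), yields the claimed estimate with $\rho_2=C_\delta C$.

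The main obstacle I anticipate is the bookkeeping around the two different projections $Q_j$ and $Q_{j+1}$ living on nested but distinct meshes: one must be careful that $Q_{j+1}$ restricted to a child $T'$ is at least as good an approximation of $R_{j+1}$ as the $\Omega_j$-projection would be, and that the data oscillation $f-Q_jf$ (hidden inside $R_j-Q_jR_j$ once one notes $Q_j$ annihilates the polynomial part $D_x^\alpha u_j+D_y^\alpha u_j$ up to degree considerations) behaves monotonically under refinement. A secondary technical point is that $D_x^\alpha u_j$ is \emph{not} a polynomial of low degree — unlike in the classical Laplacian case where the residual's smooth part is constant — so the reduction of $\|R_j-Q_jR_j\|$ to genuine data oscillation $\|f-Q_jf\|$ is not exact; one handles this by keeping the full residual oscillation and using that the inverse inequality controls $h_T^{\alpha}\|D_x^\alpha(u_{j+1}-u_j)\|_{L^2}$ by the energy norm, exactly as in Lemma~\ref{lemma:3.3}. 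Apart from these points the argument is routine, which is why \cite{Brenner1} and the remark preceding the statement allow us to omit the full details.
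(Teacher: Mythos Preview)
The paper does not prove this lemma; it merely says both Lemmas~\ref{lemma:3.3} and~\ref{lemma:3.4} ``can be proved similarly as in \cite{Brenner1}'' with the only difference being the energy norm, and omits all details. Your plan---adapt the Morin--Nochetto--Siebert argument from \cite{Brenner1} with $h_T$ replaced by $h_T^\alpha$---is therefore exactly the intended route.

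However, your treatment of the unrefined elements is flawed. You assert that on an unrefined element the oscillation is ``unchanged'' and then ``trivially $\le\rho_1\,\mathrm{osc}_j(T)^2$'' with $\rho_1<1$. This is self-contradictory: if $\mathrm{osc}_{j+1}(T)^2=\mathrm{osc}_j(T)^2$ then $\mathrm{osc}_j(T)^2\le\rho_1\,\mathrm{osc}_j(T)^2$ forces $\rho_1\ge1$ whenever $\mathrm{osc}_j(T)\ne0$. Moreover the oscillation is not even unchanged there, since $R_{j+1}-R_j=-(D_x^\alpha+D_y^\alpha)(u_{j+1}-u_j)$ is nonzero on every element (a point you yourself raise later), so on unrefined elements the best you get is $\mathrm{osc}_{j+1}(T)^2\le(1+\delta)\,\mathrm{osc}_j(T)^2+C_\delta\|u_{j+1}-u_j\|_{E(T)}^2$ with leading factor $\ge1$. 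The global contraction $\rho_1<1$ does not come from elementwise bounds on the unrefined part at all; it comes from Marking Strategy~C, which guarantees that the refined set carries a fraction at least $\theta_2^2$ of $\mathrm{osc}_j(\Omega_h)^2$. Combining the $\gamma^\alpha(1+\delta)$ factor on refined elements with the $(1+\delta)$ factor on unrefined ones and this bulk property yields $\rho_1=(1+\delta)\bigl(1-(1-\gamma^\alpha)\theta_2^2\bigr)<1$ for $\delta$ small enough. In particular $\rho_1$ also depends on $\theta_2$, not only on $\gamma$, despite the lemma's wording.
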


Combining Lemma \ref{lemma:3.3} and Lemma \ref{lemma:3.4} with Galerkin orthogonality
\begin{equation}
  \|u_{j+1}-u_j\|_{E(\Omega_h)}^2=\|u-u_j\|_{E(\Omega_h)}^2-\|u-u_{j+1}\|_{E(\Omega_h)}^2,
\end{equation}
we can easily get the following convergence proposition, which shows that $u_j$ converges to $u$ in the energy norm.
\begin{corollary}
  Let $\{u_j\}_{j\geq1}$ be the sequence of finite element solutions generated by Marking Strategy C. There exist positive constants $\chi$ and $\xi$ such that $0<\xi<1$ and
  \begin{equation}
    \|u-u_{j+1}\|_{E(\Omega_h)}^2+\chi\mathrm{osc}_{j+1}(\Omega_h)^2\leq \xi(\|u-u_j\|_{E(\Omega_h)}^2+\chi\mathrm{osc}_j(\Omega_h)^2).
  \end{equation}
\end{corollary}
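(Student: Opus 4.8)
The plan is to combine the three preceding results — the error reduction estimate (Lemma~\ref{lemma:3.3}), the oscillation reduction estimate (Lemma~\ref{lemma:3.4}), and Galerkin orthogonality — into a single contraction inequality for the quantity $\|u-u_j\|_{E(\Omega_h)}^2+\chi\,\mathrm{osc}_j(\Omega_h)^2$, for a suitably chosen weight $\chi>0$. First I would use the upper bound (Lemma~\ref{lemma:3.1}) together with the marking property $\sum_{T\in\hat\Omega_h}\eta_T^2\geq\theta_1^2\sum_{T\in\Omega_h}\eta_T^2$ to get, for the marked elements, $\sum_{T\in\hat\Omega_h}\eta_T^2\geq\theta_1^2 C_1^{-1}\|u-u_j\|_{E(\Omega_h)}^2$; by Marking Strategy C every marked element is subdivided and thus contains an interior node of $\Omega_{j+1}$, so Lemma~\ref{lemma:3.3} applies to each of them and, summing over $T\in\hat\Omega_h$,
\begin{equation*}
\theta_1^2 C_1^{-1}\|u-u_j\|_{E(\Omega_h)}^2\leq \sum_{T\in\hat\Omega_h}\eta_T^2\leq C\bigl(\|u_{j+1}-u_j\|_{E(\Omega_h)}^2+\mathrm{osc}_j(\Omega_h)^2\bigr).
\end{equation*}
By Galerkin orthogonality, $\|u_{j+1}-u_j\|_{E(\Omega_h)}^2=\|u-u_j\|_{E(\Omega_h)}^2-\|u-u_{j+1}\|_{E(\Omega_h)}^2$, so the above rearranges into
\begin{equation*}
\|u-u_{j+1}\|_{E(\Omega_h)}^2\leq (1-\mu)\|u-u_j\|_{E(\Omega_h)}^2+\mu\,c\,\mathrm{osc}_j(\Omega_h)^2,
\end{equation*}
with $\mu:=\theta_1^2/(C_1 C)\in(0,1)$ after possibly shrinking the constant, and $c$ an absolute constant.

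The second step is to absorb the leftover $\mathrm{osc}_j(\Omega_h)^2$ term using oscillation reduction. Multiply the reduction estimate by the yet-undetermined $\chi$ and add it to Lemma~\ref{lemma:3.4} multiplied by $\chi$:
\begin{equation*}
\|u-u_{j+1}\|_{E(\Omega_h)}^2+\chi\,\mathrm{osc}_{j+1}(\Omega_h)^2\leq (1-\mu)\|u-u_j\|_{E(\Omega_h)}^2+(\mu c+\chi\rho_1)\mathrm{osc}_j(\Omega_h)^2+\chi\rho_2\|u_{j+1}-u_j\|_{E(\Omega_h)}^2.
\end{equation*}
Now invoke Galerkin orthogonality once more to replace $\chi\rho_2\|u_{j+1}-u_j\|_{E(\Omega_h)}^2$ by $\chi\rho_2\bigl(\|u-u_j\|_{E(\Omega_h)}^2-\|u-u_{j+1}\|_{E(\Omega_h)}^2\bigr)$, move the negative part to the left side, and divide through by $1+\chi\rho_2$. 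The resulting coefficients are: in front of $\|u-u_j\|_{E(\Omega_h)}^2$, $(1-\mu+\chi\rho_2)/(1+\chi\rho_2)$, which is strictly less than $1$ provided $\chi\rho_2$ is kept bounded and $\mu>0$; and in front of $\mathrm{osc}_j(\Omega_h)^2$, $(\mu c+\chi\rho_1)/(1+\chi\rho_2)$, which we want to be at most $\xi\chi$ for the same $\xi<1$.

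The heart of the argument is the choice of parameters. I would first fix $\chi$ large enough that $\mu c\le (1-\rho_1)\chi/2$, i.e.\ $\chi\ge 2\mu c/(1-\rho_1)$; then the oscillation coefficient is at most $\chi(1+\rho_1)/(2(1+\chi\rho_2))$, and one checks this is $\le\xi\chi$ for $\xi$ close to $1$. Simultaneously one needs the energy coefficient $(1-\mu+\chi\rho_2)/(1+\chi\rho_2)=1-\mu/(1+\chi\rho_2)<1$ to also be bounded by this $\xi$; since $\chi$ is now a fixed finite number, $\mu/(1+\chi\rho_2)$ is a fixed positive number, so such a $\xi\in(0,1)$ exists, namely $\xi:=\max\{1-\mu/(1+\chi\rho_2),\,(1+\rho_1)/(2(1+\chi\rho_2))\}$, and after a final check that the second entry is $<1$ (true since $\rho_1<1\le 1+\chi\rho_2$... one may need $\chi\rho_2$ not too small, which is automatic once $\chi$ is large). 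This yields
\begin{equation*}
\|u-u_{j+1}\|_{E(\Omega_h)}^2+\chi\,\mathrm{osc}_{j+1}(\Omega_h)^2\le\xi\bigl(\|u-u_j\|_{E(\Omega_h)}^2+\chi\,\mathrm{osc}_j(\Omega_h)^2\bigr),
\end{equation*}
which is the claim. The main obstacle is precisely this bookkeeping: one must verify that a \emph{single} $\chi$ and $\xi$ can be chosen to make both coefficients simultaneously $\le\xi<1$; the freedom to enlarge $\chi$ (improving the oscillation balance at no cost to strict contractivity in energy, since $\rho_2>0$ keeps the energy coefficient strictly below $1$) is what makes this possible, and I would present that balancing explicitly rather than hand-waving it. A minor point to be careful about is that Lemma~\ref{lemma:3.3} requires the element to contain an interior node of $\Omega_{j+1}$, which is guaranteed by whatever refinement rule (e.g.\ newest-vertex bisection or a red-refinement) is paired with Marking Strategy C, and that the refinement also supplies the size-reduction hypothesis $h_{T'}\le\gamma h_T$ of Lemma~\ref{lemma:3.4}; I would state this as a standing assumption on the refinement procedure.
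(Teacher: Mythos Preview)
Your proposal is correct and follows exactly the approach the paper indicates: the paper gives no detailed proof of this corollary, merely stating that ``combining Lemma~\ref{lemma:3.3} and Lemma~\ref{lemma:3.4} with Galerkin orthogonality'' yields the result, and your argument is precisely that combination worked out in full (together with the upper bound Lemma~\ref{lemma:3.1} and the marking property, which are implicitly needed to pass from $\sum_{T\in\hat\Omega_h}\eta_T^2$ to $\|u-u_j\|_{E(\Omega_h)}^2$). Your bookkeeping for the choice of $\chi$ and $\xi$ is the standard Morin--Nochetto--Siebert balancing that the cited reference \cite{Brenner1} carries out, so nothing here diverges from the paper's intent.
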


\subsubsection{Scheme 2 -- Dual Weighted Residual method (DWR)}\label{section:3.1.3}
Different from the scheme above, the traditional approach to adaptivity aiming at estimating the error with respect to the generic energy norm of the problem, or the global $L^2$ norm, the Dual Weighted Residual method (DWR) \cite{Rolf1} for goal-oriented error estimation aims at economical computation of arbitrary quantities of physical interest. This is typically required in the design cycles of technical applications. `Goal-oriented' adaptivity is designed to achieve these tasks with minimal cost.

When solving the fractional problems, the DWR method is significantly better than the traditional approach since the fractional operator and energy norm is nonlocal. In detail, for left Riemann-Liouville fractional operator, the numerical solution on one element $T$ is affected by all the elements on the left. Therefore, the lower bound is not local absolutely and the over refinement may occur. While DWR multiplies every local error indicator on one element by a weight, consisting of the dual solution. It has the feature of a `generalized' Green function $G(T,T')$, which describes the dependence of the target error quantity $J(e_h)$ concentrated at some element $T$; we select it as $\|e_h\|_{E(T)}$ in the following, on local properties of the data, i.e., the error estimator on element $T'$.

Following the general concept of the DWR method, let $z\in V$ be the solution of the associated dual problem
\begin{equation}\label{equation:3.1.3.0}
  a(\varphi,z)=J(\varphi) \quad \forall \varphi\in V,
\end{equation}
and $z_h\in V_h$ be discontinuous finite element approximation defined by
\begin{equation}\label{equation:3.1.3.1}
  a(\varphi_h,z_h)=J(\varphi_h) \quad \forall \varphi_h\in V_h,
\end{equation}
where the bilinear form $a(\cdot,\cdot)$ is defined as
\[a(u,v)=a_x^\alpha(u,v)+a_y^\beta(u,v)+\sum\limits_{e\in\Gamma}\int_e[u][v].\]
%\[a(u,v)=({}_aD_x^{\frac{\alpha}{2}}u,{}_xD_b^{\frac{\alpha}{2}}v)+
%({}_aD_x^{\frac{\alpha}{2}}v,{}_xD_b^{\frac{\alpha}{2}}u)+
% ({}_cD_y^{\frac{\beta}{2}}u,{}_yD_d^{\frac{\beta}{2}}v)+
%({}_cD_y^{\frac{\beta}{2}}v,{}_yD_d^{\frac{\beta}{2}}u).\]
Using this construction together with Galerkin orghogonality, we obtain
\begin{equation}\label{equation:3.1.3.2}
\begin{split}
  J(e_h)&=a(e_h,z)=a(e_h,z-z_h)\\
      %&=(f,z-z_h)-a(u_h,z-z_h)\\
      &=\underset{T\in\Omega_h}{\sum}(R,z-z_h)+\sum\limits_{e\in\Gamma}\int_e[u_h][z-z_h]\\
      &\leq C\underset{T\in\Omega_h}{\sum}\Big(\|R\|_{L^2(T)}\cdot\|z-z_h\|_{L^2(T)}
        +\|[u_h]\|_{L^2(\partial T)}\|[z-z_h]\|_{L^2(\partial T)}\Big).
      \end{split}
\end{equation}

Thus, we define the local error indicator
\begin{equation}\label{equation:3.1.3.3}
  \eta_T:=\|R\|_{L^2(T)}\cdot\|z-z_h\|_{L^2(T)}+\|[u_h]\|_{L^2(\partial T)}\|[z-z_h]\|_{L^2(\partial T)}.
\end{equation}
Taking $J(\varphi)$ such that $J(e_h)=\|e_h\|_{E(\Omega_h)}$, then we have the global upper bound in energy norm,
\begin{equation}\label{equation:3.1.3.4}
  \|e_h\|_{E(\Omega_h)}=J(e_h)\leq C\underset{T\in\Omega_h}{\sum}\eta_T.
\end{equation}

If we take a rough estimate to $J(e_h)$ by $\|z-z_h\|_{L^2(\Omega)}\leq Ch^{\alpha/2}|z|_{{H^{\alpha/2}}(\Omega_h)}$ and $\|[z-z_h]\|_{L^2(\partial T)}\leq C\|z\|_{L^2(\partial T)}$, then we have, similar to \eqref{upbd}, from (\ref{equation:3.1.3.2}), with
%\[J(e_h)\leq C\underset{T\in\Omega_h}{\sum}\Big(h_T^{\alpha/2}\|R\|_{L^2(T)}|z|_{H^{\alpha/2}(T)}
%  +\|[u_h]\|_{L^2(\partial T)}\|z\|_{L^2(\partial T)}\Big).\]
a priori analysis in forms of bounds for $z$,
\[\|e_h\|_{E(\Omega_h)}\leq \Big(\underset{T\in\Omega_h}{\sum}h_T^{\alpha}\|R\|_{L^2(\Omega)}^2+\|[u_h]\|_{L^2(\partial T)}^2\Big)^{1/2}.\]

Here, we get a global posteriori error estimate based on energy norm that is consistent with Scheme 1. In this sense, a posteriori error estimate based on DWR is more meticulous. In order to evaluate the posteriori error representation (\ref{equation:3.1.3.4}), we need information about the discontinuous dual solution $z$. Since in practice, $z$ is not explicitly known, such information has to be obtained either through a priori analysis in form of bounds for $z$ in certain Sobolev norms or through computation by solving the dual problem numerically.

Here, we approximate $z$ by a high-order method. We take $V_h$ as the linear discontinuous finite element and solve the dual problem by using quadratic discontinuous finite element on the current mesh yielding an approximation $z_h^{(2)}\in V_h^{(2)}$ to $z$, and $z_h$ can be got by linear interpolation of $z$. This yields the approximate local error indicator
\begin{equation}
  \eta_T\approx \|R\|_{L^2(T)}\cdot\|z_h^{(2)}-\Pi_hz_h^{(2)}\|_{L^2(T)}.\nonumber
\end{equation}

\begin{remark}
  In addition to the above points that a posteriori error estimate based on DWR is better than a global posteriori error estimate based on energy norm, the former has advantages when the derivative on $x-$direction and on $y-$direction is different, i.e., $\alpha\neq\beta$.
  %, that is a common phenomenon in engineering.
  When discussing a posteriori error estimate based on energy norm, we take $\alpha=\beta$ in Eq (\ref{equation:3.0}) for convenience. Actually, if $\alpha<\beta$, the indicator is not easy to select. We must take the exponent of $h_T$ in the indicator $\eta_T:=h_T^{\alpha/2}\|R\|_{L^2(T)}$ to be $\alpha/2$ to guarantee the upper bound and take it to be $\beta/2$ to guarantee the lower bound. A posteriori error estimate based on DWR avoids this problem, and it is still effective for complex problems.
\end{remark}

\subsubsection{Numerical experiment}\label{section:3.1.4}
In this section we will present some numerical experiments using the two schemes given above. We compare them with uniformly refinement and with each other.

\begin{example}\label{exam:3.1}
Consider the 1D fractional equation on the domain $\Omega=[0,2]$ with $\alpha=0.8$,
\begin{equation}
\left\{ \begin{array}{ll}
  {}_0D_x^{\alpha}u=f  \quad\Omega\\\nonumber
  u=0  \quad\quad\quad \mathbb{R}\backslash\Omega,
\end{array} \right.
\end{equation}
and its dual problem is
\begin{equation}
\left\{ \begin{array}{ll}
  {}_xD_2^{\alpha}u=f  \quad\Omega\\\nonumber
  u=0  \quad\quad\quad \mathbb{R}\backslash\Omega.
\end{array} \right.
\end{equation}
\end{example}

The source term $f$ is chosen such that the exact solution is $u=\left(1-(x-1)^2\right)^\gamma$, $\gamma=0.7$, which has poor regularity near the boundary, and we use the discontinuous piecewise linear function for approximation. % We let $\gamma$ be $0.7$.

\begin{figure}[!htb]
  \centering
  \includegraphics[scale=0.4]{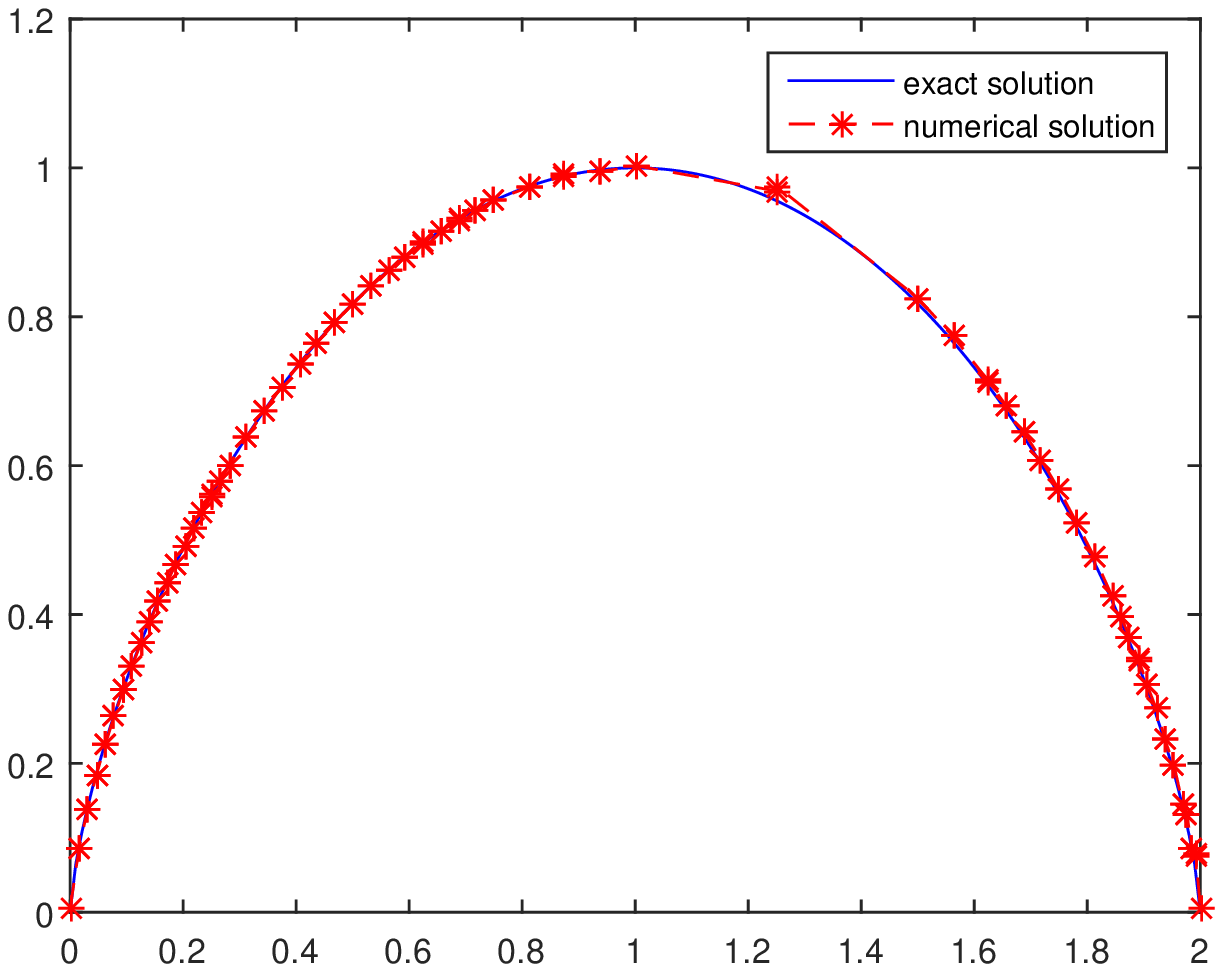}
  \includegraphics[scale=0.4]{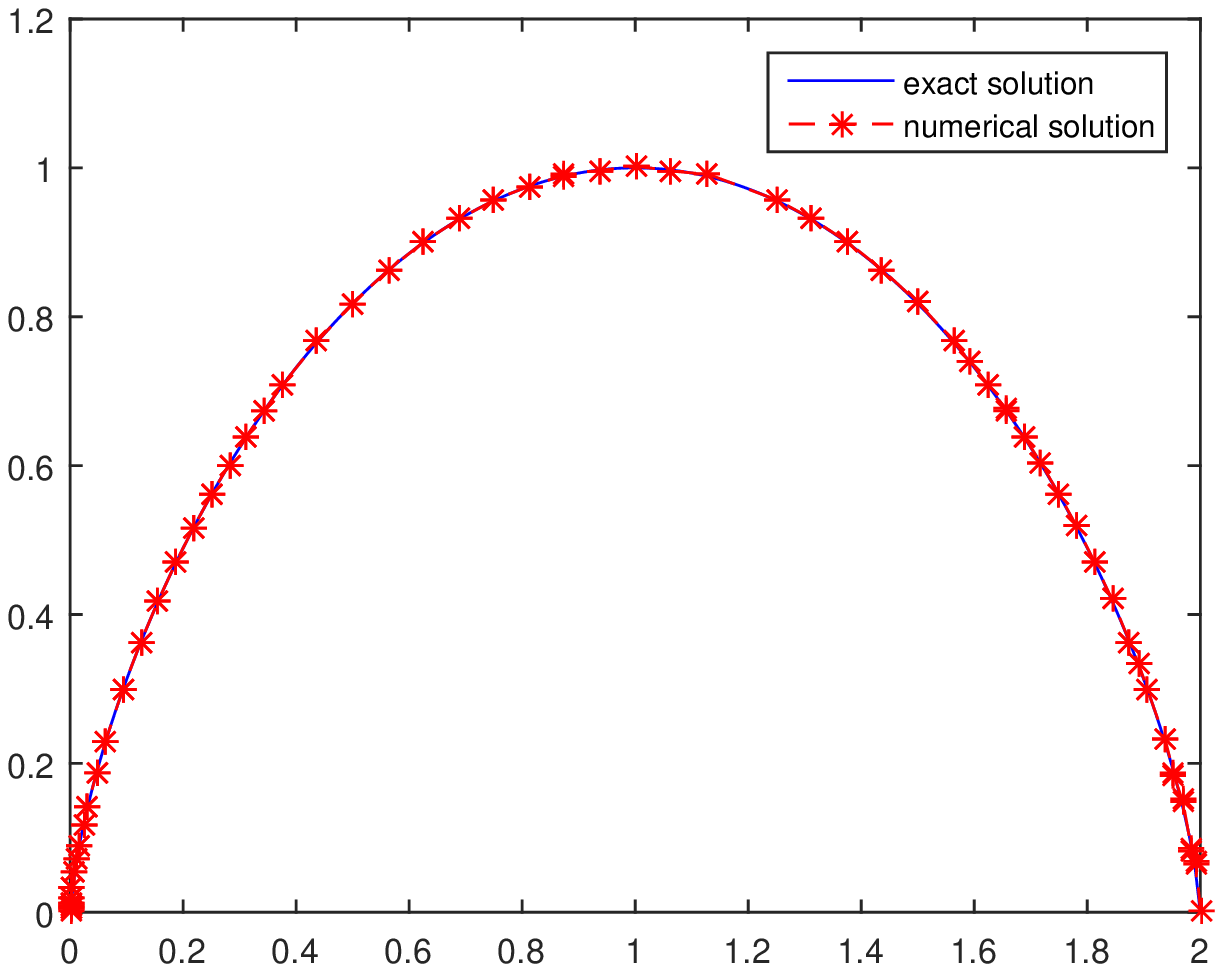}\\
  \caption{Adaptive refinement of Example \ref{exam:3.1}: energy-norm indicator on 60 elements (left) and weighted indicator on 58 elements (right).}\label{fig:3.1.1}
\end{figure}
\begin{figure}[!htb]
  \centering
  \includegraphics[scale=0.6]{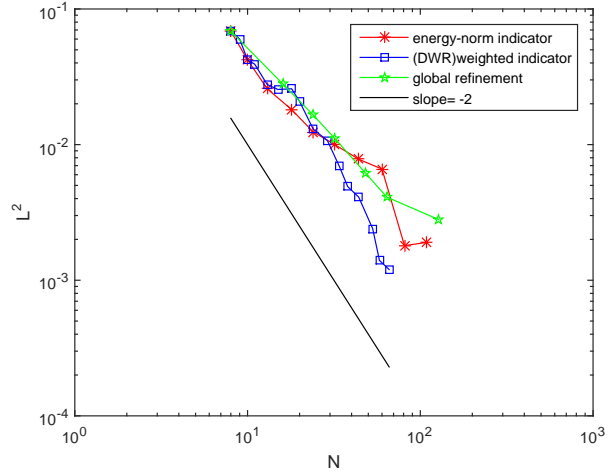}\\
  \caption{$L^2$ error versus number of elements $N$ of Example \ref{exam:3.1}, for uniform refinement, the energy norm indicator, the weighted indicator obtained by the DWR approach.}\label{fig:3.1.2}
\end{figure}

This is a boundary layer problem, i.e., the solution has less regularity around the endpoints of domain $[0,2]$, where the mesh should be finer. We initially divide the interval $[0,2]$ into 8 cells uniformly, then refine the mesh based on the energy-norm indicator in Figure \ref{fig:3.1.1} (left) and the weighted-norm indicator in Figure \ref{fig:3.1.1} (right). The numerical solution approximates the exact solution both very well obviously. And from an intuitive point of view, the right one is better than the left one. Besides, we compare the two kinds of refinements with uniform refinement, and show their convergence rates. A reference line is provided which shows the optimal convergence rate $N^{-2}$. As shown in Figure \ref{fig:3.1.2}, the uniform refinement is the worst while the refinement based on the weighted indicator is the best, almost achieving the optimal convergence rate.

\begin{example}\label{exam:3.2}
Consider the 2D fractional equation on the domain by $\Omega:=[0,2]\times[0,2]$ with $\alpha=0.2$, $\beta=0.8$,
\begin{equation}
\left\{ \begin{array}{ll}
{}_0D_x^{\alpha}u+{}_xD_2^{\alpha}u+{}_0D_y^{\beta}u+{}_yD_2^{\beta}u=f   \quad\Omega  \\
                                            \qquad\qquad\qquad\qquad\qquad\qquad\quad    u=0    \quad\mathbb{R}^2\backslash\Omega.
\end{array} \right.
\end{equation}
and its dual problem is itself.
The source term $f$ is chosen such that the exact solution writes
\[u(x,y)=x(x-2)y(y-2)\arctan\Big(\frac{\sqrt{x^2+y^2}-2}{0.05}\Big).\]
\end{example}

\begin{figure}[!htb]
  \centering
  \includegraphics[scale=0.5]{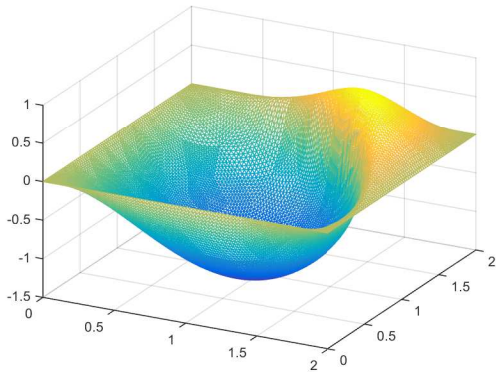}\\
  \caption{Surface plot of the exact solution of Experiment \ref{exam:3.2}, and the steep region around the arc $x^2+y^2=4$.}\label{fig:3.1.0}
\end{figure}
\begin{table}[!htb]
\centering
\caption{Example \ref{exam:3.2}: Uniform refinement.}
  \begin{tabular}{*{3}{|c}|}\hline
    K  & $\|u-u_k\|_{L^2}$ & $\|u-u_k\|_E$  \\\hline
   32  & 0.3164 & 1.1565 \\
   72  & 0.1908 & 0.9010 \\
   128 & 0.1257 & 0.6794 \\
   200 & 0.0935 & 0.5264 \\
   288 & 0.0755 & 0.4357 \\
   392 & 0.0637 & 0.3803 \\
   512 & 0.0528 & 0.3362 \\\hline
%    8  & 0.6752 & 1.2608  \\
%   32  & 0.3164 & 0.7419 \\
%   128 & 0.1257 & 0.4154 \\
%   288 & 0.0755 & 0.2673 \\
%   512 & 0.0528 & 0.2079 \\
%   800 & 0.0381 & 0.1726 \\
%   1152 & 0.0321 & 0.1573 \\\hline
\end{tabular}\vspace{0.2cm}
\label{table:3}
%\caption{\emph{Experiment \ref{exam:3.2}: Uniform refinement.The number of elements K, the global $L^2$ norm $\|u-u_k\|_{L^2}$ and the global energy-norm error $\|u-u_k\|_E$.}}\label{table:3}
\end{table}
\begin{table}[!htb]
  \centering
  \caption{Example \ref{exam:3.2}: Refinement based on the energy-norm indicator.}
  \begin{tabular}{*{6}{|c}|}\hline
  k  & K  & $\|u-u_k\|_{L^2}$  &$\|u-u_k\|_E$&  $\eta$  & $I_{eff}$ \\\hline
  1  & 8  & 0.6752             & 1.2608      &  3.5025  & 2.7781 \\
  2  & 15 & 0.3930             & 1.0819      &  2.0108  & 1.8586 \\
  3  & 28 & 0.2614             & 0.7705      &  1.6311  & 2.1168 \\
  4  & 42 & 0.1652             & 0.6573      &  1.1697  & 1.7796 \\
  5  & 86 & 0.1169             & 0.4383      &  0.9193  & 2.0973 \\
  6  & 152 & 0.0872            & 0.4074      &  0.6599  & 1.6198 \\
  7  & 201 & 0.0684            & 0.3023      &  0.5550  & 1.8361 \\\hline
  %8  & 260 & 0.0424            & 0.2192      &  0.4377  & 10.3125 \\\hline
  %9  & 373 & 0.0359            & 0.1953      &  0.3643  & 10.1478 \\
  \end{tabular}\vspace{0.2cm}
  \label{table:4}
%    \caption{\emph{Experiment \ref{exam:3.2}: Refinement based on the energy-norm indicator. The number of elements K, the global $L^2$ norm $\|u-u_k\|_{L^2}$ and the global energy-norm error $\|u-u_k\|_E$, the total estimated error $\eta$, and the effectiveness index $I_{eff}$ of different number of iterations.}}\label{table:4}
\end{table}
\begin{table}[!htb]
  \centering
  \caption{Example \ref{exam:3.2}: Refinement based on the weighted indicator.}
  \begin{tabular}{*{6}{|c}|}\hline
  k  & K  & $\|u-u_k\|_{L^2}$  &$\|u-u_k\|_E$&  $\eta$  & $I_{eff}$ \\\hline
  1  & 8  & 0.6752             & 1.2608      &  1.0679  & 0.8470 \\
  2  & 15 & 0.3930             & 1.0819      &  0.7608  & 0.7032 \\
  3  & 28 & 0.2614             & 0.7705      &  0.3620  & 0.4698 \\
  4  & 36 & 0.1650             & 0.6408      &  0.2558  & 0.3992 \\
  5  & 63 & 0.1242             & 0.4852      &  0.2260  & 0.4658 \\
  6  & 110 & 0.0933            & 0.4174      &  0.1603  & 0.3840 \\
  7  & 166 & 0.0551            & 0.2766      &  0.1146  & 0.4143 \\\hline
  %8  & 249 & 0.0444            & 0.2302      &  0.0854  & 1.9236 \\\hline
  \end{tabular}\vspace{0.2cm}
  \label{table:5}
%    \caption{\emph{Experiment \ref{exam:3.2}: Refinement based on the (DWR) weighted indicator. The number of elements K, the global $L^2$ norm $\|u-u_k\|_{L^2}$ and the global energy-norm error $\|u-u_k\|_E$, the total estimated error $\eta$, and the effectiveness index $I_{eff}$ of different number of iterations.}}\label{table:5}
\end{table}

Being the same as the case of 1D, we compare the three kinds of refinements: uniform refinement, the refinement based on energy-norm indicator, and weighted indicator. This exact solution has less regularity inside the domain, i.e., interior layer. Figure \ref{fig:3.1.0} is the shape of the exact solution. The adaptive mesh will be finer in the steep region. The experiment datum are  provided in Table \ref{table:3}, Table \ref{table:4}, and Table \ref{table:5}. We denote the total error indicator by $\eta$, and the effectiveness index $I_{eff}=\eta/\|e\|_{E(\Omega_h)}$. If the effectiveness index $I_{eff}$ remains roughly constant in different meshes, the indicator $\eta$ approximates the true error $e_h$ well. By comparing these tables, adaptive refinement is significantly better than uniform refinement. The weighted indicator is slightly better than the energy-norm indicator since the $L^2$ norm error and energy-norm error of the former is smaller than the latter, and the effectiveness index $I_{eff}$ of the former changes more moderately than the latter. In Figure \ref{fig:3.1.3}, two kinds of adaptive meshes are presented, and both refine the steep region. The right one (based on the weighted indicator) is slightly better than the left one (based on the energy norm indicator).
\begin{figure}[!htb]
  \centering
  \includegraphics[scale=0.4]{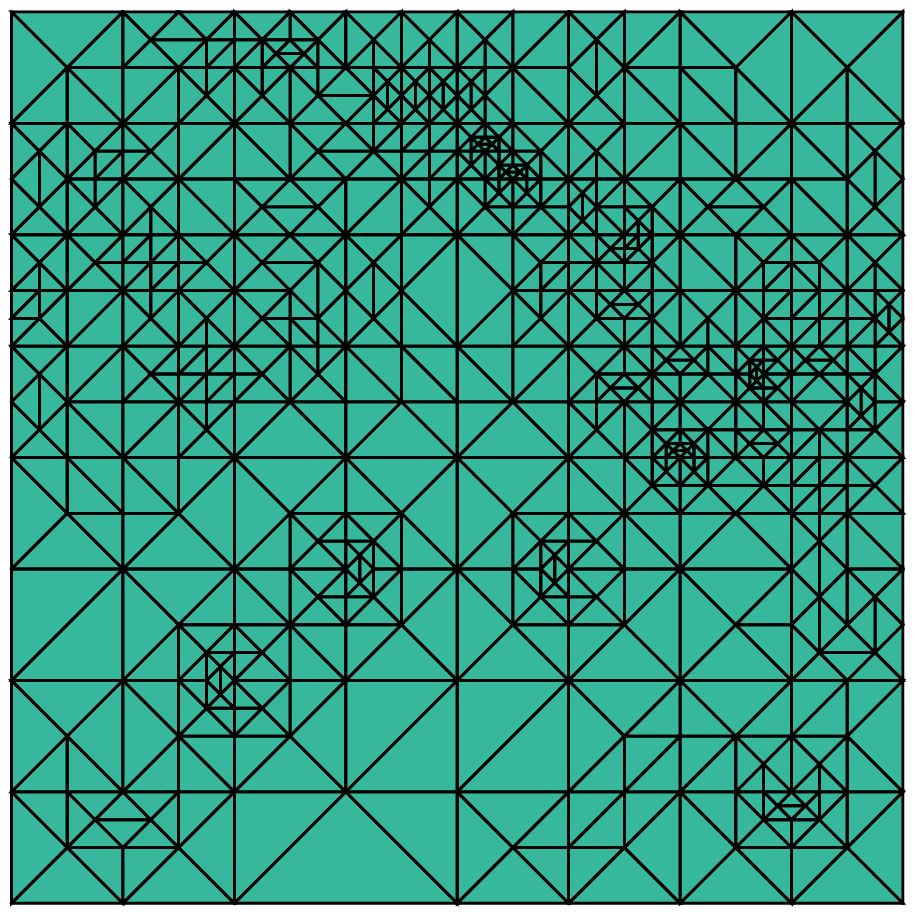}
  \includegraphics[scale=0.4]{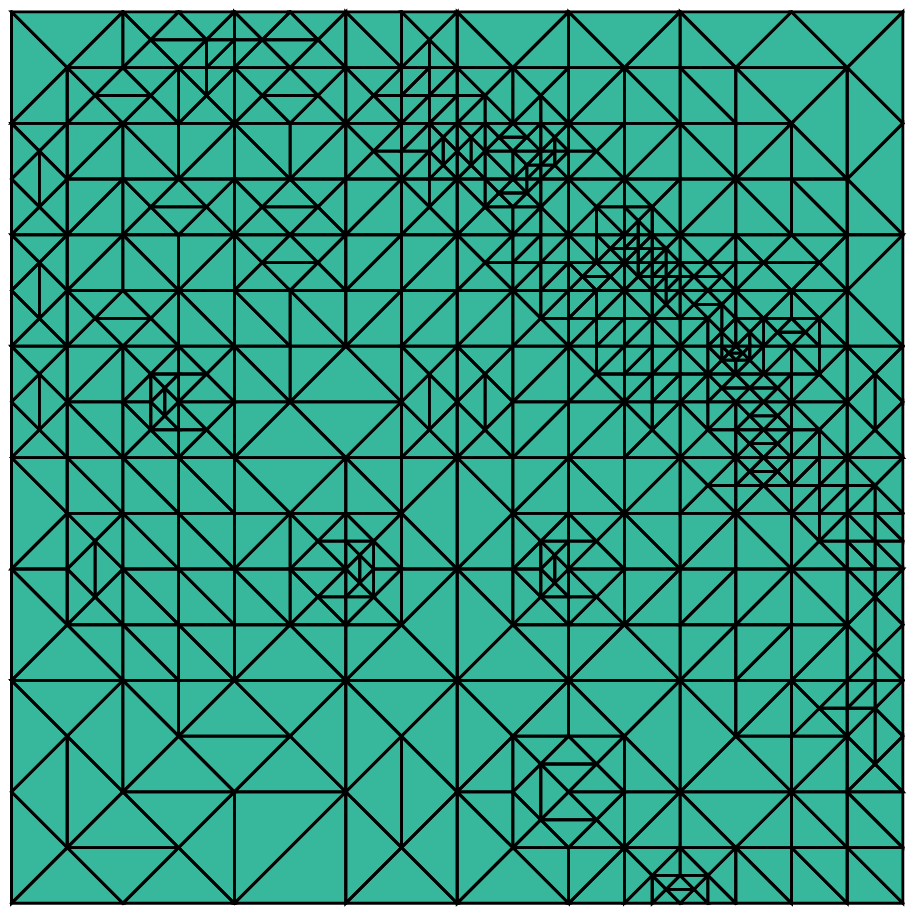}\\
  \caption{The mesh after 11 iterations (973 cells) based on energy-norm indicator on the left and the mesh after 11 iterations (903 cells) based on weighted indicator on the right.}\label{fig:3.1.3}
\end{figure}

\subsection{Evolution Equation}\label{section:3.2}
In this section, we consider the time dependent tempered fractional equation:
\begin{equation}\label{equation:3.2.1}
\left\{ \begin{array}{ll}
\partial_t u+\textbf{b}\cdot\nabla u -\kappa_1\nabla_x^{\alpha,\lambda}u-\kappa_2\nabla_y^{\beta,\lambda}u=f
& (\textbf{x},t)\in\Omega\times J \\
u(\textbf{x},0)=u_0(\textbf{x}) &  \textbf{x}\in\Omega \\
u(\textbf{x},t)=0   &  (\textbf{x},t)\in\mathbb{R}^2\backslash\Omega\times J
\end{array} \right.
\end{equation}
Different from the stationary equation, the mesh here is adapted to the solution in every time step using a posteriori error indicator, i.e., the adaptive algorithm solving the evolution equation at the $n-$th time step reads as
\[ \mathrm{Solve}\rightarrow\mathrm{Estimate}\rightarrow\mathrm{Refine}/\mathrm{Coarsen}.
\]
Here the refinement/coarsening procedure includes both the mesh and time-step size modifications. In this paper, we propose the following algorithm, similar to \cite{Ming1}, to modify the time-step size $\tau_n$ and mesh $\Omega_h^n$ starting from the initial time-step size $\tau_{n,0}=\tau_{n-1}$ and initial mesh $\Omega_h^{n,0}=\Omega_h^{n-1}$:

1. Refine the time-step size $\tau_{n,0}$ to the final time-step size $\tau_n$ such that the associated time error indicators are less than the prescribed tolerances.

2. Refine/Coarsen the mesh $\Omega_h^{n,0}$ to the final mesh $\Omega_h^n$ such that the associated space error indicators are less than the prescribed tolerances.

3. Enlarge the initial time-step size $\tau_{n+1,0}$ for next time step if the current time error indicator is much less than the tolerances.

\subsubsection{A posteriori error analyses}\label{section:3.2.1}
While the previous section is concentrated on the spatial mesh refinement based on the local error indicator, this section describes the process of evolution of the solution and the time-step size based on the time error indicator. Then we introduce the time and space local error indicator, and prove global upper bound and local lower bound. For convenience, we take $\mathbf{b}=\mathbf{0}$, $\alpha=\beta$,
 and let $V_h^n$ indicate the usual space of linear discontinuous finite element in $\Omega^n$ and piecewise constant discontinuous in $\tau_n$ \cite{Rolf1}. Let $u\in V$ be the solution of
\begin{equation}\label{equation:3.2.1.1}
  \left(\frac{\partial}{\partial t}u,v\right)+a(u,v)-\kappa(u,v)=(f,v) \quad \forall v\in V,
\end{equation}
and $u_h^n$ be the fully discrete discontinuous finite element approximation defined by
\begin{equation}\label{equation:3.2.1.2}
  (\partial_t u_h^n,v_h)+a(u_h^n,v_h)-\kappa(u_h^n,v_h)+(u_h^n-u_h^{n-1},v_h)=(\bar{f}^n,v_h) \quad \forall v_h\in V_h,
\end{equation}
where $\partial_tu_h^n=\frac{u_h^n-u_h^{n-1}}{\tau_n}$ and $\bar{f}^n=\frac{1}{\tau_n}\int_{t^{n-1}}^{t^n}f(\mathbf{x},t)dt$.

\begin{theorem}[upper bound]
  For any integer $1\leq m\leq N$, there exists a positive constant $C$ depending only on the minimum angle of meshes $\Omega_h^n,\ n=1,2,\cdots,m,$ such that the posteriori error estimate
  \begin{equation*}
  \begin{split}
\|u^m-u_h^m\|_{L^2(\Omega)}^2&+\sum\limits_{n=1}^m\int_{t^{n-1}}^{t^n}\|u^m-u_h^m\|_{E(\Omega_h^n)}^2dt\\
      &\leq\|u_0-u_h^0\|_{L^2(\Omega)}^2+\sum\limits_{n=1}^m\tau_n(\eta_\mathrm{time1}^n+\eta_\mathrm{time2}^n)+C\sum\limits_{n=1}^m\tau_n\eta_\mathrm{space}^n,
  \end{split}
  \end{equation*}
  holds, where the time error indicator and space error indicator are given by
  \[\eta_\mathrm{time1}^n=\frac{1}{\tau_n}\int_{t^{n-1}}^{t^n}\|f-\bar{f}^n\|_{L^2(\Omega)}^2dt,  \qquad
  \eta_\mathrm{time2}^n=\|u_h^n-u_h^{n-1}\|_{L^2(\Omega)}^2,
  \]
  \[\eta_\mathrm{space}^n=\underset{T\in\Omega_h^n}{\sum}\eta_T^n
  =\underset{T\in\Omega_h^n}{\sum} \Big(h_T^\alpha\|R^n\|_{L^2(T)}^2+\|[u_h]\|_{L^2(\partial T)}^2 \Big).\]
\end{theorem}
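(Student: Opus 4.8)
The plan is to merge a parabolic energy argument in time with the elliptic a posteriori machinery of Lemma~\ref{lemma:3.1}. First I would fix $t\in I_n:=(t^{n-1},t^n)$, write the error $e:=u-u_h$ with $u_h|_{I_n}\equiv u_h^n$, and subtract the discrete equation \eqref{equation:3.2.1.2} from the continuous one \eqref{equation:3.2.1.1}. Taking a test function $v\in V$ and inserting $\pm\, v_h$ with $v_h=\Pi v\in V_h^n$ the interpolant, Galerkin orthogonality removes the $V_h^n$-part and leaves an error equation whose right-hand side decomposes into an element residual term $\sum_T(R^n,v-v_h)_T$, an inter-element jump term $\sum_e\int_e[u_h][v-v_h]$, a temporal data oscillation term $(\,f-\bar{f}^n,v_h)$, and the temporal jump term coming from $u_h^n-u_h^{n-1}$ (which accounts both for the time stepping and for the change of finite element space $V_h^n\neq V_h^{n-1}$).

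Next I would test the error equation with $v=e$. On the left, $(\partial_t e,e)$ gives $\tfrac12\frac{d}{dt}\|e\|_{L^2(\Omega)}^2$ on the interior of $I_n$, the coercivity estimate \eqref{equation:4.1} via Lemma~\ref{lemma:2.2.11} gives $a(e,e)\geq\gamma\|e\|_{E(\Omega_h^n)}^2$, and $-\kappa(e,e)$ contributes $-\kappa\|e\|_{L^2(\Omega)}^2$, to be absorbed later by Gr\"onwall. On the right, the element- and face-residual terms are handled exactly as in the proof of Lemma~\ref{lemma:3.1}: the interpolation bound $\|v-\Pi v\|_{L^2(T)}\le Ch_T^{\alpha/2}\|v\|_{H^{\alpha/2}(T)}$ together with a trace inequality turns them into $C\,(\eta_{\mathrm{space}}^n)^{1/2}\,\|e\|_{E(\Omega_h^n)}$, and then Young's inequality with a small parameter absorbs $\|e\|_{E(\Omega_h^n)}$ into $\gamma\|e\|_{E(\Omega_h^n)}^2$, leaving $C\eta_{\mathrm{space}}^n$. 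The temporal data oscillation term is bounded by Cauchy--Schwarz and Young as $\tfrac12\|f-\bar{f}^n\|_{L^2(\Omega)}^2+\tfrac12\|e\|_{L^2(\Omega)}^2$, and integrating the first piece over $I_n$ produces $\tau_n\eta_{\mathrm{time1}}^n$.

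I would then integrate over $I_n$ and sum over $n=1,\dots,m$. The $\tfrac12\frac{d}{dt}\|e\|_{L^2}^2$ contributions telescope; since $u_h$ (hence $e$) is only piecewise constant in time and the mesh changes at each $t^n$, the telescoping produces, besides $\|e^m\|_{L^2(\Omega)}^2-\|e^0\|_{L^2(\Omega)}^2$, the temporal-jump contributions at the nodes, which are controlled by $\sum_n\tau_n\eta_{\mathrm{time2}}^n=\sum_n\tau_n\|u_h^n-u_h^{n-1}\|_{L^2(\Omega)}^2$ (this is also where the extra $(u_h^n-u_h^{n-1},v_h)$ term of \eqref{equation:3.2.1.2} enters). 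Collecting everything and using $\|e^0\|_{L^2(\Omega)}=\|u_0-u_h^0\|_{L^2(\Omega)}$ yields an inequality in the sequence $a_n=\|e^n\|_{L^2(\Omega)}^2$ of exactly the shape demanded by the discrete Gr\"onwall inequality (Lemma~\ref{lemma:4.1}); applying it (for $\kappa\,\tau_n$ small) removes the residual $\kappa\|e^n\|_{L^2}^2$ terms and gives the claimed bound, with $C$ depending only on the minimum angles of the meshes through the interpolation and trace constants.

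The main obstacle I anticipate is the bookkeeping of the temporal terms at the time nodes: making the telescoping of $\tfrac12\frac{d}{dt}\|e\|_{L^2}^2$ rigorous when $u_h$ jumps across $t^n$ and the space itself changes from $V_h^{n-1}$ to $V_h^n$, and checking that these node contributions are nonnegative up to the explicit quantity $\tau_n\eta_{\mathrm{time2}}^n$ so they can be moved to the right-hand side. The spatial part is a direct transcription of Lemma~\ref{lemma:3.1}, and the $-\kappa$ term is routine once the Gr\"onwall structure is in place.
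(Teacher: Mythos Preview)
Your proposal is correct and follows essentially the same route as the paper: derive the pointwise-in-time error equation by subtracting \eqref{equation:3.2.1.2} from \eqref{equation:3.2.1.1}, split the right-hand side into $(f-\bar f^n,\cdot)$, the elementwise residual $(R^n,\cdot-\Pi^n\cdot)$, the jump term, and the $(u_h^n-u_h^{n-1},\Pi^n\cdot)$ contribution, then test with $v=e^n$, use coercivity and the argument of Lemma~\ref{lemma:3.1} for the spatial part, Young's inequality for the temporal pieces, integrate over $(t^{n-1},t^n)$, sum, and close with the discrete Gr\"onwall inequality. The paper's write-up is somewhat terser than yours (in particular it does not spell out the node-by-node telescoping you flag as the main obstacle), but the ingredients and the order in which they are used are the same.
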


\begin{proof}
From (\ref{equation:3.2.1.1}) and (\ref{equation:3.2.1.2}), for a.e. $t\in(t^{n-1},t^n]$, and for any $v\in V$, we have
\begin{equation*}
\begin{split}
  (\partial_t(u-&u_h^n),v)+a(u-u_h^n,v)-\kappa(u-u_h^n,v)\\
  &=(f-\bar{f}^n,v)+(R^n,v)-\sum\limits_{e\in\Gamma}[u_h][v]\\
  &=(f-\bar{f}^n,v)+(R^n,v-\Pi^n v)-\sum\limits_{e\in\Gamma}\int_e[u_h][v-\Pi v]+(u_h^n-u_h^{n-1},\Pi^n v).
  \end{split}
\end{equation*}
where $R^n=\bar{f}^n-\frac{u_h^n-u_h^{n-1}}{\tau_n}-\kappa_1\kappa_\alpha D_x^{\alpha,\lambda}u_h^n
-\kappa_2\kappa_\alpha D_y^{\alpha,\lambda}+\kappa u_h^n$ and $\Pi^n$ is the interpolation operator.

Taking $v=u-u_h^n:=e^n$, similar to \eqref{upbd}, we have
\begin{equation*}\label{equation:3.2.1.3}
\begin{split}
   \frac{1}{2}\frac{d}{dt}\|e^n\|_{L^2(\Omega)}^2+\|e^n\|_{E(\Omega_h)}^2
    \leq&~\kappa\|e^n\|_{L^2(\Omega)}^2+(f-\bar{f}^n,e^n)+(R^n,e^n-\Pi^n e^n)\\
        &+\sum\limits_{e\in\Gamma}\int_e|[u_h][e^n-\Pi e^n]|+(u_h^n-u_h^{n-1},\Pi^n e^n)\\
    \leq&~ C\|e^n\|_{L^2(\Omega)}^2+\frac{1}{2}\|f-\bar{f}^n\|_{L^2(\Omega)}^2+\frac{1}{2}\|u_h^n-u_h^{n-1}\|_{L^2(\Omega)}^2\\
    &+C\underset{T\in\Omega_h^n}{\sum} \Big(h_T^\alpha\|R^n\|_{L^2(\Omega)}^2+\|[u_h]\|_{L^2(\partial T)}^2\Big)  +\frac{1}{2}\|e^n\|_{E(\Omega_h)}^2.
    \end{split}
\end{equation*}
Integrating the above formula in time from $t^{n-1}$ to $t^n$ and summing over $n$ from $1$ to $m$, with the discrete Gr\"{o}nwall inequality, we complete the proof.
%\begin{equation*}
%\begin{split}
%  \|e^m\|_{L^2(\Omega)}^2+\sum_{n=1}^m\int_{t^{n-1}}^{t^n}\|e^m\|_{E(\Omega_h)}^2dt
%  \leq&~\|e^0\|_{L^2(\Omega)}^2+\sum_{n=1}^m\int_{t^{n-1}}^{t^n}\|f-\bar{f}^n\|_{L^2(\Omega)}^2dt\\
%  &+\sum_{n=1}^m\int_{t^{n-1}}^{t^n}\|u_h^n-u_h^{n-1}\|_{L^2(\Omega)}^2dt\\
%  &+C\sum_{n=1}^m\int_{t^{n-1}}^{t^n}\underset{T\in\Omega_h^n}{\sum}h_T^\alpha\|R^n\|_{L^2(T)}^2dt.
%    \end{split}
%\end{equation*}
\end{proof}

Next, we prove the lower bound to ensure over-refinement will not occur based on our space error indicator. Let $u_\ast^n\in V$ be the solution of auxiliary problem \cite{Ming1}.
\begin{equation}\label{equation:3.2.1.4}
  \left(\frac{u_\ast^n-u_h^{n-1}}{\tau_n},v\right)+a(u_\ast^n,v)-\kappa(u_\ast^n,v)=(\bar{f}^n,v)\qquad \forall v\in V.
\end{equation}
Note that for fixed time-step size $\tau_n$, by adapting the mesh $\Omega_h^n$, we are essentially controlling the error between $u_h^n$ and $u_\ast^n$, not between $u_h^n$ and the exact solution $u$. Based on this observation, we have the following analyses.
\begin{theorem}[lower bound]
  There exist constants $C_2, C_3>0$ depending only on the minimum angle of $\Omega_h^n$ such that for any $T\in\Omega_h^n$, the following estimate holds:
  \begin{equation}\label{equation:3.2.1.5}
    \eta_T^n \leq C_2\mathrm{osc}^n(T)^2+C_3\hat{C}_n\left(\frac{1}{\tau_n}\|u_\ast^n-u_h^n\|_{L^2(T)}^2+\|u_\ast^n-u_h^n\|_{E(T)}^2\right),
  \end{equation}
  where $\hat{C}_n=\underset{T\in\Omega_h^n}{\max}(h_T^\alpha/\tau_n)$ and $\mathrm{osc}^n(T)^2=h_T^{\alpha}\|Q^nR^n-R^n\|_{L^2(T)}^2$.
\end{theorem}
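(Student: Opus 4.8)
The plan is to mirror the steady-state efficiency estimate of Lemma~\ref{lemma:3.2}; the one genuinely new ingredient is the zeroth-order term $\tfrac1{\tau_n}(\cdot,\cdot)$ coming from the backward Euler step, and tracking it is exactly what produces the weight $\hat C_n$. First I would write $e_\ast^n:=u_\ast^n-u_h^n$ and subtract the identity defining $R^n$ (in terms of $u_h^n$, obtained by element-wise integration by parts so that the strong tempered operator tested against $v$ becomes $a(u_h^n,v)-\sum_{e\in\Gamma}\int_e[u_h^n][v]$, exactly as in deriving \eqref{ResEq}) from the auxiliary problem \eqref{equation:3.2.1.4}; all terms involving $u_h^{n-1}$ and $\bar f^n$ cancel, and one is left with
\[
  \tfrac1{\tau_n}(e_\ast^n,v)+a(e_\ast^n,v)-\kappa(e_\ast^n,v)=(R^n,v)-\sum_{e\in\Gamma}\int_e[u_h^n][v]\qquad\forall\,v\in V .
\]
This is \eqref{ResEq} perturbed by the mass terms $\tfrac1{\tau_n}(e_\ast^n,\cdot)-\kappa(e_\ast^n,\cdot)$; the extra term $(u_h^n-u_h^{n-1},v_h)$ of the scheme \eqref{equation:3.2.1.2} plays no role here, since $R^n$ is defined without it.

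For the volume residual I would take $v=\zeta\,Q^nR^n$ with $\zeta$ the interior bubble of $T$, so $v$ is supported in $T$, all jump contributions drop, and $\|v\|_{L^2(T)}\lesssim\|Q^nR^n\|_{L^2(T)}$. The bubble equivalence gives $\|Q^nR^n\|_{L^2(T)}^2\lesssim\int_T\zeta(Q^nR^n)^2=\int_T v\,(Q^nR^n-R^n)+(R^n,v)$, into which I substitute the residual identity. Cauchy--Schwarz, the fractional inverse inequality $\|v\|_{H^{\alpha/2}(T)}\lesssim h_T^{-\alpha/2}\|v\|_{L^2(T)}$ (Appendix~\ref{invse}) and the continuity \eqref{equation:3.6} bound the mass, fractional and $\kappa$ terms by $\bigl(\tfrac1{\tau_n}\|e_\ast^n\|_{L^2(T)}+h_T^{-\alpha/2}\|e_\ast^n\|_{E(T)}+\kappa\|e_\ast^n\|_{L^2(T)}\bigr)\|v\|_{L^2(T)}$. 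Dividing by $\|Q^nR^n\|_{L^2(T)}$, multiplying by $h_T^{\alpha/2}$, using $\|R^n\|_{L^2(T)}\le\|Q^nR^n-R^n\|_{L^2(T)}+\|Q^nR^n\|_{L^2(T)}$ and squaring, the decisive term is $\tfrac{h_T^{\alpha}}{\tau_n^2}\|e_\ast^n\|_{L^2(T)}^2=\tfrac{h_T^{\alpha}}{\tau_n}\cdot\tfrac1{\tau_n}\|e_\ast^n\|_{L^2(T)}^2\le\hat C_n\,\tfrac1{\tau_n}\|e_\ast^n\|_{L^2(T)}^2$, which is where $\hat C_n$ enters; the remaining squared terms are $h_T^{\alpha}\|Q^nR^n-R^n\|_{L^2(T)}^2=\mathrm{osc}^n(T)^2$, $\|e_\ast^n\|_{E(T)}^2$, and $h_T^{\alpha}\|e_\ast^n\|_{L^2(T)}^2\le\hat C_n\,\tfrac1{\tau_n}\|e_\ast^n\|_{L^2(T)}^2$ (for $\tau_n\le1$). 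This yields $h_T^\alpha\|R^n\|_{L^2(T)}^2\lesssim\mathrm{osc}^n(T)^2+\hat C_n\bigl(\tfrac1{\tau_n}\|e_\ast^n\|_{L^2(T)}^2+\|e_\ast^n\|_{E(T)}^2\bigr)$.

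For the jump half of $\eta_T^n$ I would note that $u_\ast^n\in V=H_0^{\alpha/2}(\Omega)$ is single-valued across faces and vanishes on $\partial\Omega$, whence $[u_h^n]=-[e_\ast^n]$ on every face of $T$; since the energy norm \eqref{equation:4.1.1} contains $c_e\|[v]\|_{L^2(\partial T)}^2$, this gives $\|[u_h^n]\|_{L^2(\partial T)}^2\lesssim\|e_\ast^n\|_{E(T)}^2$ directly (alternatively one reruns the edge-bubble argument of Lemma~\ref{lemma:3.2}, the $\tau_n^{-1}$ term being handled exactly as above). Adding the two contributions and relabelling constants gives \eqref{equation:3.2.1.5}.

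The main obstacle, inherited from Lemma~\ref{lemma:3.2}, is the \emph{nonlocality} of the tempered fractional form: even for a test function supported in a single element, $a(e_\ast^n,v)$ still involves $e_\ast^n$ globally and the tails of $_xD_b^{\alpha/2,\lambda}v$ outside $T$, so the localization of this term — and the fractional inverse inequality behind it (Appendix~\ref{invse}), together with the coercivity/continuity Lemmas~\ref{lemma:2.2.11} and \eqref{equation:3.6} — is the delicate step. A second, purely bookkeeping difficulty is matching the powers of $h_T$ and $\tau_n$ on each term so that the right-hand side collapses into the single weight $\hat C_n$, where one repeatedly uses $0<\alpha<1$ (hence $h_T^{1/2}\le h_T^{\alpha/2}$ for small $h$, needed on the edge-bubble route) and $h_T^\alpha/\tau_n\le\hat C_n$.
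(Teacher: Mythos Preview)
Your proposal is correct and follows essentially the same route as the paper: the interior-bubble test $v=\zeta\,Q^nR^n$ against the residual identity derived from the auxiliary problem \eqref{equation:3.2.1.4}, the fractional inverse inequality of Appendix~\ref{invse}, and the bookkeeping $h_T^\alpha/\tau_n\le\hat C_n$ are exactly what the paper does. For the jump term the paper simply writes ``similarly'' (i.e.\ the edge-bubble argument you list as the alternative), while your shortcut $[u_h^n]=-[e_\ast^n]$ using $u_\ast^n\in V$ is a legitimate and slightly cleaner variant.
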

\begin{proof}
  Let $p$ be a node of $\Omega_h^n$ interior to $T$ and $\xi$ be a piecewise linear function with respect to $\Omega_h^n$ that equals $1$ at $p$ and $0$ at all the other nodes. Then $v=\xi Q^nR^n$ belongs to $V_h^n$ and vanishes outside $T$. Thus
  \begin{equation*}\label{equation:3.2.1.6}
    \begin{split}
      \|Q^nR^n\|_{L^2(T)}^2&\leq C\int_T\xi(Q^nR^n)^2dx\\
                           &=C\left(\int_T(Q^nR^n-R^n)v+\int_TR^nvdx\right).
    \end{split}
  \end{equation*}
  Combining with (\ref{equation:3.2.1.4}), we find
  \begin{equation*}\label{equation:3.2.1.7}
  \begin{split}
    \int_TR^nvdx=&\left(\frac{u_\ast^n-u_h^n}{\tau_n},v\right)
             +\kappa_1\kappa_\alpha\big(D_x^{\alpha,\lambda}(u_\ast^n-u_h^n),v\big)\\
             &+\kappa_2\kappa_\alpha\big(D_y^{\alpha,\lambda}(u_\ast^n-u_h^n),v\big)
             -\kappa\big((u_\ast^n-u_h^n),v\big)\\
             \leq&~ C\left(\frac{1}{\tau_n}\|u_\ast^n-u_h^n\|_{L^2(T)}
             +h_T^{-\alpha/2}\|u_\ast^n-u_h^n\|_{E(T)}
             +\|u_\ast^n-u_h^n\|_{L^2(T)}\right)\\&~\cdot\|Q^nR^n\|_{L^2(T)}\\
             \leq&~ C\hat{C}_n^{1/2}h_T^{-\alpha/2}\left(\frac{1}{\tau_n}\|u_\ast^n-u_h^n\|_{L^2(T)}^2+\|u_\ast^n-u_h^n\|_{E(T)}^2\right)^{1/2}
             \cdot\|Q^nR^n\|_{L^2(T)}.
  \end{split}
  \end{equation*}
  Therefore, we have
  \begin{equation*}\label{equation:3.2.1.8}
  \begin{split}
    h_T^{\alpha/2}\|Q^nR^n\|_{L^2(T)}\leq &~Ch_T^{\alpha/2}\|Q^nR^n-R^n\|_{L^2(T)}\\
    &+C\hat{C}_n^{1/2}\left(\frac{1}{\tau_n}\|u_\ast^n-u_h^n\|_{L^2(T)}^2+\|u_\ast^n-u_h^n\|_{E(T)}^2\right)^{1/2}.
  \end{split}
  \end{equation*}
  Thus,
  \begin{equation*}\label{equation:3.2.1.9}
  \begin{split}
    h_T^{\alpha}\|R^n\|_{L^2(T)}^2\leq &~Ch_T^{\alpha}\|Q^nR^n-R^n\|_{L^2(T)}^2\\
    &+C\hat{C}_n\left(\frac{1}{\tau_n}\|u_\ast^n-u_h^n\|_{L^2(T)}^2+\|u_\ast^n-u_h^n\|_{E(T)}^2\right).
    \end{split}
  \end{equation*}
  The term $\|[u_h]\|_{L^2(\partial T)}^2$ can be bounded by the right hand side similarly.

  Combining the above estimates implies (\ref{equation:3.2.1.5}).
\end{proof}

\subsubsection{Numerical experiment}\label{section:3.2.2}
In the last part, we mainly observe the performance of spatial adaptive mesh, and give the specific error of different cases on experiments. Here we present some numerical experiments using the algorithm of evolution equations given above. We focus on observing the evolution of exact solution by taking it with different regularity at different time, which will indicate the effectiveness of the indicator.

\begin{example}\label{exam:4}
  Consider the equation (\ref{equation:3.2.1}) on the domain $\Omega=[0,2]\times[0,2]$, with $\mathbf{b}=\mathbf{0}, \kappa_1=0.1, \kappa_2=0.2, \lambda=0.2$, and $\alpha=\beta=0.8$. The source term $f$ is chosen such that the exact solution writes
  \[u(x,y)=x(x-2)y(y-2) e^{-((x-t)^2+(y-t)^2)/0.005}.\]
\end{example}
For this example, the solution has less regularity near the point $(t,t)$, where we expect to observe more refinement. The adaptation process yields the meshes shown in Figure \ref{fig:4.1}, which is in consistent with the theoretical result completely.
\begin{figure}[!htb]
  \centering
  % Requires \usepackage{graphicx}
  \includegraphics[scale=0.4]{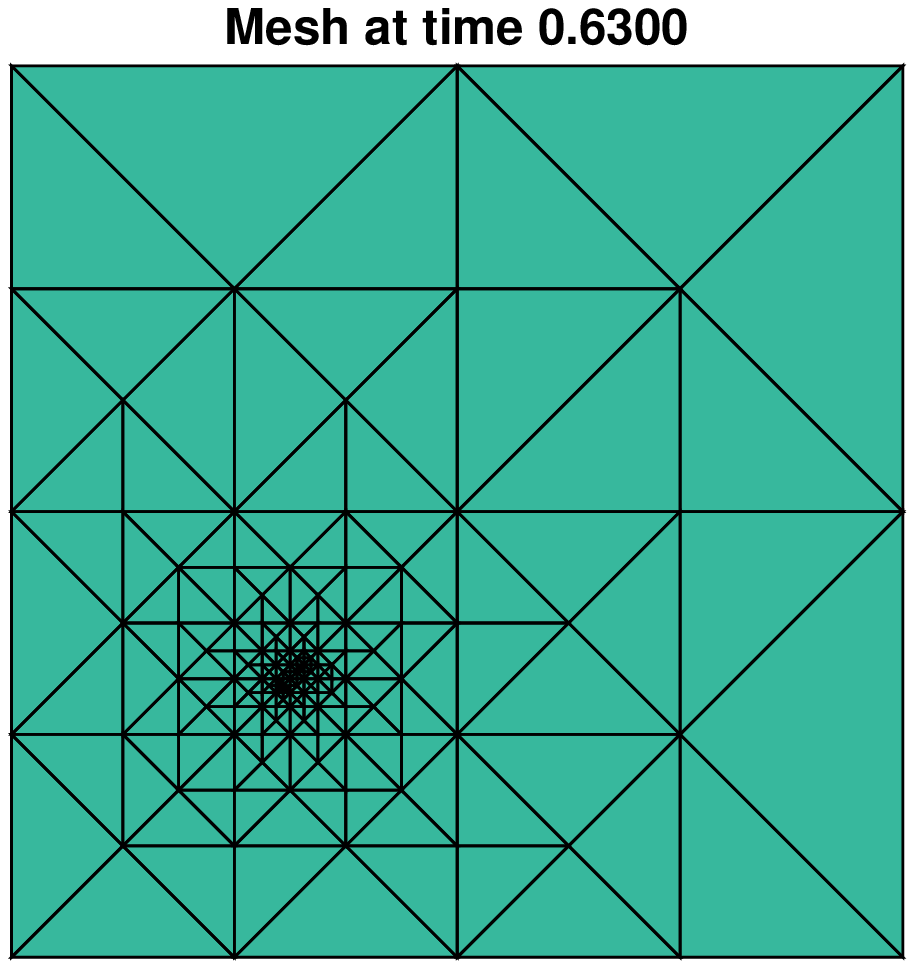}
  \includegraphics[scale=0.4]{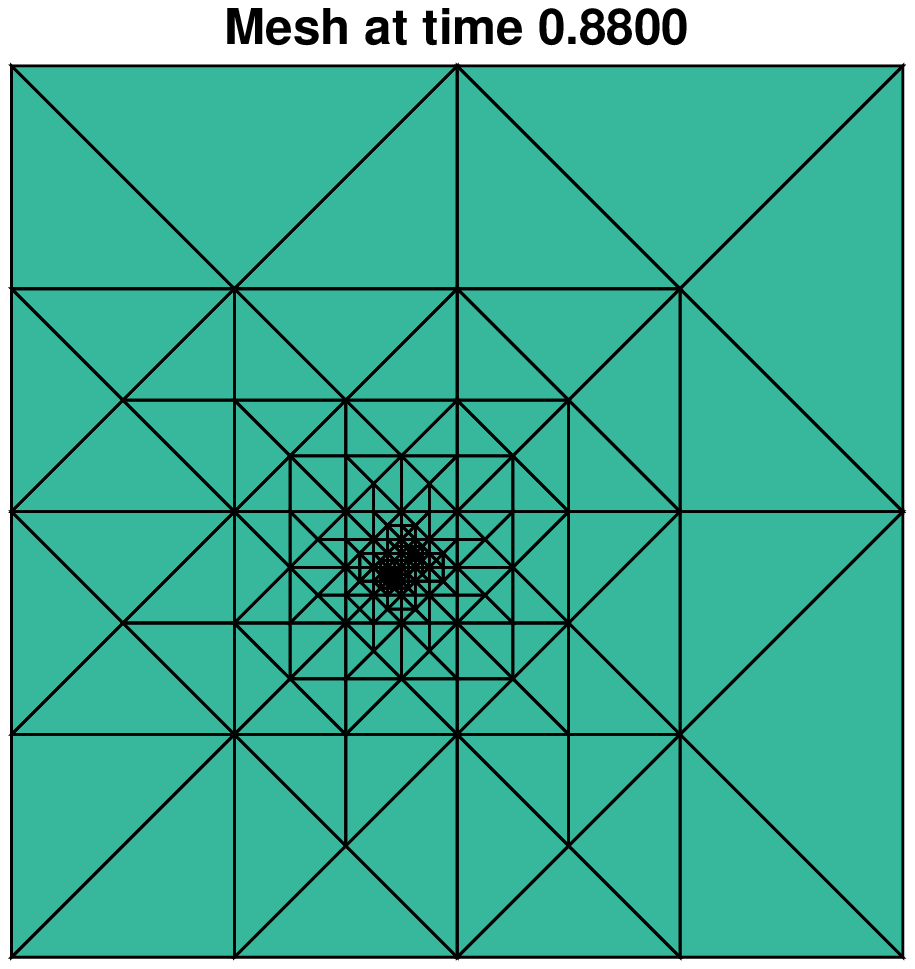} \\
  \includegraphics[scale=0.4]{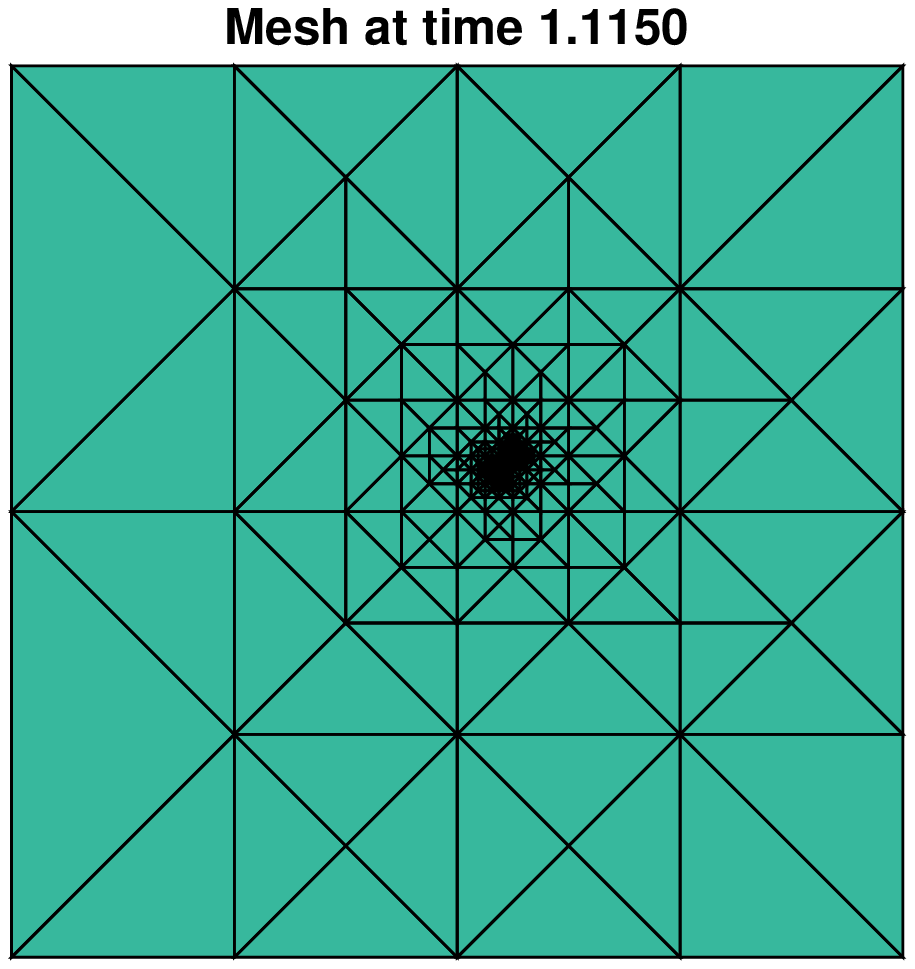}
   \includegraphics[scale=0.4]{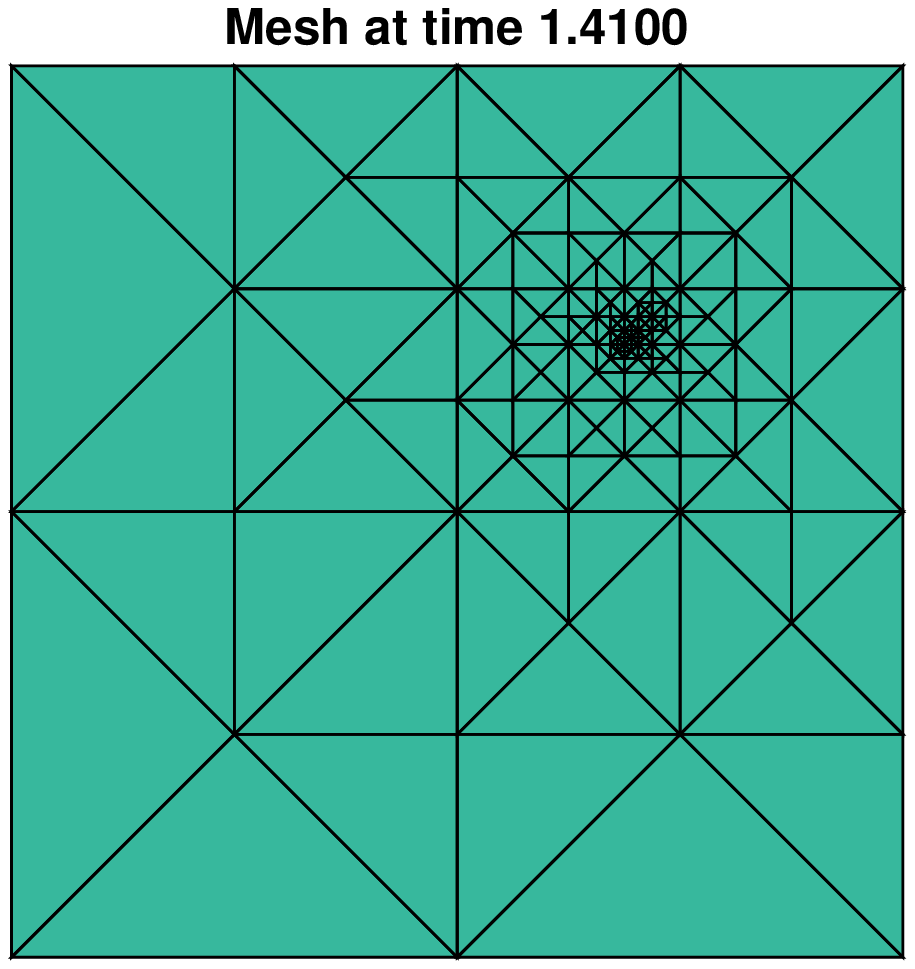}\\
  %\caption{The mesh refinement at time 0.63 (left) and the mesh refinement at time 1.41 (right).}\label{fig:4.1}
  \caption{Mesh refinement at different time.}\label{fig:4.1}
\end{figure}

\section{Conclusion}
We discuss the DG methods for the two dimensional time dependent space tempered fractional convection-diffusion equations, especially their adaptivity; and the provided strategy for designing adaptive schemes works for the general PDEs with fractional operators.
%In this paper, a complete discontinuous Galerkin method of two-dimension time-dependent space tempered fractional convection-diffusion equation is carried out. 
DG methods are superior to finite element method in many ways, but the stability of the discrete schemes needs more attention. The interior boundaries are usually connected by two ways: interior penalty and numerical flux.
 %The commonly used way is interior penalty and numerical flux. 
 We use the former to deal with the diffusion term and the latter to the convection term. The stability and convergence analyses are explicitly provided. The theoretical results are confirmed by numerical experiments. For the adaptivity of the DG methods, we consider two schemes of the stationary problem, i.e., a posteriori error estimator based on the traditional energy norm and another one based on the dual weighted residual. The numerical experiments confirm the advantage of the DWR method. Finally, we consider the fractional evolution problem. A posteriori error estimate is provided and the indicator is designed; its effectiveness is displayed by numerical simulations.
 
 %and the evolution of the exact solution is observed in the experiment.

\section{Acknowledgements}
This work was supported by the National Natural Science Foundation of China under Grant No. 11671182.

\begin{appendix}
\section{The inverse inequation}\label{invse}

\begin{lemma}
  Let $\alpha>0$, $v\in V_k$ and $T\in\Omega_h$. Then there exists a constant $C$ depending only on $\alpha$ and the shape regularity such that $$\|v\|_{H^\alpha(T)}\leq Ch_T^{-\alpha}\|v\|_{L^2(T)},$$ where $h_T$ is the diameter of $T$.
\end{lemma}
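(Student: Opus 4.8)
The plan is to prove the inverse inequality in two stages: first establish the classical polynomial inverse inequality for integer-order Sobolev norms via a scaling argument to the reference element, and then bootstrap to the fractional order $\alpha$ using interpolation of Sobolev spaces (or, equivalently, the Fourier-based definition of $H^\alpha$ together with the equivalence relations recorded after \eqref{Hnorm}). For the first stage, I would fix the reference triangle $\hat T$ and use the affine map $F_T:\hat T\to T$ with $\|DF_T\|\sim h_T$ and $\|DF_T^{-1}\|\sim h_T^{-1}$, which follows from shape regularity. On the finite-dimensional space $P_N(\hat T)$ all norms are equivalent, so $\|\hat v\|_{H^m(\hat T)}\le \hat C\|\hat v\|_{L^2(\hat T)}$ for every integer $m$; pulling this back through $F_T$ and tracking the Jacobian factors gives $\|v\|_{H^m(T)}\le C h_T^{-m}\|v\|_{L^2(T)}$ for integer $m$, with $C$ depending only on $N$, $m$, and the shape-regularity constant.

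For the second stage, write $\alpha=\lfloor\alpha\rfloor+\{\alpha\}$ (or more simply bound $H^\alpha(T)$ between $H^{\lceil\alpha\rceil}(T)$ and $L^2(T)$). By the interpolation-space characterization $H^\alpha=[L^2,H^m]_{\alpha/m}$ for any integer $m>\alpha$, together with the standard interpolation inequality $\|v\|_{H^\alpha(T)}\lesssim \|v\|_{L^2(T)}^{1-\alpha/m}\|v\|_{H^m(T)}^{\alpha/m}$, one inserts the integer-order inverse estimate $\|v\|_{H^m(T)}\le Ch_T^{-m}\|v\|_{L^2(T)}$ and obtains
\[
\|v\|_{H^\alpha(T)}\lesssim \|v\|_{L^2(T)}^{1-\alpha/m}\bigl(h_T^{-m}\|v\|_{L^2(T)}\bigr)^{\alpha/m}
= h_T^{-\alpha}\|v\|_{L^2(T)},
\]
which is the claim. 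Alternatively, and perhaps cleaner given the Fourier-analytic setup of the paper, one scales to $\hat T$, uses norm-equivalence on the finite-dimensional space $P_N(\hat T)$ to bound the $H^\alpha(\hat T)$ seminorm by the $L^2(\hat T)$ norm, and transports back; the only subtlety is that the fractional seminorm is nonlocal, so the scaling on $\Omega$ (via zero extension or the quotient-norm definition of $H^\alpha(T)$) must be handled with a little care, but the homogeneity exponent is still $-\alpha$ by the dilation behavior of $|\omega|^{2\alpha}|\hat v|^2$ under $x\mapsto h_T x$.

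The main obstacle I anticipate is making the scaling argument rigorous for the \emph{fractional} norm, since $H^\alpha(T)$ here is defined by restriction/extension from $H^\alpha(\mathbb R^2)$ and is genuinely nonlocal, so one cannot simply localize to $\hat T$ and count Jacobians as in the integer case. The interpolation-space route sidesteps this entirely by only ever using the integer-order inverse estimate (which scales cleanly) plus an abstract interpolation inequality that holds on $T$ with constants independent of $h_T$ after one reference-element normalization; I would therefore carry out the proof along that route, and relegate the verification that the interpolation inequality has $h_T$-independent constant to a brief scaling remark.
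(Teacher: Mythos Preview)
Your proposal is correct, and your \emph{alternative} route (scale to $\hat T$, use norm equivalence on the finite-dimensional polynomial space, scale back, tracking the homogeneity exponent $-\alpha$) is essentially what the paper does. The paper, however, carries this out not via the Fourier dilation $|\omega|^{2\alpha}\,d\omega$ but directly on the Riemann--Liouville definition: in one dimension with $T=[0,h]$, $\hat T=[0,1]$, and $\hat v(\hat x)=v(h\hat x)$, a change of variables in the fractional integral gives ${}_0D_x^\alpha v(x)=h^{-\alpha}\,{}_0D_{\hat x}^\alpha\hat v(\hat x)$, hence $|v|_{H^\alpha(T)}=h^{1/2-\alpha}|\hat v|_{H^\alpha(\hat T)}$; together with $\|\hat v\|_{L^2(\hat T)}=h^{-1/2}\|v\|_{L^2(T)}$ and norm equivalence on $P_N(\hat T)$, this yields the claim. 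The two-dimensional case is then handled by observing that the paper's $H^\alpha(T)$-norm is a sum of one-directional fractional derivatives, so each factor scales as in the 1D computation.

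Your \emph{primary} route through real interpolation $H^\alpha=[L^2,H^m]_{\alpha/m}$ is a genuine alternative: it imports more machinery but is more robust, since it never touches the fractional operator directly and would apply verbatim to any equivalent fractional norm. The one point to watch---which you flag---is that the equivalence $\|\cdot\|_{H^\alpha(T)}\sim\|\cdot\|_{[L^2(T),H^m(T)]_{\alpha/m}}$ must hold with constants independent of $h_T$; for the paper's Riemann--Liouville-based local norm this is most easily verified by exactly the explicit scaling computation the paper performs, so the two routes ultimately converge. The paper's direct computation is more elementary and self-contained; your interpolation argument is cleaner if one is willing to take the interpolation characterization of $H^\alpha$ (with uniform constants) as known.
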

\begin{proof}
  First, we consider the one dimensional case, i.e., $T=[0,h]$. Let $\hat{T}=[0,1]$ be the reference triangle. If $v$ is a function defined on $T$, then $\hat{v}$ is defined on $\hat{T}$ by $\hat{v}(\hat{x})=v(h\hat{x})~~\forall\hat{x}\in\hat{T}$.

  For $v$ vanishing outside of $T$, there exists
  \begin{equation}\label{inv1d}
    |\hat{v}|_{H^\alpha(\hat{T})}=h^{\alpha-1/2}|v|_{H^\alpha(T)}.
  \end{equation}
  Actually, taking $x=hy,~\xi=hs$
  \begin{equation*}
    \begin{split}
      {}_0I_x^\alpha v(x)
      &=\frac{1}{\Gamma(\alpha)} \int_0^x (x-\xi)^{\alpha-1}v(\xi)d\xi\\
      &=\frac{h^\alpha}{\Gamma(\alpha)} \int_0^y (y-s)^{\alpha-1}v(hs)ds\\
      &=h^\alpha{}_0I_y^\alpha v(hy).
    \end{split}
  \end{equation*}
  Similarly,
  $$ {}_0D_x^\alpha v(x) =h^{-\alpha}{}_0D_y^\alpha v(hy).$$
  Thus, \eqref{inv1d} is implied by
  \begin{equation*}
    \begin{split}
      |v|_{H^\alpha(T)}^2
      &=\int_0^h |{}_0D_x^\alpha v(x)|^2dx\\
      &=\int_0^1 |h^{-\alpha} {_0D_y^\alpha} v(hy)|^2d(hy)\\
      &=h^{1-2\alpha} |\hat{v}|_{H^\alpha(\hat{T})}^2.
    \end{split}
  \end{equation*}
  On the other hand,
  \begin{equation*}
    \|\hat{v}\|_{L^2(\hat{T})}=h^{-1/2}\|v\|_{L^2(T)}.
  \end{equation*}
  Using the equivalence of any two norms on the finite dimensional function space $V_k$,
  \begin{equation}
    |v|_{H^\alpha(T)}=h^{1/2-\alpha}|\hat{v}|_{H^\alpha(\hat{T})}\leq Ch^{1/2-\alpha}\|\hat{v}\|_{L^2(\hat{T})}=Ch^{-\alpha}\|v\|_{L^2(T)}.
  \end{equation}

  Next, we consider the case that $T$ is a triangle.
  Let $\hat{T}$ be a reference triangle, and $F$ be an affine map from $\hat{T}$ onto $T$. If $v$ is a function defined on $T$, then $\hat{v}$ is defined on $\hat{T}$ by $\hat{v}(\hat{x})=v(x)~~\forall F(\hat{x})=x$.

  Note that $$|v|_{H^\alpha(T)}^2=\|_aD_x^\alpha u\|_{L^2(T)}^2+\|_cD_y^\alpha u\|_{L^2(T)}^2,$$ in which two terms can be decomposed $\alpha-$derivative in $x/y$ direction and no-derivative in $y/x$ direction.

  Then inspired by the one dimensional case above, one obtains
  \begin{equation}
    |v|_{H^\alpha(T)}\leq Ch^{1/2-\alpha+1/2}|\hat{v}|_{H^\alpha(\hat{T})}
    \leq Ch^{1/2-\alpha+1/2}\|\hat{v}\|_{L^2(\hat{T})}\leq Ch^{-\alpha}\|v\|_{L^2(T)}.
  \end{equation}

  When $h\leq 1$, we have
  \begin{equation*}
    \|v\|_{H^\alpha(T)}\leq Ch^{-\alpha}\|v\|_{L^2(T)}.
  \end{equation*}
\end{proof}
\end{appendix}

%=========================================================================================
%------------------------------------ Bibliography ---------------------------------------
\addcontentsline{toc}{section}{References}

%=========================================================================================

%    Bibliographies can be prepared with BibTeX using amsplain,
%    amsalpha, or (for "historical" overviews) natbib style.
%\bibliographystyle{amsplain}
%    Insert the bibliography data here.

\end{document}